\documentclass%
[10pt,notitlepage]{article}
\usepackage{amsmath}
\usepackage{amssymb}
\usepackage{amsthm}
\usepackage{amscd}
\usepackage{latexsym}
\usepackage[dvips]{graphicx}
\usepackage{tabls}

\setlength{\oddsidemargin}{-5mm}
\setlength{\textwidth}{170mm}
\setlength{\textheight}{225mm}
\setlength{\voffset}{-2cm}

\newtheorem{theorem}{Theorem}[section]
\newtheorem{proposition}[theorem]{Proposition}

\newtheorem{lemma}[theorem]{Lemma}
\newtheorem{corollary}[theorem]{Corollary}

\newtheorem{definition}[theorem]{Definition}
\newtheorem{remark}[theorem]{Remark}

\def\Im{\operatorname{Im}}
\def\Ker{\operatorname{Ker}}

\def\Aut{\operatorname{Aut}}

\def\br#1{\left\langle#1\right\rangle}
\def\Sign{\operatorname{Sign}}

\def\spin{\operatorname{spin}}
\def\Pin{\operatorname{Pin}}
\def\Arf{\operatorname{Arf}}

\def\Int{\operatorname{Int}}
\def\Diff{\operatorname{Diff}}

\def\exp{\operatorname{exp}}
\def\mod{\operatorname{mod}}
\def\Mod{\operatorname{Mod}}
\def\Map{\operatorname{Map}}

\def\Sp{\operatorname{Sp}}
\def\GL{\operatorname{GL}}
\def\SL{\operatorname{SL}}

\def\Hom{\operatorname{Hom}}
\begin{document}
\title{The abelianization of the level 2 mapping class group}
\author{Masatoshi Sato}
\date{}
\maketitle
\begin{abstract}
In this paper, we determine the abelianization of the level $d$ mapping class group for $d=2$ and odd $d$. We also extend the homomorphism of the Torelli group defined by Heap to a homomorphism of the level 2 mapping class group. 
\end{abstract}
\tableofcontents
\newpage
\section{Introduction}
Let $g\ge1$, $r=0,1$, and $d>0$. We denote by $\Sigma_{g,r}$ a closed oriented connected surface of genus $g$ with $r$ boundary components. We denote by $\Diff_+(\Sigma_{g,r},\partial\Sigma_{g,r})$ the group of orientation-preserving diffeomorphisms of $\Sigma_{g,r}$ which fix the boundary pointwise. The mapping class group of $\Sigma_{g,r}$ is defined by $\mathcal{M}_{g,r}:=\pi_0\Diff_+(\Sigma_{g,r},\partial\Sigma_{g,r})$. Fix the symplectic basis $\{A_i,B_i\}_{i=1}^g$ of the first homology group $H_1(\Sigma_{g,r};\mathbf{Z})$. Then the natural action of $\mathcal{M}_{g,r}$ on this group gives rise to the classical representation $\rho:\mathcal{M}_{g,r}\to \Sp(2g;\mathbf{Z})$ onto the integral symplectic group. The kernel $\mathcal{I}_{g,r}$ of this representation is called the Torelli group.

The level $d$ mapping class group $\mathcal{M}_{g,r}[d]\subset\mathcal{M}_{g,r}$ is defined by the kernel of the $\mod d$ reduction $\mathcal{M}_{g,r}\to \Sp(2g;\mathbf{Z}_d)$ of $\rho$. The level $d$ congruence subgroup $\Gamma_g[d]$ of the symplectic group is defined by the kernel of $\mod d$ reduction map $\Ker(\Sp(2g;\mathbf{Z})\to \Sp(2g;\mathbf{Z}_d))$. This is equal to the image of $\mathcal{M}_{g,r}[d]$ under $\rho$. The group $\mathcal{M}_g[d]$ arises as the orbifold fundamental group of the moduli space of nonsingular curves of genus $g$ with level $d$ structure. In particular, for $d\ge3$, the level $d$ mapping class groups are torsion-free, and the abelianizations of the level $d$ mapping class groups are equal to the first homology groups of the corresponding moduli spaces. 

In this paper, we determine the abelianization of this group $\mathcal{M}_{g,r}[d]$ and $\Gamma_g[d]$, for $d=2$ and odd $d$ when $g\ge3$. This is an analogous result in Satoh\cite{satoh2007aci} and Lee-Szczarba\cite{lee1976hac} for the abelianizations of the level $d$ congruence subgroups of $\Aut F_n$ and $\GL(n;\mathbf{Z})$. To determine the abelianization $H_1(\mathcal{M}_{g,r}[2];\mathbf{Z})$, we construct an injective homomorphism $\beta_{\sigma}:\mathcal{M}_{g,1}[2]\to \Map(H_1(\Sigma_g;\mathbf{Z}_2),\mathbf{Z}_8)$. This function is defined using the Rochlin functions of mapping tori. We will show that this is an extension of a homomorphism of the Torelli group defined by Heap\cite{heap2005bim}. To determine the abelianization $H_1(\mathcal{M}_{g,r}[d];\mathbf{Z})$ for odd $d$, we construct the Johnson homomorphism of modulo $d$ on $\mathcal{M}_{g,r}[d]$.

Historically, McCarthy\cite{mccarthy2000fcg} proved that the first rational homology group of a finite index subgroup of $\mathcal{M}_{g,r}$ which includes the Torelli group vanishes for $r=0$. More generally, Hain\cite{hain28tga} proved that this group vanishes for any $r\ge0$.
\begin{theorem}[McCarthy, Hain]
Let $\mathcal{M}$ be a finite index subgroup of $\mathcal{M}_{g,r}$ that includes the Torelli group, where $g\ge3$, $r\ge0$. Then 
\[
H_1(\mathcal{M};\mathbf{Q})=0.
\]
\end{theorem}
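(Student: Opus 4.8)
The plan is to exploit the group extension
\[
1\to\mathcal{I}_{g,r}\to\mathcal{M}\xrightarrow{\ \rho\ } G\to1,\qquad G:=\rho(\mathcal{M}).
\]
Since $\mathcal{M}$ has finite index in $\mathcal{M}_{g,r}$ and contains $\mathcal{I}_{g,r}=\Ker\rho$, the image $G$ is a finite-index subgroup of $\Sp(2g;\mathbf{Z})$ and $\mathcal{M}/\mathcal{I}_{g,r}\cong G$. The five-term exact sequence in rational homology attached to this extension reads
\[
H_2(G;\mathbf{Q})\to\bigl(H_1(\mathcal{I}_{g,r};\mathbf{Q})\bigr)_{G}\to H_1(\mathcal{M};\mathbf{Q})\to H_1(G;\mathbf{Q})\to0 ,
\]
where $(\,\cdot\,)_G$ denotes $G$-coinvariants. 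Hence it suffices to establish the two vanishing statements $H_1(G;\mathbf{Q})=0$ and $\bigl(H_1(\mathcal{I}_{g,r};\mathbf{Q})\bigr)_G=0$; no control over the $H_2$ term is needed.

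For the quotient term I would invoke that $\Sp(2g;\mathbf{Z})$ enjoys Kazhdan's property (T) for $g\ge2$, a higher-rank phenomenon (the real rank of $\Sp(2g;\mathbf{R})$ is $g$). Property (T) is inherited by finite-index subgroups and forces finite abelianization, so $H_1(G;\mathbf{Q})=0$.

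For the coinvariant term the key input is Johnson's computation of the Torelli abelianization: for $g\ge3$ there are $\Sp$-equivariant isomorphisms $H_1(\mathcal{I}_{g,1};\mathbf{Q})\cong\wedge^3 H_{\mathbf{Q}}$ and $H_1(\mathcal{I}_{g};\mathbf{Q})\cong\wedge^3 H_{\mathbf{Q}}/H_{\mathbf{Q}}$, where $H_{\mathbf{Q}}=H_1(\Sigma_g;\mathbf{Q})$ is the standard symplectic representation and the conjugation action of $\mathcal{M}$ on $H_1(\mathcal{I}_{g,r};\mathbf{Q})$ factors through $\rho$ (inner automorphisms act trivially on the abelianization). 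As an $\Sp(2g)$-module, $\wedge^3 H_{\mathbf{Q}}$ splits for $g\ge3$ into the contraction image $H_{\mathbf{Q}}$ and the primitive summand $V_{\langle 1^3\rangle}$, both nontrivial irreducibles; the closed-surface module $\wedge^3 H_{\mathbf{Q}}/H_{\mathbf{Q}}\cong V_{\langle 1^3\rangle}$ is likewise nontrivial and irreducible. Because $G$ is a lattice in the semisimple group $\Sp(2g;\mathbf{R})$, which has no compact factors, Borel's density theorem makes $G$ Zariski dense in $\Sp(2g)$, so the $G$-invariants of any finite-dimensional rational representation coincide with its $\Sp(2g)$-invariants; by complete reducibility the $G$-coinvariants agree with the $G$-invariants. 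Since no irreducible constituent above is the trivial representation, these invariants vanish, giving $\bigl(H_1(\mathcal{I}_{g,r};\mathbf{Q})\bigr)_G=0$. Feeding both vanishings into the five-term sequence yields $H_1(\mathcal{M};\mathbf{Q})=0$.

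The spectral-sequence formalism here is routine; the genuine content, and the main obstacle, lies in the two deep external inputs—Johnson's determination of $H_1(\mathcal{I}_{g,r};\mathbf{Q})$ and the property (T) vanishing for arithmetic symplectic groups—together with the representation-theoretic step of checking that the relevant constituents of $\wedge^3 H_{\mathbf{Q}}$ carry no trivial summand and that Zariski density plus semisimplicity reduce the $G$-coinvariants to algebraic-group invariants. I expect the representation-theoretic bookkeeping, rather than the homological algebra, to be where care is required.
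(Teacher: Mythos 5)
The paper itself gives no proof of this theorem: it is quoted from McCarthy (the case $r=0$) and Hain (the general case), so there is no internal argument to compare against. Your proposal is correct, and it is in essence the standard proof underlying those references. The five-term sequence for $1\to\mathcal{I}_{g,r}\to\mathcal{M}\to G\to1$ is set up correctly; $H_1(G;\mathbf{Q})=0$ follows as you say from property (T) of $\Sp(2g;\mathbf{Z})$ for $g\ge2$ (alternatively, one could use the Bass--Milnor--Serre congruence subgroup property together with the finiteness of $H_1(\Gamma_g[d];\mathbf{Z})$ computed in Section 2 of this paper, avoiding property (T) altogether); and the coinvariants step is sound: the conjugation action on $H_1(\mathcal{I}_{g,r};\mathbf{Q})$ does factor through $G$, Borel density makes the finite-index subgroup $G\le\Sp(2g;\mathbf{Z})$ Zariski dense, so $G$-submodules of the algebraic modules $\Lambda^3H_{\mathbf{Q}}$ and $\Lambda^3H_{\mathbf{Q}}/H_{\mathbf{Q}}$ coincide with $\Sp$-submodules, complete reducibility in characteristic zero identifies coinvariants with invariants, and neither $H_{\mathbf{Q}}$ nor the primitive summand $V_{\langle1^3\rangle}$ is the trivial representation, so the coinvariants vanish. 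Two minor remarks: your use of Johnson's theorem is exactly where the hypothesis $g\ge3$ enters (for $g=2$ the Torelli abelianization is not even finitely generated, and the statement genuinely requires $g\ge3$); and although the statement as printed says $r\ge0$, Johnson's computation as you invoke it covers only $r=0,1$ --- this matches the paper's standing convention, but Hain's full result for more boundary components or punctures would need an additional reduction (e.g., via Birman exact sequences) that your write-up does not address.
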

Farb raised the problem to compute the abelianization, that is the first integral homology group, of the group $\mathcal{M}_{g,r}[d]$ in Farb\cite{farb2006spm} Problem 5.23 p.43. 
Recently, Putman\cite{putman2008alm} also determined the abelianization of the level $d$ congruence subgroup of the symplectic group and the level $d$ mapping class group for odd $d$ when $g\ge3$. See also \cite{sato2007smg}.

This paper is organized as follows. In section 2, we determine the commutator subgroup of the level $d$ congruence subgroup of the symplectic group for every integer $d\ge2$. This is mainly relies on the work of Mennicke\cite{mennicke1965tsm} and Bass-Milnor-Serre\cite{bass1967scs} on congruence subgroups of the symplectic group. We also obtain the abelianization of $\Gamma_g[d]$ (Corollary \ref{abel-symp}). Let $\spin(M)$ be the set of spin structures of an oriented manifold $M$ with trivial second Stiefel-Whitney class. In section 3, we will construct the injective homomorphism $\beta_{\sigma}:\mathcal{M}_{g,1}[2]\to \Map(H_1(\Sigma_g;\mathbf{Z}_2),\mathbf{Z}_8)$ for $\sigma\in\spin(\Sigma_g)$. We will determine the abelianization of the level 2 mapping class group using this homomorphism. Let $n$ be a positive integer. Denote the Rochlin function by $R(M,\ ):\spin(M)\to \mathbf{Z}_{16}$ for a $4n-1$-manifold $M$. For $\sigma\in\spin(M)$, $R(M,\sigma)$ is defined as the signature of a compact $4n$-manifold which spin bounds $(M,\sigma)$. See for example Turaev\cite{turaev1984crl}. We will define the homomorphism $\beta_{\sigma}(\varphi)$ using the difference $R(M,\sigma)-R(M,\sigma')$ for a mapping torus $M=M_\varphi$ of $\varphi\in\mathcal{M}_{g,1}[2]$. Turaev\cite{turaev1984crl} proved that it can be written as the pin$^{-}$ bordism class of a surface embedded in the mapping torus. We can compute $\beta_{\sigma}$ by examining this pin$^{-}$ bordism class. 

The main theorem in this paper proved in Section \ref{proof main theorem} is illustrated as follows. For $\{x_i\}_{i=1}^n\subset H_1(\Sigma_{g,1};\mathbf{Z}_2)$, define $I:H_1(\Sigma_{g,1};\mathbf{Z}_2)^n\to \mathbf{Z}_2$ by
\[
I(x_1,x_2,\cdots,x_n):=\sum_{1\le i<j\le n}(x_i\cdot x_j) \mod2,
\]
where $x_i\cdot x_j$ is the intersection number of $x_i$ with $x_j$. We denote by $\mathbf{Z}_8[H_1(\Sigma_{g,1};\mathbf{Z}_2)]$ the free $\mathbf{Z}_8$-module generated by all formal symbol $[X]$ for $X\in H_1(\Sigma_{g,1})$. Define $\Delta_0^n: H_1(\Sigma_{g,1};\mathbf{Z})^n \to\mathbf{Z}_8[H_1(\Sigma_{g,1};\mathbf{Z}_2)]$ by
\begin{multline*}
\Delta_0^n(x_1,x_2,\cdots,x_n)=\sum_{i=1}^n [x_i]+\sum_{1\le i<j\le n}(-1)^{I(x_i,x_j)}[x_i+x_j]+\sum_{1\le i<j<k\le n}(-1)^{I(x_i,x_j,x_k)}[x_i+x_j+x_k]\\
+\cdots+(-1)^{I(x_1,x_2,\cdots,x_n)}[x_1+x_2+x_3+\cdots+x_n]\in\mathbf{Z}_8[H_1(\Sigma_{g,1};\mathbf{Z}_2)].
\end{multline*}
\begin{theorem}\label{abel-mcg}
Let $g\ge3$. Denote by $L_{g,1}\subset\mathbf{Z}_8[H_1(\Sigma_{g,1};\mathbf{Z}_2)]$ the submodule generated by
\[
[0],\,4\Delta_0^2(x_1,x_2),\,2\Delta_0^3(x_1,x_2,x_3),\,\Delta_0^n(x_1,x_2,\cdots,x_n)\in\mathbf{Z}_8[H_1(\Sigma_{g,1};\mathbf{Z}_2)],
\]
for $n\ge3$ and $\{x_i\}_{i=1}^n\subset H_1(\Sigma_{g,1};\mathbf{Z}_2)$. Then, we have 
\[
\mathbf{Z}_8[H_1(\Sigma_{g,1};\mathbf{Z}_2)]/L_{g,1}\cong H_1(\mathcal{M}_{g,1}[2];\mathbf{Z}),
\]
as an $\mathcal{M}_{g,1}$-module.
\end{theorem}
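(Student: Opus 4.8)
The plan is to pin down $H_1(\mathcal{M}_{g,1}[2];\mathbf{Z})$ from two sides that meet: a surjection \emph{onto} it built from squares of Dehn twists, and the invariant $\beta_{\sigma}$ of Section~3 that detects enough of it. First I would fix $\sigma\in\spin(\Sigma_g)$ and pass from $\beta_{\sigma}\colon\mathcal{M}_{g,1}[2]\to\Map(H_1(\Sigma_g;\mathbf{Z}_2),\mathbf{Z}_8)$ to the induced map $\bar\beta_{\sigma}\colon H_1(\mathcal{M}_{g,1}[2];\mathbf{Z})\to\Map(H_1(\Sigma_g;\mathbf{Z}_2),\mathbf{Z}_8)$ on abelianizations. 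By the theorem of McCarthy and Hain, $H_1(\mathcal{M}_{g,1}[2];\mathbf{Q})=0$, so $H_1(\mathcal{M}_{g,1}[2];\mathbf{Z})$ is a \emph{finite} abelian group; this is what makes the torsion-valued invariant $\beta_{\sigma}$ an honest candidate for an isomorphism rather than a partial one, and I will use finiteness throughout.

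Next I would construct a surjection $\bar\Phi\colon\mathbf{Z}_8[H_1(\Sigma_{g,1};\mathbf{Z}_2)]/L_{g,1}\twoheadrightarrow H_1(\mathcal{M}_{g,1}[2];\mathbf{Z})$. The input is a generating set of $\mathcal{M}_{g,1}[2]$ by squares $t_c^{2}$ of Dehn twists along nonseparating curves together with twists along separating curves, so that the classes $[t_c^{2}]$ generate the target. I would send the free generator $[X]$ to $[t_c^{2}]$ for any $c$ with $[c]\equiv X\bmod 2$. Two verifications are needed: that $[t_c^{2}]$ depends only on the mod~$2$ class $X$, and that the generators of $L_{g,1}$ die. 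For the first I would use that conjugation by a level~$2$ mapping class is inner on $\mathcal{M}_{g,1}[2]$, hence trivial on $H_1$, so $t_c^{2}$ and $t_{c'}^{2}$ are homologous whenever some $\psi\in\mathcal{M}_{g,1}[2]$ carries $c$ to $c'$; the change-of-coordinates principle then has to be arranged within level~$2$, which is a lemma to prove. The vanishing of the generators $[0]$, $4\Delta_0^2$, $2\Delta_0^3$ and $\Delta_0^n$ ($n\ge3$) in $H_1$ I would derive from relations among twist-squares in $\mathcal{M}_{g,1}[2]$: the separating-twist class gives $[0]=0$, while the lantern and chain relations, reduced to homology, should yield the $\Delta_0^n$ relations, the coefficients $4$ and $2$ reflecting the precise orders of the $n=2,3$ classes.

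For injectivity of $\bar\Phi$ I would compute $\beta_{\sigma}$ explicitly on products of twist-squares. Using Turaev's description of the Rochlin-invariant difference $R(M_\varphi,\sigma)-R(M_\varphi,\sigma')$ as a $\Pin^{-}$ bordism class of a surface embedded in the mapping torus, I expect that, after identifying $\Map(H_1(\Sigma_g;\mathbf{Z}_2),\mathbf{Z}_8)$ with $\mathbf{Z}_8[H_1(\Sigma_{g,1};\mathbf{Z}_2)]$ through the intersection pairing, $\beta_{\sigma}(t_{c_1}^{2}\cdots t_{c_n}^{2})$ equals $\Delta_0^n([c_1],\dots,[c_n])$: the signs $(-1)^{I}$ encode the behaviour of the $\mathbf{Z}_4$-quadratic refinement of the intersection form under adding classes, and the whole expression is the Gauss-sum form of the associated $\Arf$/Brown invariant. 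Granting this, $\bar\beta_{\sigma}\circ\bar\Phi$ is the tautological map of $\mathbf{Z}_8[H_1(\Sigma_{g,1};\mathbf{Z}_2)]/L_{g,1}$ onto its image and is injective, which forces $\bar\Phi$ injective, hence an isomorphism. Since both $\bar\Phi$ and $\beta_{\sigma}$ are $\Sp(2g;\mathbf{Z}_2)$-equivariant for the permutation action on the basis $\{[X]\}$, the isomorphism is one of $\mathcal{M}_{g,1}$-modules, and equivariance also reduces the relation-checking to a few orbit representatives.

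The main obstacle I anticipate is exactly this computation of $\beta_{\sigma}$ on products of twist-squares and its matching with $\Delta_0^n$. It is a delicate $\Pin^{-}$ bordism and Gauss-sum calculation that must be controlled with enough precision to show that $L_{g,1}$, \emph{with the specific coefficients $4$, $2$ and $1$}, is neither too large (so that $\bar\Phi$ is defined) nor too small (so that $\bar\beta_{\sigma}\circ\bar\Phi$ is injective). A secondary difficulty is securing the level~$2$ generation and the within-level-$2$ change-of-coordinates lemma needed for well-definedness of $\bar\Phi$; once these are in place, the finiteness from the theorem of McCarthy and Hain turns the squeeze between the surjection $\bar\Phi$ and the injectivity detected by $\beta_{\sigma}$ into the desired isomorphism.
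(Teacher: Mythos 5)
Your skeleton --- a surjection $\Phi$ built from squares of Dehn twists (with well-definedness via a change-of-coordinates move \emph{inside} the level 2 group, which is the paper's Lemma \ref{welldefined of phi}), plus the detector $\beta_\sigma$ evaluated on twist squares (Proposition \ref{value of beta}) --- is the paper's, but you are missing the half of the argument the paper cannot do without: an a priori \emph{upper bound} on $H_1(\mathcal{M}_{g,1}[2];\mathbf{Z})$. You propose to kill the generators of $L_{g,1}$ in $H_1$ directly from lantern and chain relations. The paper never does this, and this is the crux: the inclusion $L_{g,1}\subseteq\Ker\Phi$ is obtained there as a \emph{consequence} of the injectivity of $\beta_\sigma$ on $H_1(\mathcal{M}_{g,1}[2];\mathbf{Z})$ (Lemma \ref{beta-Delta} only gives $L_{g,1}\subseteq\Ker(\beta_\sigma\Phi)$), and that injectivity is itself proved by squeezing orders: the lower bound from the explicit image of $\Psi\beta_\sigma$, and the upper bound from Johnson's theorem $H_1(\mathcal{I}_{g,r};\mathbf{Z})_{\mathcal{M}_{g,r}[2]}\cong B^3_{g,r}$ (Theorem \ref{Johnson}), the computation $H_1(\Gamma_g[2];\mathbf{Z})\cong\mathbf{Z}_2^{2g^2-g}\oplus\mathbf{Z}_4^{2g}$ of Section \ref{section:abel-symp}, and Lemma \ref{kernel iota} ($1\in\Ker\iota$). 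Chain-relation computations do appear, but only inside Lemma \ref{kernel iota}, and only for low-weight identities such as $\iota(\overline{A}_1\overline{B}_1)=2\left\langle A_1\right\rangle+2\left\langle B_1\right\rangle+2\left\langle A_1+B_1\right\rangle$; you offer no mechanism for deriving, say, $\Phi(\Delta_0^4)=0$ (a signed sum of fifteen twist-square classes) from lantern/chain relations, and no such derivation is indicated anywhere --- this is precisely the hard step your "should yield" elides. Relatedly, McCarthy--Hain finiteness does not give you the exponent: the fact that $H_1(\mathcal{M}_{g,1}[2];\mathbf{Z})$ is a $\mathbf{Z}_8$-module, which you need merely to have $\bar\Phi$ defined on $\mathbf{Z}_8[H_1(\Sigma_{g,1};\mathbf{Z}_2)]$, is proved in the paper from Johnson's theorem (the coinvariants are a $\mathbf{Z}_2$-module) together with the $\mathbf{Z}_4$-module structure of $H_1(\Gamma_g[2];\mathbf{Z})$ --- again the upper-bound machinery you omit.

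Two further slips. First, your formula $\beta_\sigma(t_{c_1}^2\cdots t_{c_n}^2)=\Delta_0^n([c_1],\dots,[c_n])$ is a type mismatch and false as stated: $\beta_\sigma$ is a homomorphism, so its value on a product is just the sum $\sum_j(-1)^{q_\sigma([c_j])}i_{[c_j]}$, whereas $\Delta_0^n$ lives in the source $\mathbf{Z}_8[H_1(\Sigma_{g,1};\mathbf{Z}_2)]$; the correct statement is Lemma \ref{beta-Delta}, $\beta_\sigma\Phi(\Delta_\sigma^n(x_1,\dots,x_n))(x)=2^{n-1}\prod_{j}i_{x_j}(x)$, where the factor $2^{n-1}$ comes from the alternating-sum structure of $\Delta^n$, not from multiplying twists. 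Second, injectivity of the induced algebraic map $\mathbf{Z}_8[H_1(\Sigma_{g,1};\mathbf{Z}_2)]/L_{g,1}\to\Map(H_1(\Sigma_g;\mathbf{Z}_2),\mathbf{Z}_8)$ is not "tautological": the functions $i_X$ are far from a basis of the target, and the paper proves what is needed by evaluating on classes of weight at most $3$ (the map $\Psi$) and matching the resulting image with the order of $\mathbf{Z}_8[H_1(\Sigma_{g,1};\mathbf{Z}_2)]/L_{g,1}$. (Also, $\Phi([0]):=0$ is a convention, not a consequence of separating twists dying: by Lemma \ref{kernel iota} the genus-1 separating twist has the nonzero class $2\left\langle A_1\right\rangle+2\left\langle B_1\right\rangle+2\left\langle A_1+B_1\right\rangle$.) In short, your two-sided squeeze is the right picture, but as proposed it lacks the Johnson--Birman--Craggs upper bound and replaces it with an unproven relation-theoretic claim, so the argument does not close.
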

We also determine the abelianization of the level 2 mapping class group of a closed surface in subsection \ref{closed}. In section \ref{oddlevel}, we determine the abelianization of the level $d$ mapping class group for odd $d$. The main tool is the Johnson homomorphism of modulo $d$ on the level $d$ mapping class group. This derives from the extension of the Johnson homomorphism defined by Kawazumi\cite{kawazumi2005cam}. For $H:=H_1(\Sigma_{g,r};\mathbf{Z})$, denote by $\Lambda^3H/H$ the cokernel of the homomophism 
\[
\begin{array}{ccc}
H&\to&\Lambda^3H\\
x&\mapsto&\sum_{i=1}^g(A_i\wedge B_i)\wedge x.
\end{array}
\]
Then, the abelianization of the level $d$ mapping class group is written as: 
\begin{theorem}\label{abel-oddlevel}
For $g\ge3$ and odd integer $d\ge3$, 
\begin{align*}
H_1(\mathcal{M}_g[d];\mathbf{Z})&=(\Lambda^3H/H\otimes\mathbf{Z}_d)\oplus H_1(\Gamma_g[d];\mathbf{Z})\\
&=\mathbf{Z}_d^{(4g^3-g)/3},\\
H_1(\mathcal{M}_{g,1}[d];\mathbf{Z})&=(\Lambda^3H\otimes\mathbf{Z}_d)\oplus H_1(\Gamma_g[d];\mathbf{Z})\\
&=\mathbf{Z}_d^{(4g^3+5g)/3}.
\end{align*}
\end{theorem}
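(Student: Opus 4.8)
The plan is to compute $H_1(\mathcal{M}_{g,r}[d];\mathbf{Z})$ from the group extension
\[
1\to\mathcal{I}_{g,r}\to\mathcal{M}_{g,r}[d]\xrightarrow{\ \rho\ }\Gamma_g[d]\to1,
\]
exploiting two inputs: the mod $d$ Johnson homomorphism and the fact that the target is forced to be finite. Since $\mathcal{M}_{g,r}[d]$ is a finite-index subgroup of $\mathcal{M}_{g,r}$ containing $\mathcal{I}_{g,r}$, the theorem of McCarthy and Hain gives $H_1(\mathcal{M}_{g,r}[d];\mathbf{Q})=0$, so $H_1(\mathcal{M}_{g,r}[d];\mathbf{Z})$ is a finite abelian group. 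Consequently it suffices to produce an explicit surjection onto the claimed group $(\Lambda^3H\otimes\mathbf{Z}_d)\oplus H_1(\Gamma_g[d])$ and then to bound the order of the source from above by the order of the target, since a surjection between finite abelian groups of equal order is an isomorphism.

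First I would construct the surjection. Because every $\varphi\in\mathcal{M}_{g,1}[d]$ acts trivially on $H\otimes\mathbf{Z}_d$, the mod $d$ reduction of Kawazumi's extended (crossed) Johnson homomorphism restricts on $\mathcal{M}_{g,1}[d]$ to an honest homomorphism $\tau_d:\mathcal{M}_{g,1}[d]\to\Lambda^3H\otimes\mathbf{Z}_d$ whose restriction to $\mathcal{I}_{g,1}$ is the reduction of the classical Johnson homomorphism; since the latter is surjective (Johnson), so is $\tau_d$. For the closed surface I would descend through the Birman exact sequence to land in $(\Lambda^3H/H)\otimes\mathbf{Z}_d$, which accounts for the extra quotient by $H$. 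Pairing $\tau_d$ with the composite $\bar\rho:\mathcal{M}_{g,r}[d]\to\Gamma_g[d]\to H_1(\Gamma_g[d])$ yields $\Phi=(\tau_d,\bar\rho)$; as $\Phi$ is already onto the first factor on $\mathcal{I}_{g,r}$ and $\bar\rho$ is onto the second, $\Phi$ is surjective. Here I use Corollary~\ref{abel-symp} to identify $H_1(\Gamma_g[d])\cong\mathbf{Z}_d^{g(2g+1)}$, and the elementary count $\dim_{\mathbf{Z}_d}\Lambda^3H+g(2g+1)=(4g^3+5g)/3$ (respectively with $\Lambda^3H/H$ in the closed case) then matches the asserted ranks.

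For the upper bound I would feed the extension into the five-term exact sequence
\[
H_2(\Gamma_g[d])\xrightarrow{\ \partial\ }\bigl(H_1(\mathcal{I}_{g,r})\bigr)_{\Gamma_g[d]}\to H_1(\mathcal{M}_{g,r}[d])\to H_1(\Gamma_g[d])\to0
\]
and compute the coinvariants of Johnson's abelianization $H_1(\mathcal{I}_{g,r})\cong\Lambda^3H\oplus B$ (closed case $\Lambda^3H/H\oplus B$), where $B$ is the $2$-torsion Birman--Craggs summand. Since $\gamma\equiv1\pmod d$ for $\gamma\in\Gamma_g[d]$, one has $\gamma x-x\in d\,\Lambda^3H$, and using that $\mathfrak{sp}(2g;\mathbf{Z})\cdot\Lambda^3H=\Lambda^3H$ (which holds because the primitive part and the $H$-summand are nontrivial irreducible symplectic modules) one obtains the integral identity $(\Lambda^3H)_{\Gamma_g[d]}\cong\Lambda^3H\otimes\mathbf{Z}_d$. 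Thus $H_1(\mathcal{M}_{g,r}[d])$ is squeezed: the surjection $\Phi$ bounds its order below, while the five-term sequence bounds it above by $d^{\,\dim\Lambda^3H}\cdot\lvert(B)_{\Gamma_g[d]}\rvert\cdot\lvert H_1(\Gamma_g[d])\rvert$. Comparing the two bounds, the odd-primary parts match exactly and $\partial$ is forced to vanish there, so the entire theorem reduces to showing that the $2$-group $(B)_{\Gamma_g[d]}$ is annihilated in $H_1(\mathcal{M}_{g,r}[d])$.

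The main obstacle is therefore the $2$-primary part. At the prime $2$ we have $H_1(\Gamma_g[d])\otimes\mathbf{Z}_{(2)}=0$ and, as $d$ is odd, $(\Lambda^3H)_{\Gamma_g[d]}\otimes\mathbf{Z}_{(2)}=(\Lambda^3H\otimes\mathbf{Z}_d)\otimes\mathbf{Z}_{(2)}=0$; hence the $2$-part of $H_1(\mathcal{M}_{g,r}[d])$ equals $\operatorname{coker}\bigl(\partial:H_2(\Gamma_g[d])_{(2)}\to(B)_{\Gamma_g[d]}\bigr)$. Because $d$ is odd the reduction $\Gamma_g[d]\to\Sp(2g;\mathbf{Z}_2)$ is surjective, so $(B)_{\Gamma_g[d]}=(B)_{\Sp(2g;\mathbf{Z}_2)}$, and I must show this coinvariant module either vanishes or is swept out by the connecting map $\partial$, so that no $2$-torsion survives. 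I expect this analysis of the Birman--Craggs summand and of $\partial$ to be the genuinely hard step; once it is in place the squeeze closes and $\Phi$ is an isomorphism of the stated form and rank.
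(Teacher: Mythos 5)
There is a genuine gap, and you have located it yourself: your argument reduces the theorem to showing that the Birman--Craggs coinvariants $(B)_{\Gamma_g[d]}$ die in $H_1(\mathcal{M}_{g,r}[d];\mathbf{Z})$, and you leave that step open, hoping either that the coinvariants vanish or that the connecting map $\partial:H_2(\Gamma_g[d];\mathbf{Z})\to (H_1(\mathcal{I}_{g,r};\mathbf{Z}))_{\Gamma_g[d]}$ sweeps them out. Neither option is available as stated: since $d$ is odd, $\Gamma_g[d]$ surjects onto $\Sp(2g;\mathbf{Z}_2)$, but the $\Sp(2g;\mathbf{Z}_2)$-coinvariants of the Birman--Craggs summand do \emph{not} vanish (for instance the class of the constant $1\in B^3_{g,r}$ is invariant and survives), so your squeeze would force you to actually compute the image of $\partial$, i.e.\ to get control of $H_2(\Gamma_g[d];\mathbf{Z})$ at the prime $2$ --- a computation at least as hard as the theorem and not sketched in your proposal. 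Two smaller points: your claim that $\tau_d$ lands in $\Lambda^3H\otimes\mathbf{Z}_d$ on all of $\mathcal{M}_{g,1}[d]$ (not just on $\mathcal{I}_{g,1}$) also needs an argument --- the paper gets it from Mennicke's theorem, which gives generation of $\mathcal{M}_{g,1}[d]$ by $\mathcal{I}_{g,1}$ and $d$-th powers of twists, together with $\tau_d(t_C^d)=\tfrac{d(d-1)}{2}[C]^{\otimes3}=0$ for odd $d$; and your integral identity $(\Lambda^3H)_{\Gamma_g[d]}\cong\Lambda^3H\otimes\mathbf{Z}_d$ is true but should be proved by the exact transvection computation $T_y^d(x)-x=d\,\sigma_y(x)$ on $\Lambda^3H$, not by appealing to irreducibility, which is only a statement over a field.

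The paper closes exactly your gap by a group-theoretic input you did not invoke: Johnson's conjugacy relation (\cite{johnson1980crs}, Lemma 11) shows that $(t_Ct_{C'}^{-1})^d\in[\mathcal{M}_{g,r}[d],\mathcal{I}_{g,r}]$ for every bounding pair $(C,C')$, and since $\mathcal{I}_{g,r}$ is generated by bounding pair maps for $g\ge3$ (\cite{johnson1979hsa}), the image of $H_1(\mathcal{I}_{g,r};\mathbf{Z})$ in $H_1(\mathcal{M}_{g,r}[d];\mathbf{Z})$ is annihilated by $d$ and hence factors through $H_1(\mathcal{I}_{g,r};\mathbf{Z})\otimes\mathbf{Z}_d$ (the paper's Lemma \ref{moddred}). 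For odd $d$ this kills the $2$-torsion summand $B$ outright, with no analysis of $H_2(\Gamma_g[d])$ or of $\partial$. After that, no finiteness (McCarthy--Hain) and no order counting are needed either: by Johnson's Theorems 3 and 6 the composite
\[
H_1(\mathcal{I}_{g,r};\mathbf{Z})\otimes\mathbf{Z}_d\to H_1(\mathcal{M}_{g,r}[d];\mathbf{Z})\xrightarrow{\ \tau_d\ }\Lambda^3H\otimes\mathbf{Z}_d\ \ (\text{resp. }\Lambda^3H/H\otimes\mathbf{Z}_d)
\]
is an isomorphism, so $\tau_d$ retracts the five-term sequence, which is therefore split short exact, giving the direct sum decomposition directly; the ranks then follow from Corollary \ref{abel-symp} as in your count. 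So your surjection $\Phi=(\tau_d,\bar\rho)$ and rank bookkeeping agree with the paper, but the squeeze strategy cannot be completed without replacing the open $2$-primary step by Johnson's relation (or an equivalent fact), which is the actual crux of the proof.
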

\newpage
\section{The abelianization of the level $d$ congruence subgroup of symplectic group}\label{section:abel-symp}
In this section, we determine the abelianization of the level $d$ congruence subgroup $\Gamma_g[d]$ of the symplectic group $\Sp(2g;\mathbf{Z})$. We denote the identity matrix by $I$. A matrix $A\in\Gamma_g[d]$ can be written as $A=I+dA'$ with an integral $2g\times 2g$ matrix $A'$. Denote the matrix
\[
A'=
\begin{pmatrix}
p(A)&q(A)\\
r(A)&s(A)
\end{pmatrix},
\]
where $p(A),q(A),r(A)$, and $s(A)$ are $g\times g$ matrices. We also denote the $(i,j)$-element of a matrix $u$ by $u_{ij}$. For an even integer $d$, define the subgroup $\Gamma_g[d,2d]$ of the symplectic group by
\[
\Gamma_g[d,2d]:=\{A\in\Gamma_g[d]\,|\, q_{ii}(A)=r_{ii}(A)\equiv0 \mod 2 \text{ for } i=1,2\cdots,g\}.
\]
This subgroup was proved to be the normal subgroup of $\Sp(2g;\mathbf{Z})$ in Igusa\cite{igusa1964grt} Lemma 1.(i). 
 
We will prove in this section:
\begin{proposition}\label{commutator}
Let $g\ge2$. For an odd integer $d$, 
\[
\Gamma_g[d^2]=[\Gamma_g[d],\Gamma_g[d]].
\]
For an even integer $d$, 
\[
\Gamma_g[d^2,2d^2]=[\Gamma_g[d],\Gamma_g[d]].
\]
\end{proposition}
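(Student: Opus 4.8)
The plan is to prove the two inclusions separately. Throughout write an element of $\Gamma_g[d]$ as $A=I+dA'$, so that $A\mapsto A'\bmod d$ identifies $\Gamma_g[d]/\Gamma_g[d^2]$ with the additive group of the symplectic Lie algebra $\mathfrak{sp}(2g;\mathbf{Z}_d)$; the symplectic condition forces $q(A),r(A)$ to be symmetric and $s(A)\equiv-p(A)^T\pmod d$. For $[\Gamma_g[d],\Gamma_g[d]]\subseteq\Gamma_g[d^2]$ with $d$ odd this already suffices, the target being abelian. For even $d$ I would sharpen it to $\Gamma_g[d^2,2d^2]$: writing a commutator as $I+d^2C$, a direct expansion gives $C\equiv[A',B']\pmod2$ since $d$ is even, and a short computation shows that the diagonal entries of the $q$- and $r$-blocks of the Lie bracket $[A',B']$ of two elements of $\mathfrak{sp}(2g;\mathbf{Z})$ are always even; indeed the $i$-th diagonal entry of the $q$-block of $[A',B']$ reduces to $2\sum_k p(A)_{ik}q(B)_{ki}-2\sum_k q(A)_{ik}p(B)_{ik}$ after using symmetry of the $q$-blocks. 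As $\Gamma_g[d^2,2d^2]$ is a group by Igusa's lemma, this handles products of commutators as well.

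For the reverse inclusion the key structural point is that $[\Gamma_g[d],\Gamma_g[d]]$ is normal in $\Sp(2g;\mathbf{Z})$, being the commutator subgroup of a normal subgroup. I would then invoke the work of Mennicke and Bass--Milnor--Serre: for $g\ge2$ the congruence subgroup $\Gamma_g[d^2]$, and $\Gamma_g[d^2,2d^2]$ for even $d$, is the normal closure in $\Sp(2g;\mathbf{Z})$ of the elementary symplectic matrices of the appropriate level. By normality it therefore suffices to exhibit each standard elementary generator as a product of commutators of level $d$ matrices.

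These I would produce from the Chevalley commutator relations of the root system $C_g$ with both arguments taken at level $d$. Writing $x_\alpha(t)$ for the root elements, a direct matrix computation gives the exact identities $[x_{e_i-e_j}(s),x_{e_j+e_k}(t)]=x_{e_i+e_k}(st)$ and $[x_{e_i-e_j}(s),x_{2e_j}(t)]=x_{e_i+e_j}(st)\,x_{2e_i}(s^2t)$, and crucially $[x_{e_i-e_j}(s),x_{e_i+e_j}(t)]=x_{2e_i}(2st)$ for the long root. The factor $2$ in the last identity is the heart of the matter and the source of the odd/even dichotomy: commutators of level $d$ elementary matrices reach the diagonal generator $x_{2e_i}$ only with coefficient in $2d^2\mathbf{Z}$, together with $s^2t\in d^3\mathbf{Z}$ from the mixed relation. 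For even $d$ this is exactly the parity condition defining $\Gamma_g[d^2,2d^2]$, so nothing further is needed. For odd $d$ the obstruction disappears because $2$ is invertible modulo $d$: taking $s=d$ and $t=d\cdot\tfrac{d+1}{2}\in d\mathbf{Z}$ yields $x_{2e_i}(d^2+d^3)$, and since $\gcd(2d^2,d^3)=d^2$ this realizes the diagonal generator $x_{2e_i}(d^2)$ modulo the next congruence level. The off-diagonal and $\GL$-type generators follow by the same bookkeeping, and the long roots of $C_2$ already make two symplectic pairs enough, so $g\ge2$ suffices.

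Finally I would promote these leading-order statements to exact ones by induction on the congruence level: the same identities applied to $[\Gamma_g[d],\Gamma_g[d^k]]$ show that $\Gamma_g[d^k]\subseteq[\Gamma_g[d],\Gamma_g[d]]\cdot\Gamma_g[d^{k+1}]$ for every $k\ge2$ (onto the full graded piece for odd $d$, onto the even-long-root sublattice for even $d$), which clears the higher-order corrections and, with the first inclusion, yields the claimed equalities. I expect the main obstacle to be exactly the parity bookkeeping forced by the factor $2$ in the long-root structure constants, and, in the even case, identifying the subgroup generated by the parity-restricted elementary matrices with Igusa's normal subgroup $\Gamma_g[d^2,2d^2]$.
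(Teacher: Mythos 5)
Your overall architecture is in fact the same as the paper's: everything rests on the normality of $[\Gamma_g[d],\Gamma_g[d]]$ in $\Sp(2g;\mathbf{Z})$ together with Mennicke's normal-closure theorem (Theorem \ref{Mennicke}), and the real work is to exhibit certain low-level elements as exact products of commutators of level $d$ matrices. Several of your ingredients are sound: your direct verification that the diagonal entries of the $q$- and $r$-blocks of a bracket of two elements of the symplectic Lie algebra are even is correct and is a legitimate substitute for the citation of Igusa's Lemma 1.(ii) in the forward inclusion; and the long-root relation $[x_{e_i-e_j}(s),x_{e_i+e_j}(t)]=x_{2e_i}(\pm2st)$ is exact, so it really does put $T_{A_1}^{2d^2}$ in $[\Gamma_g[d],\Gamma_g[d]]$ on the nose, whence $\Gamma_g[2d^2]\subseteq[\Gamma_g[d],\Gamma_g[d]]$ by Mennicke. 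But there are two genuine gaps. First, your closing induction $\Gamma_g[d^k]\subseteq[\Gamma_g[d],\Gamma_g[d]]\cdot\Gamma_g[d^{k+1}]$ for all $k\ge2$ proves only that the commutator subgroup is \emph{dense} in $\Gamma_g[d^2]$ in the congruence topology; the descent never terminates, so it cannot by itself ``clear the higher-order corrections.'' The correct mechanism is to land one exact element and let Mennicke absorb everything of deeper level. For odd $d$ the element you need exactly is $T_{A_1}^{d^2}$, and your route to it --- isolating the odd coefficient $s^2t=d^3$ from the mixed relation $[x_{e_i-e_j}(s),x_{2e_j}(t)]=x_{e_i+e_j}(st)\,x_{2e_i}(s^2t)$ --- requires the companion factor $x_{e_i+e_j}(d^2)$ to be exactly a product of commutators. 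That works for $g\ge3$ via $[x_{e_i-e_k}(d),x_{e_k+e_j}(d)]=x_{e_i+e_j}(d^2)$, but in $C_2$ every commutator relation producing $x_{e_1+e_2}$ drags along a long-root tail with odd coefficient, and for odd $d$ that tail is \emph{not} absorbed by $\Gamma_g[2d^2]$. Moreover your showcase example is vacuous: with $s=d$, $t=d(d+1)/2$ one gets $2st=d^2(1+d)$, and $1+d$ is even for odd $d$, so $x_{2e_i}(d^2+d^3)$ already lies in the subgroup generated by $T_{A_1}^{2d^2}$. So your claim that ``$g\ge2$ suffices'' is unproved precisely in the borderline genus the proposition asserts. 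The paper avoids this with a different exact identity, its Lemma \ref{matrix} and the resulting relation $[T_{x+y}^d]+[T_{x-y}^d]=2[T_x^d]+2[T_y^d]$, whose summation trick yields $d[T_{A_1}^d]=0$ in $H_1(\Gamma_g[d];\mathbf{Z})$ for all $g\ge2$.

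Second, in the even case you cite to ``Mennicke and Bass--Milnor--Serre'' the statement that $\Gamma_g[d^2,2d^2]$ is the normal closure in $\Sp(2g;\mathbf{Z})$ of the parity-restricted elementary matrices. No such statement is in those references; they give the normal-closure characterization only for the ordinary congruence subgroups (this is exactly Theorem \ref{Mennicke}). The identification you want is the substantive remaining content of the even case, and the paper proves it by hand: it shows $\Gamma_g[d^2,2d^2]/\Gamma_g[2d^2]\cong\mathbf{Z}_2^{2g^2-g}$ via an explicit homomorphism, shows this quotient is generated as an $\Sp(2g;\mathbf{Z})$-module by the two elements $I+d^2(e_{1\,g+2}+e_{2\,g+1})$ and $I+d^2(e_{1\,1}-e_{g+1\,g+1})$, and realizes each as a commutator using $ABA^{-1}B^{-1}\equiv I+d^2(A'B'-B'A')\bmod d^3$, noting $d^3\in2d^2\mathbf{Z}$ for even $d$. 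You flag this identification yourself as an ``expected obstacle'' --- rightly so, since it is precisely the step your proposal leaves unproved. If you supply that module computation and replace the non-terminating induction by the exact-element-plus-Mennicke mechanism (restricting the odd case to $g\ge3$ or finding a genus-2 substitute), your root-system calculus and the paper's transvection calculus become two dialects of essentially the same proof.
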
 
Before proving Proposition \ref{commutator}, we calculate the abelianization of the congruence subgroup $\Gamma_g[d]$ using this proposition. First, we compute the module $\Gamma_g[d]/\Gamma_g[d^2]\cong H_1(\Gamma_g[d];\mathbf{Z})$ when $d$ is an odd integer. For $A:=I+dA', B=I+dB'\in \Gamma_g[d]$, we have
\begin{equation}\label{mod d^2 homo}
AB=I+d(A'+B')\ \mod d^2.
\end{equation}
Hence, we can define the surjective homomorphism $m:\Gamma_g[d]\to \mathbf{Z}_d^{2g^2+g}$ by
\[
m(A):=(\{p_{ij}(A)\}_{1\le i\le g, 1\le j\le g}, \{q_{ij}(A)\}_{1\le i\le j\le g}, \{r_{ij}(A)\}_{1\le i\le j\le g})\ \mod d.
\]
This is the restriction of the homomorphism of the level $d$ congruence subgroup of $\SL(2g;\mathbf{Z})$ defined by Lee and Szczarba\cite{lee1976hac}. From the symplectic condition, we obtain $p(A)+{}^ts(A)\equiv0, q(A)\equiv {}^tq(A)$, and $r(A)\equiv {}^tr(A)$ $\mod d$. Then, we have the exact sequence
\begin{equation}\label{exact seq1}
\begin{CD}
1@>>>\Gamma_g[d^2]@>>>\Gamma_g[d]@>m>>\mathbf{Z}_d^{2g^2+g}@>>> 1.
\end{CD}
\end{equation}
This shows that $H_1(\Gamma_g[d];\mathbf{Z})\cong\mathbf{Z}_d^{2g^2+g}$.

Next, we consider the case when $d$ is even. We compute the group $\Gamma_g[d]/\Gamma_g[d^2,2d^2]\cong H_1(\Gamma_g[d];\mathbf{Z})$. By the exact sequence (\ref{exact seq1}), we have another exact sequence
\begin{equation}\label{exact seq2}
\begin{CD}
0@>>>\dfrac{\Gamma_g[d^2]}{\Gamma_g[d^2,2d^2]}@>>>\dfrac{\Gamma_g[d]}{\Gamma_g[d^2,2d^2]}@>m>>\mathbf{Z}_d^{2g^2+g}@>>>0.
\end{CD}
\end{equation}
For a matrix
\[
A=I+d^2
\begin{pmatrix}
p(A)&q(A)\\
r(A)&s(A)
\end{pmatrix}
\in\Gamma_g[d^2],
\]
define the surjective homomorphism $m'_1:\Gamma_g[d^2]\to \mathbf{Z}_2^{2g}$ by
\[
m'_1(A):=(\{q_{ii}(A)\}_{i=1}^g, \{r_{ii}(A)\}_{i=1}^g) \mod2.
\]
The kernel is equal to $\Gamma_g[d^2,2d^2]$. Hence, this induces the isomorphism $\Gamma_g[d^2]/\Gamma_g[d^2,2d^2]\cong \mathbf{Z}_2^{2g}$. The exact sequence (\ref{exact seq2}) is consequently wrriten as \begin{equation}\label{exact seq3}
\begin{CD}
0@>>>\mathbf{Z}_2^{2g}@>>>\dfrac{\Gamma_g[d]}{\Gamma_g[d^2,2d^2]}@>m>>\mathbf{Z}_d^{2g^2+g}@>>>0.
\end{CD}
\end{equation}
For a homology class $y\in H_1(\Sigma_{g,r};\mathbf{Z})$, define the transvection $T_y\in \Sp(2g;\mathbf{Z})$ by $T_y(x):=x+(y\cdot x)y$. Then by the exact sequence (\ref{exact seq3}), we see that $\Ker m$ is generated by the elements $T_{A_i}^{d^2}, T_{B_i}^{d^2}$, where $i=1,2,\cdots,g$. Since $q_{ii}(T_{A_i}^d)=1$ and $r_{ii}(T_{B_i}^d)=1$, the order of $T_{A_i}^d, T_{B_i}^d\in \Gamma_g[d]/\Gamma_g[d^2,2d^2]$ are $2d$. Hence we have:
\begin{corollary}\label{abel-symp}
For $g\ge2$,
\[
H_1(\Gamma_g[d];\mathbf{Z})=
\begin{cases}
\mathbf{Z}_d^{2g^2+g} &\text{ if } d \text{ is odd,}\\
\mathbf{Z}_d^{2g^2-g}\oplus \mathbf{Z}_{2d}^{2g} &\text{ if } d \text{ is even.}
\end{cases}
\]
\end{corollary}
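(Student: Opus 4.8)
The plan is to read both cases off directly from Proposition \ref{commutator} together with the exact sequences already assembled above. By Proposition \ref{commutator}, the commutator subgroup $[\Gamma_g[d],\Gamma_g[d]]$ equals $\Gamma_g[d^2]$ for odd $d$ and $\Gamma_g[d^2,2d^2]$ for even $d$, so in each case $H_1(\Gamma_g[d];\mathbf{Z})$ is exactly the quotient appearing on the right of the relevant exact sequence. For odd $d$ this is immediate: the exact sequence (\ref{exact seq1}) identifies $\Gamma_g[d]/\Gamma_g[d^2]$ with $\mathbf{Z}_d^{2g^2+g}$, and nothing further is required.

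For even $d$ I would analyze the extension of abelian groups (\ref{exact seq3}),
\[
0\to \mathbf{Z}_2^{2g}\to G\xrightarrow{\ m\ }\mathbf{Z}_d^{2g^2+g}\to 0,\qquad G:=\Gamma_g[d]/\Gamma_g[d^2,2d^2],
\]
by choosing explicit lifts of a basis of $\mathbf{Z}_d^{2g^2+g}$ and computing their orders in $G$. The $2g^2+g$ coordinate generators split into the $2g$ diagonal ones $q_{ii},r_{ii}$ and the remaining $2g^2-g$ off-diagonal ones. For an off-diagonal generator I would take a lift of the form $I+dN$ with $N^2=0$ and vanishing $q$- and $r$-diagonal; then $(I+dN)^d=I+d^2N$ lies in $\Gamma_g[d^2,2d^2]$, so the lift has order exactly $d$ in $G$. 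For a diagonal generator the natural lift is a transvection power $T_{A_i}^{d}$ or $T_{B_i}^{d}$; writing $T_{A_i}^{d}=I+dN_i$ with $N_i(x)=(A_i\cdot x)A_i$, one has $N_i^2=0$ and $q_{ii}(N_i)=1$, so $T_{A_i}^{d^2}=I+d^2N_i$ maps to the nonzero element of the corresponding $\mathbf{Z}_2$ factor under $m'_1$, while $T_{A_i}^{2d^2}\in\Gamma_g[d^2,2d^2]$. Thus $T_{A_i}^{d}$ and $T_{B_i}^{d}$ have order $2d$, and each absorbs the matching $\mathbf{Z}_2$ summand of $\Ker m$ into a single cyclic $\mathbf{Z}_{2d}$ factor.

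Assembling these lifts then yields the decomposition $G\cong \mathbf{Z}_d^{2g^2-g}\oplus \mathbf{Z}_{2d}^{2g}$. The step I expect to require the most care is verifying that this is genuinely a \emph{direct} sum: I must check that the chosen cyclic subgroups are independent and jointly generate $G$, equivalently that the $2g$ classes $T_{A_i}^{d^2},T_{B_i}^{d^2}$ span $\Ker m\cong\mathbf{Z}_2^{2g}$ and that the extension (\ref{exact seq3}) splits along every off-diagonal direction. Since $\Ker m$ is detected precisely by $m'_1$, and the values $m'_1(T_{A_i}^{d^2}),m'_1(T_{B_i}^{d^2})$ are exactly the standard basis vectors of $\mathbf{Z}_2^{2g}$, this independence follows; the count $(2g^2-g)+2g$ matching $2g^2+g$ then confirms that the listed cyclic factors exhaust $G$, completing the identification.
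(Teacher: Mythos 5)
Your proposal follows essentially the same route as the paper: the odd case is read directly from the exact sequence (\ref{exact seq1}), and the even case from (\ref{exact seq3}) together with the observations that $T_{A_i}^{d^2},T_{B_i}^{d^2}$ generate $\Ker m\cong\mathbf{Z}_2^{2g}$ (detected by $m'_1$) and that $T_{A_i}^{d},T_{B_i}^{d}$ consequently have order $2d$ in $G=\Gamma_g[d]/\Gamma_g[d^2,2d^2]$. The paper is terser and leaves the final assembly implicit; your surjection from $\mathbf{Z}_d^{2g^2-g}\oplus\mathbf{Z}_{2d}^{2g}$ onto $G$ combined with the order count $d^{2g^2-g}(2d)^{2g}=2^{2g}d^{2g^2+g}=|G|$ is the correct way to make that step precise (note it is this order count, not merely the count of generators, that clinches exhaustion).

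One detail in your even-case lifts is wrong as literally stated: among the $2g^2-g$ remaining coordinates are the $g$ diagonal entries $p_{ii}$, and for these there is no symplectic lift $I+dN$ with $N^2=0$ and vanishing $q$- and $r$-diagonal. Indeed, writing $N=\begin{pmatrix}a&b\\c&-{}^ta\end{pmatrix}$ with $b,c$ symmetric, a lift of the $p_{ii}$ basis vector must have $a\equiv e_{ii}$ and $b\equiv c\equiv0\pmod d$, so the top-left block of $N^2$ is $a^2+bc\equiv e_{ii}\not\equiv0\pmod d$. The fix is immediate: take the natural lift $N=e_{i\,i}-e_{g+i\,g+i}$; although $N^2\neq0$, every power $N^k$ is supported in the $p$- and $s$-blocks, so $(I+dN)^d=I+d^2\bigl(N+\tbinom{d}{2}N^2+\cdots\bigr)$ has identically vanishing $q$- and $r$-blocks and hence lies in $\Gamma_g[d^2,2d^2]$, while $(I+dN)^k\equiv I+kdN\pmod{d^2}$ shows the order in $G$ is exactly $d$. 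With this patch your argument is complete and agrees with the paper's.
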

\subsection{Proof of Proposition \ref{commutator}}
In this subsection, we prove Proposition \ref{commutator}. 

By the equation (\ref{mod d^2 homo}), we have
\[
[\Gamma_g[d],\Gamma_g[d]]\subset \Gamma_g[d^2]
\]
for every $d\ge2$. In particular, if $d$ is even, it is shown that
\[
[\Gamma_g[d],\Gamma_g[d]]\subset\Gamma_g[d^2,2d^2]
\]
in Igusa\cite{igusa1964grt} Lemma 1.(ii). Hence, it suffices to prove 
\begin{gather*}
\Gamma_g[d^2]\subset[\Gamma_g[d],\Gamma_g[d]], \text{ for }d \text{ odd, and}\\
\Gamma_g[d^2,2d^2]\subset[\Gamma_g[d],\Gamma_g[d]], \text{ for }d \text{ even}.
\end{gather*}
First, we show that $2d[T_{A_1}^d]=0$ for every $d$. A straightforward computation shows the following lemma.
\begin{lemma}\label{matrix}
For $g\ge2$ and $d\ge1$,
\[
T_{a_1A_1+b_1B_1+a_2A_2}^d=(T_{A_2}^d)^{(a_1b_1+1)a_2^2}(T_{B_1+A_2}^dT_{A_2}^{-d}t_{B_1}^{-d})^{b_1a_2}(T_{A_1+A_2}^dt_{A_1}^{-d}T_{A_2}^{-d})^{a_1a_2}T_{a_1A_1+b_1B_1}^d.
\]
\end{lemma}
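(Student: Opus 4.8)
The plan is to verify the identity by evaluating both sides on an arbitrary class in $H_1(\Sigma_{g,1};\mathbf{Z})$, using that an element of $\Sp(2g;\mathbf{Z})$ is determined by its action on homology. The basic input is a closed formula for a power of a transvection: since $y\cdot y=0$, a one-line induction on $d$ gives $T_y^d(x)=x+d(y\cdot x)y$ for every $y$ and every $d\ge1$. Applying this to $y=a_1A_1+b_1B_1+a_2A_2$ makes the left-hand side explicit, with $y\cdot x=a_1(A_1\cdot x)+b_1(B_1\cdot x)+a_2(A_2\cdot x)$.

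For the right-hand side I would first treat the three building-block factors. Writing $w=a_1A_1+b_1B_1$, the final factor is $T_w^d$, and the two middle factors have the form $T_{u+v}^dT_u^{-d}T_v^{-d}$ with $\{u,v\}=\{A_1,A_2\}$ and $\{B_1,A_2\}$; in both cases $u\cdot v=0$. For such orthogonal $u,v$ I would record that $T_{u+v}^dT_u^{-d}T_v^{-d}$ acts by $x\mapsto x+d(u\cdot x)v+d(v\cdot x)u$, and that a short induction (using orthogonality, so that the relevant pairings are preserved at each step) gives its $m$-th power as $x\mapsto x+md(u\cdot x)v+md(v\cdot x)u$. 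Each right-hand factor is thereby reduced to an explicit affine map of $H_1$ that is easy to iterate.

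It then remains to compose the four factors on a general $x$ and match the result with $x+d(y\cdot x)y$. Every transvection in sight is along one of $A_1,B_1,A_2,A_1+A_2,B_1+A_2$, so the image depends on $x$ only through $A_1\cdot x$, $B_1\cdot x$, $A_2\cdot x$, and it suffices to track the coefficients of $A_1$, $B_1$, $A_2$ accumulated after each factor. The coefficients of $A_1$ and $B_1$ receive only first-order contributions, which assemble directly into $da_1(y\cdot x)$ and $db_1(y\cdot x)$. The main obstacle is the $A_2$-coefficient: because the factors do not commute, their composition produces a second-order ($d^2$) cross term there, arising from the interaction of the two mixed factors. The leading power of $T_{A_2}^d$ contributes only to the $A_2$-coefficient and is chosen exactly to cancel this $d^2$-residue, bringing the coefficient to $da_2(y\cdot x)$. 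Keeping careful track of the signs and of the $d^2$-terms in these pairings is the only delicate point; once it is done, both sides agree on every $x$, which proves the identity.
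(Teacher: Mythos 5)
Your strategy is the right one, and it is essentially the ``straightforward computation'' the paper itself invokes without writing out: all factors are explicit elements of $\Sp(2g;\mathbf{Z})$, so one evaluates both sides on an arbitrary $x$, and your preliminary reductions are correct --- $T_y^d(x)=x+d(y\cdot x)y$, and for $u\cdot v=0$ the block $T_{u+v}^dT_u^{-d}T_v^{-d}$ sends $x\mapsto x+d(u\cdot x)v+d(v\cdot x)u$, its $m$-th power scaling the correction by $m$ (indeed the three transvections in each block commute). The gap is precisely at the one step you did not execute: the assertion that the prefactor $(T_{A_2}^d)^{(a_1b_1+1)a_2^2}$ ``is chosen exactly to cancel'' the second-order residue. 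It is not, and it cannot be, because that residue is genuinely quadratic in $d$ while the prefactor contributes $d(a_1b_1+1)a_2^2(A_2\cdot x)$, which is linear in $d$. Set $p:=A_1\cdot x$, $q:=B_1\cdot x$, $r:=A_2\cdot x$, $\mu:=a_1p+b_1q$, and compose with the rightmost factor first. After $T^d_{a_1A_1+b_1B_1}$ the $B_1$-pairing is $q-d\mu a_1$; the block $(T^d_{A_1+A_2}T^{-d}_{A_1}T^{-d}_{A_2})^{a_1a_2}$ adds $a_1a_2dr$ in the $A_1$-direction, shifting the $B_1$-pairing further by $-a_1a_2dr$; the block $(T^d_{B_1+A_2}T^{-d}_{A_2}T^{-d}_{B_1})^{b_1a_2}$ therefore deposits $-a_1b_1a_2^2d^2r$ into the $A_2$-coefficient (the two $d^2\mu$ cross terms do cancel, as you anticipated; the $A_1$- and $B_1$-coefficients come out right). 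The total $A_2$-coefficient of the right-hand side is $da_2\mu+d(a_1b_1+1)a_2^2r-a_1b_1a_2^2d^2r$, whereas the left-hand side has $da_2\mu+da_2^2r$; the difference $a_1b_1a_2^2d(1-d)r$ vanishes only when $d=1$ or $a_1b_1a_2=0$.

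Consequently the identity as printed is false for $d\ge2$ whenever $a_1b_1a_2\neq0$: for $d=2$, $a_1=b_1=a_2=1$, $x=B_2$, the left side gives $B_2+2A_1+2B_1+2A_2$ while the right side gives $B_2+2A_1+2B_1$ (the opposite composition convention fares no better, yielding $B_2+2A_1+2B_1+8A_2$). The exponent on the leading factor must be $(a_1b_1d+1)a_2^2$ rather than $(a_1b_1+1)a_2^2$ (with factors applied right to left; $(1-a_1b_1d)a_2^2$ in the other convention), and with that correction your bookkeeping closes up exactly as you describe. Note that the paper's first application, $a_1=1$, $b_1=0$, $a_2=-1$, has $a_1b_1a_2=0$ and so is unaffected, but the second, $a_1=b_1=2$, $a_2=1$, is sensitive: the coefficient $5$ in equation (\ref{d times}) becomes $4d+1$. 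You flagged ``keeping careful track of the signs and of the $d^2$-terms'' as the only delicate point, and that is exactly right --- but the proposal asserts the cancellation instead of verifying it, and for the statement as printed the cancellation fails, so the proof does not go through without first repairing the exponent.
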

If we put $a_1=1, a_2=-1, b_1=0$, we obtain
\[
[T_{A_1+A_2}^d]+[T_{A_1-A_2}^d]=2[T_{A_1}^d]+2[T_{A_2}^d].
\]
Let $x, y\in H_1(\Sigma_{g,r};\mathbf{Z})$ be elements such that $x\cdot y=0$ and $\{x,y\}$ can be extended to form a basis of $H_1(\Sigma_{g,r};\mathbf{Z})$). Then, there exists $\varphi\in\mathcal{M}_{g,r}$ which satisfies $\varphi_*(x)=A_1$, and $\varphi_*(y)=A_2$. This shows that:
\begin{lemma}\label{relation}
If $x, y\in H_1(\Sigma_{g,r};\mathbf{Z})$ satisfy $x\cdot y=0$, and $\{x,y\}$ can be extended to form a basis of $H_1(\Sigma_{g,r};\mathbf{Z})$, then we have
\begin{gather*}
[T_{x+y}^d]+[T_{x-y}^d]=2[T_x^d]+2[T_y^d].
\end{gather*}
\end{lemma}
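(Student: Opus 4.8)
The plan is to reduce the general relation to the special case $x=A_1$, $y=A_2$, which has already been extracted from Lemma \ref{matrix} by setting $a_1=1$, $a_2=-1$, $b_1=0$, giving
\[
[T_{A_1+A_2}^d]+[T_{A_1-A_2}^d]=2[T_{A_1}^d]+2[T_{A_2}^d]
\]
in $H_1(\Gamma_g[d];\mathbf{Z})$. The passage from this base case to an arbitrary admissible pair $(x,y)$ will be carried out by the conjugation action of $\Sp(2g;\mathbf{Z})$ on $H_1(\Gamma_g[d];\mathbf{Z})$, transporting the identity along a change of symplectic basis.

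First I would produce a mapping class $\varphi\in\mathcal{M}_{g,r}$ with $\varphi_*(x)=A_1$ and $\varphi_*(y)=A_2$; since $\rho$ is onto, this amounts to finding $\Phi\in\Sp(2g;\mathbf{Z})$ with $\Phi(x)=A_1$, $\Phi(y)=A_2$. The hypotheses are exactly what is needed: $x\cdot y=0$ makes $\langle x,y\rangle$ isotropic, and the assumption that $\{x,y\}$ extends to a basis makes it a rank-two direct summand of $H_1(\Sigma_{g,r};\mathbf{Z})$. By the standard completion lemma for symplectic lattices, such an isotropic direct summand can be extended to a symplectic basis whose first two $A$-vectors are $x$ and $y$; the symplectic transformation carrying this basis to the standard one $\{A_i,B_i\}$ then sends $x\mapsto A_1$ and $y\mapsto A_2$, and I lift it through $\rho$.

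The second ingredient is the equivariance of $v\mapsto[T_v^d]$. For any $\Phi\in\Sp(2g;\mathbf{Z})$ and $v\in H_1(\Sigma_{g,r};\mathbf{Z})$, a direct computation with $T_v(w)=w+(v\cdot w)v$, using that $\Phi$ preserves the intersection form, gives $\Phi T_v\Phi^{-1}=T_{\Phi(v)}$, hence $\Phi T_v^d\Phi^{-1}=T_{\Phi(v)}^d$. Because $\Gamma_g[d]$ is normal in $\Sp(2g;\mathbf{Z})$, being the kernel of the mod $d$ reduction, conjugation by $\rho(\varphi)$ — which need not itself lie in $\Gamma_g[d]$ — still restricts to an automorphism of $\Gamma_g[d]$ and therefore induces an automorphism $\varphi_*$ of $H_1(\Gamma_g[d];\mathbf{Z})$ satisfying $\varphi_*[T_v^d]=[T_{\varphi_*(v)}^d]$.

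Finally I would apply $\varphi_*$ to the target identity: since $\varphi_*(x+y)=A_1+A_2$, $\varphi_*(x-y)=A_1-A_2$, $\varphi_*(x)=A_1$, and $\varphi_*(y)=A_2$, the relation $[T_{x+y}^d]+[T_{x-y}^d]=2[T_x^d]+2[T_y^d]$ is carried to the base relation displayed above, which holds; as $\varphi_*$ is an automorphism, the target relation follows. The argument is otherwise formal, so the steps I would be most careful about are the completion lemma that realizes $(x,y)$ as the start of a symplectic basis, and the verification that $v\mapsto[T_v^d]$ is genuinely covariant under the conjugation action: because $\rho(\varphi)$ lies outside $\Gamma_g[d]$, one must invoke normality of $\Gamma_g[d]$ in $\Sp(2g;\mathbf{Z})$ to see that conjugation descends to a well-defined automorphism of the abelianization rather than working inside the group.
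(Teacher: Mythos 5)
Your proposal is correct and follows essentially the same route as the paper: the paper likewise specializes Lemma \ref{matrix} at $a_1=1$, $a_2=-1$, $b_1=0$ to get the relation for $(A_1,A_2)$ and then transports it to a general pair via a mapping class $\varphi\in\mathcal{M}_{g,r}$ with $\varphi_*(x)=A_1$, $\varphi_*(y)=A_2$. You merely make explicit the details the paper leaves implicit — the symplectic completion of the isotropic summand $\langle x,y\rangle$, the identity $\Phi T_v\Phi^{-1}=T_{\Phi(v)}$, and the normality of $\Gamma_g[d]$ ensuring conjugation acts on $H_1(\Gamma_g[d];\mathbf{Z})$ — all of which are sound.
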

\begin{remark}
For $i=1,2,3,4$, let $D_i\subset S^2$ be mutually disjoint disks. By the assumption of $x,y$ in Lemma \ref{relation}, we can choose an embedding $i:S^2-\amalg_{i=1}^4 D_i\to \Sigma_{g,r}$ such that $[i(\partial D_1)]=-[i(\partial D_2)]=x$, and $[i(\partial D_3)]=-[i(\partial D_4)]=y$. The Lantern relation of this embedding
\[
T_{x+y}T_{x-y}=T_x^2T_y^2\in \Sp(2g;\mathbf{Z})
\] 
also shows the above relation.
\end{remark}
Put $x=kA_1+A_2$, $y=A_1$ in the equation of Lemma \ref{relation}, and take the summation over $k=1,2,\cdots,d-1$. Then, we have
\begin{equation}\label{transvection-even}
2d[T_{A_1}^d]=0\in H_1(\Gamma_g[d];\mathbf{Z}).
\end{equation}
Next, we show that $d[T_{A_1}^d]=0$ when $d$ is odd. By the equation (\ref{transvection-even}) and Lemma \ref{relation}, we have
\[
d[T_{x+2ky}^d]=d[T_{x+2(k+1)y}^d]
\]
for $k\in\mathbf{Z}$. If $d$ is odd, we obtain
\begin{equation}\label{indep of element}
d[T_x^d]=d[T_{x+y}^d]=d[T_y^d]. 
\end{equation}
If we put $a_1=b_1=2$, and $a_2=1$ in Lemma \ref{matrix}, we have
\begin{equation}\label{d times}
[T_{2A_1+2B_1+A_2}^d]=5[T_{A_2}]^d+2([T_{B_1+A_2}^d]-[T_{A_2}^d]-[T_{B_1}^d])+2([T_{A_1+A_2}^d]-[T_{A_1}^d]-[T_{A_2}^d])+[T_{2A_1+2B_1}^d].
\end{equation}
If we apply the equation (\ref{indep of element}) to $d$ times the equation (\ref{d times}), we have
\begin{equation}\label{transvection-odd}
d[T_{A_1}^d]=0\in H_1(\Gamma_g[d];\mathbf{Z}).
\end{equation}
We need the theorem proved by Mennicke\cite{mennicke1965tsm}, which is essential in this proof. 
\begin{theorem}[Mennicke]\label{Mennicke}
Let $g\ge2$ and $d>0$. If $Q$ is a normal subgroup of $\Sp(2g;\mathbf{Z})$ which contains $T_{A_1}^d$, then 
\[
\Gamma_g[d]\subset Q.
\]
\end{theorem}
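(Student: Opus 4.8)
The plan is to show that the normal subgroup $Q$ already contains a full generating set of $\Gamma_g[d]$, building up from the single element $T_{A_1}^d$ in three stages of increasing depth. The first and easiest stage exploits normality together with the transitivity of the $\Sp(2g;\mathbf{Z})$-action on primitive vectors. For any $\varphi\in\Sp(2g;\mathbf{Z})$, the identity $T_y(x)=x+(y\cdot x)y$ and the invariance of the intersection form give $\varphi T_y^d\varphi^{-1}=T_{\varphi(y)}^d$. Since every primitive vector of $H_1(\Sigma_g;\mathbf{Z})$ can be completed to a symplectic basis, the $\Sp(2g;\mathbf{Z})$-orbit of $A_1$ is the whole set of primitive vectors, and hence $T_y^d\in Q$ for every primitive $y$.

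The second stage passes from these transvection powers to the remaining elementary symplectic generators. The transvections $T_y$ ($y$ primitive) are the long-root elements of the $C_g$ root system and generate $\Sp(2g;\mathbf{Z})$, but the short-root (non-transvection) symplectic matrices are genuinely different and must be produced separately. Here I would use normality in the stronger form $[T_x^d,\varphi]=T_x^d\,\bigl(\varphi (T_x^d)^{-1}\varphi^{-1}\bigr)\in Q$ for all $\varphi\in\Sp(2g;\mathbf{Z})$: applying the Chevalley commutator relations to $[T_x^d,T_y]$ with $x\cdot y=\pm1$ extracts the $d$-th powers of the short-root generators (this is exactly where a second hyperbolic summand $A_2,B_2$, hence $g\ge2$, is used). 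At the end of this stage $Q$ contains the relative elementary symplectic subgroup $E_{2g}(\mathbf{Z},d\mathbf{Z})$, the normal subgroup generated by all $d$-th powers of root elements.

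The decisive and genuinely hard stage is the identification $E_{2g}(\mathbf{Z},d\mathbf{Z})=\Gamma_g[d]$, i.e.\ that no further element of the congruence subgroup escapes $Q$; this is precisely the congruence subgroup property for $\Sp(2g;\mathbf{Z})$, $g\ge2$. I would organize it by induction on $g$ via the embeddings $\Sp(2g-2;\mathbf{Z})\hookrightarrow\Sp(2g;\mathbf{Z})$: an arbitrary $A\in\Gamma_g[d]$ can be multiplied by elements of $Q$ to clear its first row and column and be pushed into a smaller-rank congruence subgroup, reducing the whole problem to $\Gamma_2[d]$ and to rank-one computations. The obstruction to completing this reduction is measured by a Mennicke symbol attached to a relatively prime column with values in $\Gamma_g[d]/Q$; explicit matrix identities of the type used in Lemma~\ref{matrix} would establish that this symbol is bilinear and satisfies the Mennicke relations, and the arithmetic of $\mathbf{Z}$ then forces it to vanish, so that $\Gamma_g[d]/Q=1$. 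I expect essentially all the difficulty to be concentrated in this last stage: the first two stages are formal, whereas the vanishing of the Mennicke symbol together with the genus-$2$ base case is exactly where the depth of the theorem lies.
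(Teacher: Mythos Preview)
The paper does not give its own proof of this statement: Theorem~\ref{Mennicke} is quoted verbatim as a result of Mennicke~\cite{mennicke1965tsm} and is used as a black box in the proof of Proposition~\ref{commutator} (and again in Subsection~\ref{Johnsonmap}). There is therefore no proof in the paper against which to compare your proposal.

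That said, your outline is a reasonable sketch of how the result is actually established in the literature. The first stage (conjugating $T_{A_1}^d$ by $\Sp(2g;\mathbf{Z})$ to obtain $T_y^d$ for all primitive $y$) is correct and elementary. The second stage, producing the short-root generators via commutators $[T_x^d,T_y]$, is also the right idea, though you should be aware that the Chevalley commutator formulas in type $C_g$ are more delicate than in type $A$: for a long root $\alpha$ and a short root $\beta$ with $\alpha+\beta$ short, the commutator $[x_\alpha(t),x_\beta(u)]$ involves a coefficient $2$, so some care is needed to show that the level-$d$ short-root elements (and not merely their squares) actually appear. Your third stage correctly identifies the real content of the theorem as the congruence subgroup property for $\Sp(2g;\mathbf{Z})$, and the reduction via Mennicke symbols you describe is exactly the strategy of Mennicke~\cite{mennicke1965tsm} and Bass--Milnor--Serre~\cite{bass1967scs}. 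As you yourself note, this last stage is where essentially all the difficulty lies, and your proposal is really a roadmap rather than a proof; filling it in amounts to reproducing a substantial portion of those papers.
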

By the equation (\ref{transvection-odd}), we have $T_{A_1}^{d^2}\in[\Gamma_g[d],\Gamma_g[d]]$ when $d$ is odd. By the equation (\ref{transvection-even}), we also have $T_{A_1}^{2d^2}\in[\Gamma_g[d],\Gamma_g[d]]$ when $d$ is even. Hence we obtain
\begin{gather*}
\Gamma_g[2d^2]\subset[\Gamma_g[d],\Gamma_g[d]] \text{ if }d\text{ is even, and}\\
\Gamma_g[d^2]\subset[\Gamma_g[d],\Gamma_g[d]] \text{ if }d\text{ is odd.}
\end{gather*}
Thus, we have proved the case when $d$ is odd. To prove the case when $d$ is even, it suffices to show:
\begin{lemma}
If $d$ is even,
\[
\Gamma_g[d^2,2d^2]\subset[\Gamma_g[d],\Gamma_g[d]].
\]
\end{lemma}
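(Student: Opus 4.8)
The plan is to pass to the finite $2$-group $\Gamma_g[d^2]/\Gamma_g[2d^2]$. Since we have already shown $\Gamma_g[2d^2]\subset[\Gamma_g[d],\Gamma_g[d]]$ and, by Igusa, $[\Gamma_g[d],\Gamma_g[d]]\subset\Gamma_g[d^2,2d^2]$, it suffices to prove that the image of $[\Gamma_g[d],\Gamma_g[d]]$ in $\Gamma_g[d^2]/\Gamma_g[2d^2]$ is all of $\Gamma_g[d^2,2d^2]/\Gamma_g[2d^2]$. Writing $A=I+d^2A'$ and sending $A\mapsto A'\bmod 2$ identifies $\Gamma_g[d^2]/\Gamma_g[2d^2]$ with a subgroup of the additive group $\mathfrak{sp}(2g;\mathbf{Z}_2)$ of matrices $\left(\begin{smallmatrix}p&q\\r&-{}^tp\end{smallmatrix}\right)$ over $\mathbf{Z}_2$ with ${}^tq=q$, ${}^tr=r$; the reduction lands in $\mathfrak{sp}(2g;\mathbf{Z}_2)$ precisely because $d$ is even, so that the symplectic relation ${}^tA'J+JA'=-d^2\,{}^tA'JA'$ gives ${}^tA'J+JA'\equiv0\mod 2$. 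Under this identification the subgroup $\Gamma_g[d^2,2d^2]/\Gamma_g[2d^2]$ is exactly the subspace $W$ of $\mathfrak{sp}(2g;\mathbf{Z}_2)$ cut out by the vanishing of the diagonal entries of the $q$- and $r$-blocks, of dimension $2g^2-g$.

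The key reduction is that, for $d$ even, commutators become Lie brackets in this quotient. Expanding $A=I+dA'$ and $B=I+dB'$ gives $[A,B]=I+d^2(A'B'-B'A')+d^3M$ for some integral matrix $M$; since $d$ is even, $d^3M\in 2d^2\cdot(\text{integral matrices})$, so $[A,B]\equiv I+d^2[A',B']\mod 2d^2$, where $[A',B']=A'B'-B'A'$. Hence the class of $[A,B]$ in $\mathfrak{sp}(2g;\mathbf{Z}_2)$ is the Lie bracket $[\bar A',\bar B']$ of the classes of $A$ and $B$. It therefore remains to realize every element of $W$ as such a bracket of elements coming from $\Gamma_g[d]$.

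This I would do with explicit generators: the transvections $T_{A_i}^d$ and $T_{A_i+A_j}^d$ reduce to the symmetric generators $E_{ii}$ and $E_{ii}+E_{ij}+E_{ji}+E_{jj}$ of the $q$-block (and the $T_{B_i}^d$ give the $r$-block), while the elementary symplectic matrices $I+d(E_{ij}-E_{g+j,g+i})$ with $i\ne j$ reduce to the off-diagonal matrix units $E_{ij}$ of the $p$-block. Computing brackets with the formula above: a $q$-block class $q_1$ against an $r$-block class $r_2$ yields the $p$-block $q_1r_2$, and as $q_1,r_2$ range over symmetric matrices these products span the entire $p$-block (e.g.\ $E_{ii}(E_{ij}+E_{ji})=E_{ij}$ and $E_{ii}E_{ii}=E_{ii}$); a $p$-block class $E_{ij}$ ($i\ne j$) against the $q$-block class $E_{jj}$ yields $p_1q_2+q_2\,{}^tp_1=E_{ij}+E_{ji}$, the off-diagonal symmetric generator of the $q$-block, and similarly for the $r$-block. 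These brackets together span $W$, which finishes the proof. The place where I expect the real work—and the reason the correct subgroup is $\Gamma_g[d^2,2d^2]$ rather than $\Gamma_g[d^2]$—is the characteristic-$2$ phenomenon that the diagonal of $p_1q_2+q_2\,{}^tp_1$ always vanishes (because $(p_1q_2)_{ii}=(q_2\,{}^tp_1)_{ii}$ for symmetric $q_2$, and $x+x=0$ over $\mathbf{Z}_2$), so the diagonal $q$- and $r$-entries are exactly the directions that no bracket can reach; this matches the already-known inclusion $[\Gamma_g[d],\Gamma_g[d]]\subset\Gamma_g[d^2,2d^2]$ and shows that it is sharp.
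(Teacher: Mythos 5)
Your proof is correct, and it shares the paper's skeleton: both arguments pass to the quotient $\Gamma_g[d^2]/\Gamma_g[2d^2]$ (using the already-established inclusion $\Gamma_g[2d^2]\subset[\Gamma_g[d],\Gamma_g[d]]$), identify it with a subspace of $\mathfrak{sp}(2g;\mathbf{Z}_2)$ via $I+d^2C\mapsto C\bmod 2$, and rest on the same commutator congruence $ABA^{-1}B^{-1}\equiv I+d^2(A'B'-B'A')$ --- the paper states it mod $d^3$, which for even $d$ implies your mod $2d^2$ version, so the two are interchangeable here. Where you genuinely diverge is the generation step. The paper exploits the fact that $[\Gamma_g[d],\Gamma_g[d]]$ is normal in $\Sp(2g;\mathbf{Z})$: using the homomorphism $m_2'$ it identifies $\Gamma_g[d^2,2d^2]/\Gamma_g[2d^2]\cong\mathbf{Z}_2^{2g^2-g}$, shows this is generated as an $\Sp(2g;\mathbf{Z})$-module by only two elements, $I+d^2(e_{1\,g+2}+e_{2\,g+1})$ and $I+d^2(e_{1\,1}-e_{g+1\,g+1})$ (conjugating by explicit symplectic maps moving the tuple $(A_1,A_2,B_1,B_2)$ around), and then realizes those two as single commutators --- in fact via exactly the brackets $[e_{1\,g+1},e_{g+1\,1}]$ and $[e_{1\,2}-e_{g+2\,g+1},\,e_{2\,g+2}]$, which occur among yours. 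You instead bypass the equivariance and normality argument entirely and span all of $W$ directly by brackets of reductions of transvections and elementary symplectic matrices; your block computations (the $p$-block $q_1r_2$ from a $q$-class against an $r$-class, and $p_1q_2+q_2\,{}^tp_1$ in the $q$-block) are correct, and your closing observation that the diagonal of $p_1q_2+q_2\,{}^tp_1$ dies in characteristic $2$ is a genuine insight not spelled out in the paper: it explains structurally why $\Gamma_g[d^2,2d^2]$, not $\Gamma_g[d^2]$, is the sharp answer. The trade-off: your route is more self-contained and purely linear-algebraic (no verification of the three symplectic conjugators, no appeal to normality of the commutator subgroup in the full symplectic group), at the cost of heavier matrix bookkeeping; the paper's route compresses the spanning problem to two generators once the $\Sp$-action is invoked. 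One small point to make explicit in a final write-up: you only need the inclusion of $\Gamma_g[d^2,2d^2]/\Gamma_g[2d^2]$ into $W$, which is immediate from the definition together with the mod-$2$ symplectic relations, rather than the equality you assert --- though equality does hold, as the paper's computation with $m_2'$ shows.
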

\begin{proof}
We have already known that $\Gamma_g[2d^2]\subset [\Gamma_g[d],\Gamma_g[d]]$. Hence, we examine the quotient group $\Gamma_g[d^2, 2d^2]/\Gamma_g[2d^2]$. The symplectic group $\Sp(2g;\mathbf{Z})$ acts on $\Gamma_g[d^2, 2d^2]/\Gamma_g[2d^2]$ by the conjugation action. For $1\le i,j\le 2g$, denote the $2g\times 2g$ matrix $e_{i\,j}$ which has 1 in the $(i,j)$-element, and 0 in the other elements. First, we prove that $\Gamma_g[d^2, 2d^2]/\Gamma_g[2d^2]$ is generated by $I+d^2(e_{1\, g+2}+e_{2\, g+1})$, $I+d^2(e_{1\,1}-e_{g+1\,g+1})$ as a $\Sp(2g;\mathbf{Z})$-module.

Similar to the homomorphism $m$, for a matrix
\[
A=I+d^2\begin{pmatrix}p(A)&q(A)\\r(A)&s(A)\end{pmatrix}\in\Gamma_g[d^2,2d^2],
\]
we define the surjective homomorphism $m'_2:\Gamma_g[d^2,2d^2]\to \mathbf{Z}_2^{2g^2-g}$ by
\[
m'_2:=(\{p_{ij}(A)\}_{1\le i\le g, 1\le j\le g}, \{q_{ij}(A)\}_{1\le i<j\le g},\{r_{ij}(A)\}_{1\le i<j\le g})\ \mod 2.
\]
Then, it is easy to see that $\Ker m'_2=\Gamma_g[2d^2]$, and $m'_2$ induces the isomorphism $\Gamma_g[d^2, 2d^2]/\Gamma_g[2d^2]\cong\mathbf{Z}_2^{2g^2-g}$. For $i,j$ such that
$1\le i,j\le g$, $i\ne j$, there are elements of $\Sp(2g;\mathbf{Z})$ which map 4-tuple of homology classes $(A_1, A_2, B_1, B_2)$ to
\[
(A_i, A_j, B_i, B_j), (B_i, B_j, -A_i, -A_j)\text{, and } (A_i, -B_j, B_i, A_j),
\]
respectively. By the conjugation action, these elements send $I+d^2(e_{1\,g+2}+e_{2\,g+1})$ to 
\[
I+d^2(e_{i\,g+j}+e_{j\,g+i}), I+d^2(e_{g+i\,j}+e_{g+j\,i})\text{, and }I+d^2(e_{i\,j}-e_{g+j\,g+i})\in\Gamma_g[d^2, 2d^2]/\Gamma_g[2d^2],
\]
respectively. Denote the Kronecker delta by $\delta_{ij}$. Then we have 
\begin{equation}\label{without diagonal}
p_{kl}(I+d^2(e_{i\,g+j}+e_{j\,g+i}))=q_{kl}(I+d^2(e_{g+i\,j}+e_{g+j\,i}))=r_{kl}(I+d^2(e_{i\,j}-e_{g+j\,g+i}))=\delta_{ik}\delta_{jl}.
\end{equation}
In the same way, there is an element of $\Sp(2g;\mathbf{Z})$ which map the pair $(A_1, B_1)$ to $(A_i, B_i)$. This element sends $I+d^2(e_{1\,1}-e_{g+1\,g+1})$ to $I+d^2(e_{i\,i}-e_{g+i\,g+i})$. Note that 
\begin{equation}\label{diagonal}
p_{kk}(I+d^2(e_{i\,i}+e_{g+i\,g+i}))=\delta_{ik}.
\end{equation}
Then we see that from the equations (\ref{without diagonal}) and (\ref{diagonal}), $\Gamma_g[d^2, 2d^2]/\Gamma_g[2d^2]\cong \mathbf{Z}_2^{2g^2-g}$ is  generated by the elements $I+d^2(e_{1\, g+2}+e_{2\, g+1})$, $I+d^2(e_{1\,1}-e_{g+1\,g+1})$ as a $\Sp(2g;\mathbf{Z})$-module.

Next, we will show that
\begin{equation}\label{generator}
I+d^2(e_{1\,1}-e_{g+1\,g+1}), I+d^2(e_{1\, g+2}+e_{2\, g+1})\in[\Gamma_g[d],\Gamma_g[d]].
\end{equation}
For $A=I+dA', B=I+dB'\in\Gamma_g[d]$, we have
\[
ABA^{-1}B^{-1}\equiv I+d^2(A'B'-B'A')\ \mod d^3.
\]
If we put $A'=e_{1\,g+1},B'=e_{g+1\,1}$, and $A'=e_{1\,2}-e_{g+2\,g+1}, B'=e_{2\,g+2}$, we get (\ref{generator}). 

The fact (\ref{generator}) shows
\[
\Gamma_g[d^2,2d^2]\subset[\Gamma_g[d],\Gamma_g[d]].
\]
This proves the lemma.
\end{proof}
Hence, we complete the proof of Proposition \ref{commutator}. 
\newpage

\section{The abelianization of the level 2 mapping class group}
In this section, we will define a family of homomorphisms
\[
\beta_{\sigma,x}:\mathcal{M}_{g,1}[2]\to\Omega_2^{pin^-}\cong\mathbf{Z}_{8},
\]
for $\sigma\in\spin(\Sigma_g)$ and $x\in H_1(\Sigma_g;\mathbf{Z}_2)$ (Lemma \ref{homomorphism beta}). This family determines the abelianization of the level 2 mapping class group. The homomorphism $\beta_{\sigma,x}$ is proved to be an extension of the homomorphism $\omega_{\sigma,y}$ defined by Heap \cite{heap2005bim} to the level 2 mapping class group (Subsection \ref{Heap}). We will calculate the values of this homomorphism on generators of the level 2 mapping class group using the Brown invariant (Proposition \ref{value of beta}).
\subsection{Spin structures of mapping tori}
Fix a closed disk neighborhood $N(c_0)$ of a point $c_0$ in $\Sigma_g$. The mapping class group $\pi_0\Diff_+(\Sigma_g, N(c_0))$ is the group of isotopy classes of orientation-preserving diffeomorphisms of $\Sigma_g$ which fix the neighborhood $N(c_0)$ pointwise. By restricting each diffeomorphism to $\Sigma_g-\Int N(c_0)$, the group $\pi_0\Diff_+(\Sigma_g, N(c_0))$ is isomorphic to $\mathcal{M}_{g,1}$. Hence, we identify these two groups. We also identify the kernel $\Ker(\pi_0\Diff_+(\Sigma_g, N(c_0))\to \Sp(2g;\mathbf{Z}_2))$ of the $\mod 2$ reduction of $\rho$ with $\mathcal{M}_{g,1}[2]$. 

For $\varphi=[f]\in\mathcal{M}_{g,1}$, denote the mapping torus of $\varphi$ by $M:=M_{\varphi}:=\Sigma_g\times [0,1]/\sim$, where the equivalence relation is given by $(f(x),0)\sim (x,1)$. In this subsection, we define a map $\theta:\spin(\Sigma_g)\to \spin(M_{\varphi})$ for $\varphi\in\mathcal{M}_{g,1}[2]$.

First, we define the spin structure of an oriented vector bundle. Let $E\to V$ be a smooth oriented vector bundle of rank $n$ on a smooth manifold $V$. We denote by $P(E)$ the oriented frame bundle associated to this bundle. When the Stiefel-Whitney class $w_2$ of $E$ vanishes, we define the spin structure of $E$ by a right inverse homomorphism of the natural homomorphism $H_1(P(E);\mathbf{Z}_2)\to H_1(V;\mathbf{Z}_2)$. Denote by $\spin(E)$ the set of spin structure of $E$. Since $P(E)$ is a principal $GL_+(n)$ bundle and $w_2$ vanishes, the Serre spectral sequence shows that
\[
\begin{CD}
0@>>>H^1(V;\mathbf{Z}_2)@>>>H^1(P(E);\mathbf{Z}_2)@>>>\mathbf{Z}_2@>>>0
\end{CD}
\]
is exact. Define the injective map
\[
\spin(E)\to H^1(P(E);\mathbf{Z}_2)
\]
by $\sigma\mapsto v$, where $v$ is the unique nontrivial element in $\Ker\sigma$. The element $v\in H^1(P(E);\mathbf{Z}_2)$ restricts to an non-trivial element in each fiber of $P(E)\to V$. This is also equivalent to consider the double cover of the orthonormal frame bundle associated to the bundle $E$ with a fiber metric. In detail, for example, see Lee-Miller-Weintraub\cite{lee1988rit} Section 1.1. For an oriented smooth $n$-manifold $V$, we define the spin structure of $V$ by the spin structure of the tangent bundle $TV$. We denote simply by $\spin(V):=\spin(TV)$ the set of spin structure on $V$. Note that a spin structure of $V$ is equivalent to a spin structure of $V\times (-\epsilon,\epsilon)^k$, for $\epsilon>0$ and $k>0$.

Next, we define the injective map $\theta:\spin(\Sigma_g)\to \spin(M_\varphi)$. Fix a spin structure on $\Sigma_g$. Since $\varphi\in\mathcal{M}_{g,1}[2]$ acts on $H_1(\Sigma_g;\mathbf{Z}_2)$ trivially, the Wang exact sequence is written as
\[
\begin{CD}
0@>>>H_1(\Sigma_g;\mathbf{Z}_2)@>>>H_1(M;\mathbf{Z}_2)@>>>H_1(S^1;\mathbf{Z}_2)@>>>0.
\end{CD}
\]
The inclusion map $l:N(c_0)\times S^1\to M$ gives the splitting
\[
H_1(M;\mathbf{Z}_2)=H_1(\Sigma_g;\mathbf{Z}_2)\oplus H_1(S^1;\mathbf{Z}_2).
\]
In order to define the spin structure on $M$, we will construct homomorphisms from each direct summand to $H_1(P(M);\mathbf{Z}_2)$. For $N(c_0)\times S^1\subset M$, define the framing $\hat{l}:S^1\to P(N(c_0)\times S^1)$ by $\hat{l}(t)=(v_0\cos 2\pi t+v_1\sin 2\pi t,v_1\cos 2\pi t-v_0\sin 2\pi t,v_{S^1}(t))$, where $\{v_0,v_1\}$ is a frame of $T_{c_0}N(c_0)$, and  $v_{S^1}(t)\in T_tS^1$ is a nonzero tangent vector. This framing induces the homomorphism
\[
\begin{CD}
H_1(S^1;\mathbf{Z}_2)@>\hat{l}>> H_1(P(N(c_0)\times S^1);\mathbf{Z}_2)@>\text{inc}_*>>H_1(P(M);\mathbf{Z}_2),
\end{CD}
\]
where $\text{inc}_*$ is the homomorphism induced by the inclusion map.

Next, consider the natural smooth map $P(\Sigma_g\times(-\epsilon,\epsilon)) \to P(M)$ induced by the inclusion of a tubular neighborhood $\Sigma_g\times(-\epsilon,\epsilon)\subset M$ for small $\epsilon$. Using the spin structure on $\Sigma_g$, we have the homomorphism
\begin{equation}\label{spin-sigma1}
\begin{CD}
H_1(\Sigma_g;\mathbf{Z}_2)@>\sigma>>H_1(P(\Sigma_g\times (-\epsilon,\epsilon));\mathbf{Z}_2)@>\text{inc}_*>>H_1(P(M);\mathbf{Z}_2).
\end{CD}
\end{equation}
Thus, we have constructed the homomorphism $H_1(M;\mathbf{Z}_2)\to H_1(P(M);\mathbf{Z}_2)$. In this way, we obtain the map $\theta:\spin(\Sigma_g)\to \spin(M)$.
\subsection{A spin manifold bounded by Mapping tori}\label{subsec:spin boundary}
Let $P_0:=S^2-\amalg_{i=1}^3\Int D_i$ denote a pair of pants, where $\{D_i\}$ are mutually disjoint disks and $\Int D_i$ is interior of $D_i$ in $S^2$.  Pick the paths $\alpha, \beta, \gamma\in \pi_1(P_0,x_0)$ going once round boundary components as in Figure \ref{fig: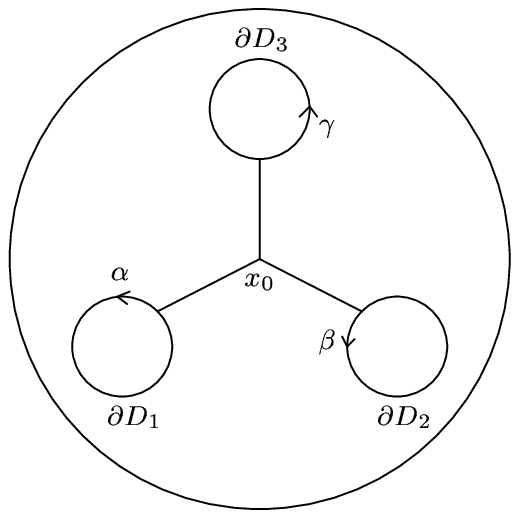}. Denote by $\Diff_+(\Sigma_g,N(c_0))[2]$ the kernel of the representation of $\Diff_+(\Sigma_g,N(c_0))$ on $H_1(\Sigma_g;\mathbf{Z}_2)$. Consider $\Sigma_g$ bundles with its structure group $\Diff_+(\Sigma_g,N(c_0))[2]$. For $\varphi,\psi\in\mathcal{M}_{g,1}[2]$, there exists a $\Sigma_g$ bundle $p:W=W_{\varphi,\psi}\to P_0$ such that the topological monodromy $\pi_1(P_0, x_0)\to \mathcal{M}_{g,1}[2]$ sends $\alpha$, $\beta$, and $\gamma \in\pi_1(P_0,x_0)$ to $\varphi$, $\psi$, and $(\varphi\psi)^{-1}\in \mathcal{M}_{g,1}[2]$, respectively. This bundle is unique up to diffeomorphism. Note that the boundary $\partial W$ is diffeomorphic to the disjoint sum $M_{\varphi}\amalg M_{\psi}\amalg M_{(\varphi\psi)^{-1}}$. 
\begin{figure}[h]
  \begin{center}
    \includegraphics{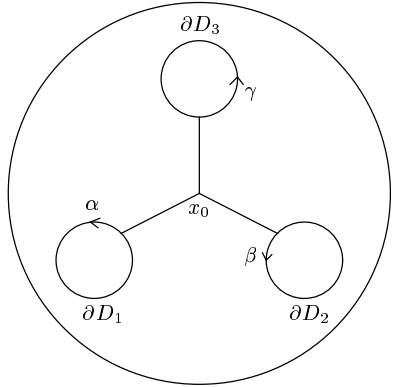}
  \end{center}
  \caption{loops in a pair of pants}
  \label{fig:pants.eps}
\end{figure}

In this subsection, we define a spin structure of $W$. We show that the induced spin structure on each boundary component of $W$ is equal to that of the mapping torus defined in the last subsection. 

Since $\varphi,\psi\in\mathcal{M}_{g,1}[2]$ act on $H_1(\Sigma_g;\mathbf{Z}_2)$ trivially, we have the splitting 
\[
H_1(W;\mathbf{Z}_2)=H_1(\Sigma_g;\mathbf{Z}_2)\oplus H_1(P_0;\mathbf{Z}_2)
\]
by the inclusion map $N(c_0)\times P_0\to W$. In order to define the spin structure on $W$, we will construct homomorphisms from each direct summand to $H_1(P(W);\mathbf{Z}_2)$. By the local triviality of the bundle $W\to P_0$, we have a neighborhood $\Sigma_g\times (-\epsilon,\epsilon)^2\subset W$ of the fiber on $x_0\in P_0$.  Define the homomorphism
\begin{equation}\label{pants-spin1}
\begin{CD}
H_1(\Sigma_g;\mathbf{Z}_2)@>\sigma>> H_1(P(\Sigma_g\times (-\epsilon,\epsilon)^2);\mathbf{Z}_2)@>\text{inc}_*>> H_1(P(W);\mathbf{Z}_2).
\end{CD}
\end{equation}
Next, we will construct the homomorphism $H_1(P_0;\mathbf{Z}_2)\to H_1(P(W);\mathbf{Z}_2)$. In the disk $D^2=\{(x,y)\in\mathbf{R}^2\,|\,x^2+y^2\le1\}$,  choose two mutually disjoint disks $D_1,D_2\subset \Int D^2$. Choose an orthonormal frame $\{v_0', v_1'\}$ of $\mathbf{R}^2$. Let $s:D^2-D_1-D_2\to P(D^2-D_1-D_2)=(D^2-D_1-D_2)\times \mathbf{R}^2$ be the trivial framing defined by $s(x)=(x,v'_0,v'_1)$. By identifying $P_0$ with $D^2-D_1-D_2$, we have the map $\hat{l}':P_0\to P(N(c_0)\times P_0)$ by $\hat{l}'(x)=(v_0,v_1,s(x))$. This map and the inclusion $N(c_0)\times P_0 \to W$ induce the homomorphism
\begin{equation}\label{pants-spin2}
\begin{CD}
H_1(P_0;\mathbf{Z}_2)@>\hat{l}'>>H_1(P(N(c_0)\times P_0);\mathbf{Z}_2)@>\text{inc}_*>>H_1(P(W);\mathbf{Z}_2).
\end{CD}
\end{equation}
Define the spin structure of $W$ by the homomorphisms (\ref{pants-spin1}) and (\ref{pants-spin2}). 

Note that the homomorphism (\ref{pants-spin1}) is equal to the composite of  (\ref{spin-sigma1}) and the inclusion homomorphism $H_1(P(M\times [0,\epsilon));\mathbf{Z}_2)\to H_1(P(W);\mathbf{Z}_2)$, where $M\times [0,\epsilon)\subset W$ is a collar neighborhood. We also see that the diagram
\[
\begin{CD}
H_1(\partial D_i;\mathbf{Z}_2)@>>> H_1(P_0;\mathbf{Z}_2)\\
@VVV @VVV \\
H_1(P(N(c_0)\times S^1);\mathbf{Z}_2)@>>> H_1(P(N(c_0)\times P_0);\mathbf{Z}_2)
\end{CD}
\]
commutes. Hence, the manifold $W$ is spin bounded by $M_{\varphi}$, $M_{\psi}$, and $M_{(\varphi\psi)^{-1}}$ which were defined in the last subsection. 
\subsection{The homomorphism $\beta_{\sigma,x}$ on the level 2 mapping class group}\label{rochlin}
In this subsection, we will construct a homomorphism which determines the abelianization of the group $\mathcal{M}_{g,1}[2]$, using the Rochlin functions of mapping tori.

First we review the simply transitive action of $H_1(\Sigma_g;\mathbf{Z}_2)$ on $\spin(\Sigma_g)$. Identify $H_1(\Sigma_g;\mathbf{Z}_2)$ with $H^1(\Sigma_g;\mathbf{Z}_2)$ by the Poincar\'{e} duality. By the Serre spectral sequence, we have the exact sequence
\[
\begin{CD}
1@>>>\mathbf{Z}_2@>>>H_1(P(\Sigma_g);\mathbf{Z}_2)@>>>H_1(\Sigma_g;\mathbf{Z}_2)@>>>1.
\end{CD}
\]
For $x\in H^1(\Sigma_g;\mathbf{Z}_2)=\Hom(H_1(\Sigma_g;\mathbf{Z}_2),\mathbf{Z}_2)$, 
we denote again by $x:H_1(\Sigma_g;\mathbf{Z}_2)\to H_1(P(\Sigma_g);\mathbf{Z}_2)$ the composite of $x:H_1(\Sigma_g;\mathbf{Z}_2)\to \mathbf{Z}_2$ and the inclusion $\mathbf{Z}_2\subset H_1(P(\Sigma_g);\mathbf{Z}_2)$. Hence, for $\sigma\in \spin(\Sigma_g)$, we have another spin structure $\sigma+x:H_1(\Sigma_g;\mathbf{Z}_2)\to H_1(P(\Sigma_g);\mathbf{Z}_2)$. In this way, $H_1(\Sigma_g;\mathbf{Z}_2)$ acts on $\spin(\Sigma_g)$.

Every spin 3-manifold is known to bound a spin 4-manifold. For $\varphi\in\mathcal{M}_{g,1}[2]$, choose a compact oriented spin manifold  $V$ which is spin bounded by the mapping torus $M=M_\varphi$. Then the Rochlin function of $(M,\sigma)$ is defined by
\[
R(M,\sigma):=\Sign V \mod 16.
\]
This is well-defined by Rochlin's theorem, and is called the Rochlin function. 
\begin{definition}
For $x\in H_1(\Sigma_g;\mathbf{Z}_2)$ and $\sigma\in \spin (\Sigma_g)$, define the map 
\[
\beta_{\sigma,x}:\mathcal{M}_{g,1}[2]\to\Bigl(\frac{1}{2}\mathbf{Z}\Bigr)/8\mathbf{Z}
\]
by $\beta_{\sigma,x}(\varphi):=(R(M_\varphi,\theta(\sigma))-R(M_\varphi), \theta(\sigma+x))/2\ \mod8$.
\end{definition}
Here, $\theta:\spin(\Sigma_g)\to\spin(M_\varphi)$ is the map defined in Subsection \ref{subsec:spin boundary}.  
\begin{lemma}\label{homomorphism beta}
$\beta_{\sigma,x}$ is a homomorphism.
\end{lemma}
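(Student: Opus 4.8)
The plan is to show that $\beta_{\sigma,x}$ respects composition by exploiting the cobordism $W = W_{\varphi,\psi}$ constructed in Subsection \ref{subsec:spin boundary}. The key observation is that $W$ is a spin manifold whose boundary is $M_\varphi \amalg M_\psi \amalg M_{(\varphi\psi)^{-1}}$, with the spin structure restricting to $\theta(\sigma)$ on each component. Since spin manifolds that together bound a spin manifold have signatures (reduced mod 16) that add up to zero, the Rochlin functions satisfy a cocycle-type relation. The arithmetic I expect is
\[
R(M_\varphi,\theta(\sigma)) + R(M_\psi,\theta(\sigma)) + R(M_{(\varphi\psi)^{-1}},\theta(\sigma)) = \Sign W \mod 16,
\]
and applying the same identity with the shifted spin structure $\theta(\sigma+x)$ on the boundary, then subtracting, makes the $\Sign W$ terms cancel because both shifts bound the \emph{same} underlying 4-manifold $W$ with two different spin structures.

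First I would record that the signature of $W$ itself is independent of the chosen spin structure on $W$—it is a topological invariant of the closed-up manifold—so the difference $\Sign W - \Sign W$ vanishes when I compute $\beta_{\sigma,x}(\varphi) + \beta_{\sigma,x}(\psi) + \beta_{\sigma,x}((\varphi\psi)^{-1})$. This reduces the homomorphism property to the single relation
\[
\beta_{\sigma,x}(\varphi) + \beta_{\sigma,x}(\psi) + \beta_{\sigma,x}((\varphi\psi)^{-1}) = 0 \in \Bigl(\tfrac12\mathbf{Z}\Bigr)/8\mathbf{Z}.
\]
Second I would verify the inversion relation $\beta_{\sigma,x}(\psi^{-1}) = -\beta_{\sigma,x}(\psi)$: orientation reversal of the mapping torus $M_{\psi^{-1}} \cong \overline{M_\psi}$ negates the Rochlin function, and the spin structure $\theta(\sigma)$ transports correctly under this orientation reversal because the construction of $\theta$ in the previous subsection is natural. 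Combining these two facts gives $\beta_{\sigma,x}(\varphi\psi) = \beta_{\sigma,x}(\varphi) + \beta_{\sigma,x}(\psi)$, which is exactly the homomorphism property.

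The \textbf{main obstacle} will be justifying that the spin structure on $W$ induced by the homomorphisms (\ref{pants-spin1}) and (\ref{pants-spin2}) restricts on each boundary component precisely to $\theta(\sigma)$ as defined in the mapping-torus construction—this compatibility is exactly what the commuting diagram and the remark at the end of Subsection \ref{subsec:spin boundary} are designed to supply, so I would cite that computation directly rather than redo it. A subtler point is that the shifted spin structures $\theta(\sigma+x)$ must also be simultaneously realizable on a single spin structure of $W$ (obtained by twisting the $H_1(\Sigma_g;\mathbf{Z}_2)$-summand in (\ref{pants-spin1}) by $x$), so that the same cancellation of $\Sign W$ applies to the shifted terms; this follows because $x$ extends over $W$ through the fiberwise inclusion, using once more that $\varphi,\psi$ act trivially on $H_1(\Sigma_g;\mathbf{Z}_2)$ and hence the splitting of $H_1(W;\mathbf{Z}_2)$ respects the $H_1(\Sigma_g;\mathbf{Z}_2)$-factor. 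Finally I would note that the division by $2$ and the target $(\tfrac12\mathbf{Z})/8\mathbf{Z}$ are harmless: all the identities above hold integrally in $\mathbf{Z}_{16}$ before dividing, so halving and reducing mod $8$ preserves them.
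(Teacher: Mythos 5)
Your proposal is correct and follows essentially the same route as the paper: the paper's proof likewise uses the spin cobordism $W_{\varphi,\psi}$ to obtain $R(M_\varphi,\theta(\sigma))+R(M_\psi,\theta(\sigma))-R(M_{\varphi\psi},\theta(\sigma))\equiv\Sign W_{\varphi,\psi} \mod 16$ for both $\sigma$ and $\sigma+x$, and subtracts so that $\Sign W_{\varphi,\psi}$ cancels. Your explicit handling of the orientation-reversal step $\beta_{\sigma,x}(\psi^{-1})=-\beta_{\sigma,x}(\psi)$ and of the simultaneous realizability of the shifted spin structure on $W$ merely spells out what the paper leaves implicit.
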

\begin{proof}
As we saw in Subsection \ref{subsec:spin boundary}, for $\varphi, \psi\in\mathcal{M}_{g,1}[2]$, the spin 4-manifold $W_{\varphi,\psi}$ is spin bounded by the mapping tori $M_{\varphi}\amalg M_{\psi}\amalg M_{(\varphi\psi)^{-1}}$. Therefore, we have
\begin{gather*}
R(M_\varphi,\theta(\sigma))+R(M_\psi,\theta(\sigma))-R(M_{\varphi\psi},\theta(\sigma))\equiv\Sign W_{\varphi,\psi},\\
R(M_\varphi,\theta(\sigma+x))+R(M_\psi,\theta(\sigma+x))-R(M_{\varphi\psi},\theta(\sigma+x))\equiv\Sign W_{\varphi,\psi}\ \mod 16.
\end{gather*}
Hence, we have $\beta_{\sigma,x}(\varphi\psi)=\beta_{\sigma,x}(\varphi)+\beta_{\sigma,x}(\psi)$.
\end{proof}
As we will show in Subsection \ref{brown invariant}, the image $\Im\beta_{\sigma,x}$ is in $\mathbf{Z}_8$. Denote by $\Map(H_1(\Sigma_g;\mathbf{Z}_2);\mathbf{Z}_{8})$ the free $\mathbf{Z}_8$-module consisting  of all maps  $H_1(\Sigma_g;\mathbf{Z}_2)\to \mathbf{Z}_{8}$. We can define the homomorphism $\beta_\sigma:\mathcal{M}_{g,1}[2]\to \Map(H_1(\Sigma_g;\mathbf{Z}_2);\mathbf{Z}_8)$ by $\beta_\sigma(\varphi)(x)=\beta_{\sigma,x}(\varphi)$. 
\subsection{Brown invariant}\label{brown invariant}
The $\Pin^{-}(n)$ group is a central extension of the $O(n)$ bundle. Let $F$ be a (not necessarily orientable) closed surface. A pin$^{-}$ structure of $F$ is defined by the principal $\Pin^-(2)$ bundle associated to the tangent bundle $TF$, where $\Pin^{-}(2)$ acts on $\mathbf{R}^2$ via its covering projection to $O(2)$. Brown defined the invariant of a closed surface $F$ with a pin$^{-}$ structure, called the Brown invariant. In this subsection, We review this invariant and its relation to the Rochlin functions stated by Turaev\cite{turaev1984crl}. 
\begin{definition}
Let $F$ be a (not necessarily orientable) closed surface. If a function $\hat{q}:H_1(F;\mathbf{Z}_2)\to \mathbf{Z}_4$ satisfies $\hat{q}(x+y)=\hat{q}(x)+\hat{q}(y)+2x\cdot y$, we call $\hat{q}$ the quadratic enhancement.
\end{definition}
On a closed surface $F$, a pin$^{-}$ structure $\alpha$ induces a quadratic enhancement $\hat{q}_\alpha$ as follows. Denote the determinant line bundle of the tangent bundle by $\det F$.  Then $E:=TF\oplus \det F$ has a canonical orientation. The set of pin$^{-}$ structures of $F$ is known to corresponds bijectively to the set of spin structures of $E$. In detail, see Kirby-Taylor \cite{kirby2psl}. For an element $v\in H_1(F;\mathbf{Z}_2)$, choose a simple closed curve $K\subset F$ which represents $v$.  Denote the normal bundle of $TK\subset TF|_K$ and $TF\subset E|_F$ by $N(F/K)$ and $N(E/F)$, respectively.  Then the restriction $E|_K$ can be written as $E|_K=TK\oplus N(F/K)\oplus N(E/F)|_K$. If we fix the orientation of $K$, the bundle $E'=N(F/K)\oplus N(E/F)|_K$ gets also oriented. 

Choose a non-zero section $s_K:K\to TK$. We call a framing $s:K\to P(E')$ is odd if and only if the induced homomorphism $s\oplus s_K:H_1(K;\mathbf{Z}_2)\to H_1(P(E);\mathbf{Z}_2)$ is not equal to the homomorphism $H_1(K;\mathbf{Z}_2)\to H_1(P(E);\mathbf{Z}_2)$ induced by the spin structure of $E$.
\begin{definition}
Choose an odd framing on $K$. Using it,  count the number of right half twists that the odd framing of $K$ makes in a complete traverse with $F$. We denote this number by $\hat{q}_{\alpha}(v)$. This induces the map $\hat{q}_{\alpha}:H_1(F;\mathbf{Z}_2)\to \mathbf{Z}_4$. We call it the quadratic enhancement of a pin$^{-}$ structure $\alpha$.
\end{definition}
This number does not depend on the choice of the representative of a homology class, the orientation of $K$, and the odd framing of $N(F/K)\oplus N(E/F)$. In detail, see Kirby-Taylor \cite{kirby2psl} section 3. In particular, if $F$ is an orientable surface, this number is equal to twice the quadratic function induced by the spin structure on $F$. 
\begin{definition}
Let $F$ be a closed surface with its pin$^{-}$ strucure $\alpha$. Then, the Brown invarant $B_\alpha\in \mathbf{Z}_8$ of $\alpha$ is defined by the equation
\[
\sqrt{|H_1(F;\mathbf{Z}_2)|}\exp(2\pi\sqrt{-1}B_\alpha/8)=\sum_{x\in H_1(F;\mathbf{Z}_2)}\exp(2\pi\sqrt{-1}\hat{q}_\alpha(x)/4).
\]
\end{definition}
Consider a closed surface $F$ which represents $s\in H_2(M;\mathbf{Z}_2)$. Then, the surface $F$ has canonical pin$^{-}$ structure induced by the spin structure of the tubular neighborhood of $F$. Furthermore, the pin$^{-}$ bordism class of $F$ does not depend on the representative of $s\in H_2(M;\mathbf{Z}_2)$ (Kirby-Taylor \cite{kirby2psl} (4.8)). Denote the pin$^{-}$ bordism group by $\Omega_*^{pin^{-}}$. It is known that the Brown invariant gives the isomorphism $\Omega_2^{pin^{-}}\cong \mathbf{Z}_8$ (Kirby-Taylor\cite{kirby2psl} Lemma 3.6). For $\sigma,\sigma'\in\spin(M)$, Turaev\cite{turaev1984crl} showed that the difference $R(M,\sigma)-R(M,\sigma')$ is written by the Brown invariant of the pin$^{-}$ structure of an embedded surface. 
\begin{lemma}[Turaev \cite{turaev1984crl} Lemma 2.3]\label{Turaev}
Let $M$ be a closed manifold with its Stiefel-Whitney class $w_2=0$. Denote the closed surface $F\subset M$ which represents the Poincar\'{e} dual of $x\in H^1(M;\mathbf{Z}_2)$. For a spin structure $\sigma$ of $M$, denote the induced $pin^{-}$ structure $\alpha$ of $F$. Then we have
\[
R(M,\sigma)-R(M,\sigma+x)=2B_\alpha.
\]
\end{lemma}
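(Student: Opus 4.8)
The plan is to realize the difference $R(M,\sigma)-R(M,\sigma+x)$ as the signature, reduced mod $16$, of a single closed oriented $4$-manifold in which $F$ sits as a characteristic surface of vanishing self-intersection, and then to invoke the Guillou--Marin signature formula. First I would use the fact (recalled just above the definition of the Rochlin function) that every spin $3$-manifold bounds to choose compact oriented spin $4$-manifolds $V_0$ and $V_1$ spin bounding $(M,\sigma)$ and $(M,\sigma+x)$ respectively. Gluing them along their common boundary produces the closed oriented $4$-manifold $X:=V_0\cup_M\overline{V_1}$, where $\overline{V_1}$ carries the reversed orientation. By Novikov additivity of the signature and the definition of $R$, one gets $\Sign X=\Sign V_0-\Sign V_1\equiv R(M,\sigma)-R(M,\sigma+x)\pmod{16}$, so the left-hand side of the lemma is computed once $X$ is understood.

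Next I would identify $F$ as a characteristic surface of $X$ with $F\cdot F=0$. The spin structures coming from $V_0$ and $\overline{V_1}$ restrict along $M$ to $\sigma$ and $\sigma+x$, which differ exactly by $x$; the clutching obstruction to extending them across $M$ is then $w_2(X)=\delta x$, where $\delta\colon H^1(M;\mathbf{Z}_2)\to H^2(X;\mathbf{Z}_2)$ is the Mayer--Vietoris connecting homomorphism. Since $\delta$ is Poincar\'e dual to the inclusion-induced map $H_2(M;\mathbf{Z}_2)\to H_2(X;\mathbf{Z}_2)$, the surface $F=\mathrm{PD}(x)\subset M\subset X$ represents the Poincar\'e dual of $w_2(X)$, i.e. $F$ is characteristic. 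Moreover $F\cdot F=0$ because $F$ lies on the separating hypersurface $M$ and admits a normal pushoff into the collar that is disjoint from itself. The Guillou--Marin formula then yields $\Sign X\equiv F\cdot F+2\beta(F)=2\beta(F)\pmod{16}$, where $\beta(F)\in\mathbf{Z}_8$ is the Brown invariant of the pin$^-$ structure on $F$ induced by the spin structure on $X\setminus F$.

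The final, and hardest, step is to match this Guillou--Marin pin$^-$ structure with the structure $\alpha$ of the statement, which the paper defines from $\sigma$ via $E=TF\oplus\det F\cong TM|_F$. Away from $F$ the spin structures of $V_0$ and $\overline{V_1}$ agree, since they differ only by the class $x$ which is supported near $F$; hence they glue to precisely the spin structure on $X\setminus F$ used by Guillou--Marin. Near $F$ the normal bundle of $F$ in $X$ splits as $\nu_{F/M}\oplus\mathbf{R}$, with $\nu_{F/M}\cong\det F$ (both have first Stiefel--Whitney class $w_1(F)$) and the extra $\mathbf{R}$ the collar direction. Tracing the odd-framing and right-half-twist count of the paper's definition through this identification, the Guillou--Marin quadratic enhancement on $H_1(F;\mathbf{Z}_2)$ should agree with $\hat q_\alpha$, so that the Brown invariants coincide, $\beta(F)=B_\alpha$. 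Combining this with the signature computation gives $R(M,\sigma)-R(M,\sigma+x)=2B_\alpha\pmod{16}$, as claimed.

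I expect the main obstacle to be exactly this local comparison of enhancements: one must verify that the normal-framing and orientation conventions of the two constructions line up, which is where the bookkeeping of the extra collar summand and the identification $\nu_{F/M}\cong\det F$ must be done with care. As a consistency check I would compare both sides on a generator of $\Omega_2^{pin^-}\cong\mathbf{Z}_8$, for instance $\mathbf{RP}^2$, whose pin$^-$ bordism class realizes all eight values of the Brown invariant and for which $F\cdot F=0$ can be arranged; since (by the Kirby--Taylor well-definedness recalled above) both sides depend only on the pin$^-$ bordism class of $(F,\alpha)$, agreement on the generator together with the signature identity confirms the formula in all cases.
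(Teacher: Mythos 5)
The paper offers no proof of this lemma: it is quoted verbatim as Lemma 2.3 of Turaev \cite{turaev1984crl}, so there is no internal argument to compare against. Your outline is the standard (and essentially Turaev's) proof: glue spin fillings $V_0\cup_M\overline{V_1}$, use Novikov additivity, identify $w_2(X)=\delta x$ via the Mayer--Vietoris coboundary so that $F$ becomes a characteristic surface with $F\cdot F=0$ (collar pushoff), and apply the Guillou--Marin formula; each of these steps is correct, and you rightly isolate the one genuinely delicate point, namely matching the Guillou--Marin pin$^-$ structure descended from the spin structure on $X\setminus F$ with the structure $\alpha$ obtained by restricting $\sigma$ to $TM|_F\cong TF\oplus\det F$ --- that comparison (via $\nu_{F/X}\cong\nu_{F/M}\oplus\mathbf{R}$ and $\nu_{F/M}\cong\det F$, checking that the glued structure is odd on meridians because the difference class $x$ is the Thom class of $F$) is exactly where the remaining work lies, and your sketch of it is the right one. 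One small correction to your consistency check: $\mathbf{RP}^2$ carries only two pin$^-$ structures, with Brown invariants $\pm1$; these generate $\Omega_2^{pin^{-}}\cong\mathbf{Z}_8$ but do not themselves realize all eight values, and the bordism-invariance of the left-hand side (needed to reduce to generators) is itself a statement requiring the gluing argument you already set up, so the check is circular as a substitute for, though fine as a sanity test of, the main proof.
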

Apply the lemma to the case when $M$ is a mapping torus. Then, we obtain $\beta_{\sigma,x}(\varphi)=B_\alpha\in \mathbf{Z}_8$, for $\varphi\in\mathcal{M}_{g,1}[2]$ and $x\in H_1(\Sigma_g;\mathbf{Z}_2)$.
\subsection{Heap's homomorphism}\label{Heap}
In this subsection, we review the homomorphism $\omega_{\sigma,y}:\mathcal{I}_{g,1}\to \mathbf{Z}_2$ defined by Heap\cite{heap2005bim}, and show that the homomorphism $\beta_{\sigma,x}$ defined in Subsection \ref{rochlin} is the extension of $\omega_{\sigma,y}$ to the level 2 mapping class group. 

First we define a spin manifold $M'=M'_\varphi$. For $\sigma\in \spin(\Sigma_g)$ and $\varphi\in\mathcal{M}_{g,1}[2]$, endow the spin structure $\theta(\sigma)$ on the mapping torus $M:=M_\varphi$. Denote by $M'=(M-N(c_0)\times S^1)\cup (\partial N(c_0)\times D^2)$ the manifold obtained by the elementary surgery on $N(c_0)\times S^1\subset M$. We can choose the spin structure of $\partial N(c_0)\times D^2$ so that it induces in the boundary $\partial N(c_0)\times S^1$ the spin structure induced by $\theta(\sigma)$. Hence, the elementary surgery is compatible with the spin structure, and $M'$ has the induced spin structure.

Next, we define Heap's homomorphism $\omega_{\sigma,y}$. For a group $G$, denote the spin bordism group of $K(G,1)$-space by $\Omega_*^{spin}(G):=\Omega_*^{spin}(K(G,1))$. Note that if $\varphi\in\mathcal{I}_{g,1}$, we have $H_1(\Sigma_g;\mathbf{Z})\cong H_1(M';\mathbf{Z})$. Hence, we have the canonical homomorphism $\pi_1(M')\to H_1(\Sigma_g;\mathbf{Z})$. Let $f:M'\to K(H_1(\Sigma_g;\mathbf{Z}),1)$ be a continuous map corresponding to this homomorphism. This map induces the homomorphism
\[
\eta_{\sigma,2}:\mathcal{I}_{g,1}\to\Omega_3^{spin}(H_1(\Sigma_g;\mathbf{Z})),
\]
which maps $\varphi\in \mathcal{I}_{g,1}$ to $[(f,M')]$. In the same fashion, if $\varphi\in\mathcal{M}_{g,1}[2]$, we have $H_1(\Sigma_g;\mathbf{Z}_2)\cong H_1(M';\mathbf{Z}_2)$. Let
\[
\eta_{\sigma,2}[2]:\mathcal{M}_{g,1}[2]\to\Omega_3^{spin}(H_1(\Sigma_g;\mathbf{Z}_2))
\]
be the homomorphism induced by $\pi_1(M')\to H_1(\Sigma_g;\mathbf{Z}_2)$. 

For $y\in H^1(\Sigma_g;\mathbf{Z})=\Hom(H_1(\Sigma_g;\mathbf{Z}),\mathbf{Z})$, we have the commutative diagram
\[
\begin{CD}
H_1(\Sigma_g;\mathbf{Z})@>y>>\mathbf{Z}\\
@V\mod 2VV@VV\mod 2V\\
H_1(\Sigma_g;\mathbf{Z}_2)@>y\mod2>>\mathbf{Z}_2.
\end{CD}
\]
which induces the commutative diagram
\[
\begin{CD}
\mathcal{I}_{g,1}@>\eta_{\sigma,2}>> \Omega_3^{spin}(H_1(\Sigma_g;\mathbf{Z}))@>y_*>>\Omega_3^{spin}(\mathbf{Z})\cong\mathbf{Z}_2\\
@VVV@VVV@VVV\\
\mathcal{M}_{g,1}[2]@>\eta_{\sigma,2}[2]>> \Omega_3^{spin}(H_1(\Sigma_g;\mathbf{Z}_2))@>(y\mod 2)_*>>\Omega_3^{spin}(\mathbf{Z}_2)\cong\mathbf{Z}_8.
\end{CD}
\]
Then, Heap's homomorphism 
\[
\omega_{\sigma,y}:\mathcal{I}_{g,1}\to\mathbf{Z}_2
\]
is defined by $\omega_{\sigma,y}=y_* \eta_{\sigma,2}$ for $y\in H_1(\Sigma_g;\mathbf{Z})$. 
\begin{lemma}
For $\sigma\in \spin(\Sigma_g)$, $y\in H_1(\Sigma_g;\mathbf{Z})$ and $\psi\in \mathcal{I}_{g,1}$,
\[
\beta_{\sigma,y\mod2}(\psi)=4\omega_{\sigma,y}(\psi)\in\mathbf{Z}_8.
\]
\end{lemma}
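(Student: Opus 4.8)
The plan is to show that the two homomorphisms agree on $\mathcal{I}_{g,1}$ by tracing both through the bordism-theoretic description and matching them via Turaev's formula together with the known identification of $\Omega_3^{spin}(\mathbf{Z}_2)\cong\mathbf{Z}_8$. The key is that both $\beta_{\sigma,y\bmod 2}$ and $\omega_{\sigma,y}$ are ultimately computed from the same geometric data, namely the mapping torus $M_\psi$ (or its surgered version $M'$) equipped with the spin structure $\theta(\sigma)$ and a cohomology class detecting the homology direction $y$.

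First I would recall that, by the discussion at the end of Subsection~\ref{brown invariant}, the value $\beta_{\sigma,y\bmod 2}(\psi)$ equals the Brown invariant $B_\alpha$ of the $pin^-$ structure on the surface $F\subset M_\psi$ Poincar\'e dual to $y\bmod 2\in H^1(M_\psi;\mathbf{Z}_2)$, where here $y\bmod 2$ is pulled back along $H^1(\Sigma_g;\mathbf{Z}_2)\hookrightarrow H^1(M_\psi;\mathbf{Z}_2)$ coming from the Wang splitting. For $\psi\in\mathcal{I}_{g,1}$ the surface $F$ can be taken to be a fiber-like surface carrying the spin structure restricted from $\theta(\sigma)$; since $\psi$ acts trivially on all of $H_1(\Sigma_g;\mathbf{Z})$, the enhancement $\hat q_\alpha$ on $F$ is the one induced by an honest spin structure, so by the remark after the definition of the quadratic enhancement, $\hat q_\alpha$ takes only the even values $0,2\in\mathbf{Z}_4$. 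I would then evaluate the Brown invariant Gauss sum on such an even enhancement: an enhancement valued in $2\mathbf{Z}_4\cong\mathbf{Z}_2$ corresponds to a genuine spin/Arf datum, and its Brown invariant lands in $4\mathbf{Z}_8$, with $B_\alpha=4\cdot(\text{Arf-type }\mathbf{Z}_2\text{ invariant})$.

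Next I would identify that $\mathbf{Z}_2$-valued invariant with $\omega_{\sigma,y}(\psi)$. The cleanest route is through the commutative diagram already displayed in Subsection~\ref{Heap}: the class $\eta_{\sigma,2}(\psi)\in\Omega_3^{spin}(H_1(\Sigma_g;\mathbf{Z}))$ pushes forward under $y_*$ to $\Omega_3^{spin}(\mathbf{Z})\cong\mathbf{Z}_2$, defining $\omega_{\sigma,y}$, while the bottom row pushes $\eta_{\sigma,2}[2](\psi)$ forward to $\Omega_3^{spin}(\mathbf{Z}_2)\cong\mathbf{Z}_8$. I would argue that under the isomorphism $\Omega_3^{spin}(\mathbf{Z}_2)\cong\mathbf{Z}_8$ the bottom-right push-forward of $\psi$ computes exactly the Brown invariant $B_\alpha$ above, because the surgered mapping torus $M'$ with its map to $K(\mathbf{Z}_2,1)$ and spin structure bounds in the same way the Brown invariant is read off from the Poincar\'e-dual surface $F$; this is precisely the content of Turaev's Lemma~\ref{Turaev} reinterpreted bordism-theoretically. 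The vertical $\mod 2$ map $\Omega_3^{spin}(\mathbf{Z})\cong\mathbf{Z}_2\to\Omega_3^{spin}(\mathbf{Z}_2)\cong\mathbf{Z}_8$ in the right column is multiplication by $4$, so chasing $\psi$ around the square gives $B_\alpha=4\,\omega_{\sigma,y}(\psi)$, which is the claim.

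The main obstacle I anticipate is the final identification: that the push-forward of $\eta_{\sigma,2}[2](\psi)$ into $\Omega_3^{spin}(\mathbf{Z}_2)\cong\mathbf{Z}_8$ really is the Brown invariant $B_\alpha$ of the embedded surface, and that the right-hand vertical map $\mathbf{Z}_2\to\mathbf{Z}_8$ is genuinely the multiplication-by-$4$ inclusion rather than some other nonzero homomorphism. This requires pinning down the generator of $\Omega_3^{spin}(\mathbf{Z}_2)$ and its image under the two structure maps, and checking compatibility of the spin structure on $M'$ with the $pin^-$ structure on $F$ used by Turaev; I expect to handle it by testing on the standard generator (the mapping torus of a BSCC twist or a bounding-pair map) where both $\omega_{\sigma,y}$ and $B_\alpha$ can be computed explicitly, thereby fixing the normalization and confirming the factor $4$.
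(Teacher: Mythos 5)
Your proposal follows essentially the same route as the paper: it identifies $\beta_{\sigma,y\bmod 2}(\psi)$ with the image of $\eta_{\sigma,2}[2](\psi)$ under $(y\bmod 2)_*$ in $\Omega_3^{spin}(\mathbf{Z}_2)\cong\mathbf{Z}_8$ via Turaev's lemma and the Brown invariant of the Poincar\'e-dual surface, and then uses the commutative diagram together with the fact that on an orientable dual surface the quadratic enhancement is twice the spin quadratic function, so that the map $\Omega_3^{spin}(\mathbf{Z})\cong\mathbf{Z}_2\to\Omega_3^{spin}(\mathbf{Z}_2)\cong\mathbf{Z}_8$ is multiplication by $4$ (i.e.\ $B_\alpha=4\operatorname{Arf}$). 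This is precisely the paper's argument, with your proposed generator check serving only as an optional confirmation of the normalization the paper obtains directly from the Gauss-sum definition of the Brown invariant.
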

\begin{proof}
First, we explain the isomorphisms $\Omega_3^{spin}(\mathbf{Z})\cong \mathbf{Z}_2$ and $\Omega_3^{spin}(\mathbf{Z}_2)\cong \mathbf{Z}_8$ in more detail.

For an $[(f,M')]\in \Omega_3^{spin}(S^1)$, choose a closed oriented surface $F_y\subset M'$ which represents the Poincar\'{e} dual of the pullback $y:=f^*c\in H^1(M;\mathbf{Z})$ of a generator $c\in H^1(S^1;\mathbf{Z})$. Then $F_y$ has the spin structure $\sigma_y$ induced by $\sigma\in\spin(M)$. For an oriented compact spin surface $F$, denote by $\Arf(\sigma)$ the Arf invariant of $\sigma\in \spin(F)$. By the Atiyah-Hirzebruch spectral sequence, the homomorphism
\[
\Omega_3^{spin}(\mathbf{Z})\cong\Omega_2^{spin}\cong \mathbf{Z}_2
\]
defined by $[(f,M')]\mapsto \Arf(\sigma_y)$ is isomorphic. This homomorphism does not depend on the choice of the generator $c$. 

Similarly, for $[(f,M')]\in\Omega_3^{spin}(\mathbf{Z}_2)$, choose a closed surface $F_x\subset M'$ which represents the Poincar\'{e} dual of $x:=f^*w_1\in H^1(M';\mathbf{Z}_2)$ of the Stiefel-Whitney class $\omega_1\in H^1(\mathbf{RP}^{\infty};\mathbf{Z}_2)$. Then, $F_x\subset M'$ has the pin$^{-}$ structure $\alpha_x$ induced from the spin structure of $M'$. Then, there is an well-known isomorphism
\[
\Omega_3^{spin}(\mathbf{Z}_2)\to\Omega_2^{pin^{-}}
\]
given by $[(M',f)]\mapsto [F_x,\alpha_x]$. Under the isomorphism
\[
\Omega_2^{pin^{-}}\cong \mathbf{Z}_8,
\]
$[F_x,\alpha_x]$ maps to $B_{\alpha_x}$. 

Next, we prove $\beta_{\sigma,x}(\varphi)=x_*\eta_{\sigma,2}[2](\varphi)$ for $\varphi\in \mathcal{M}_{g,1}[2]$ and $x\in H_1(\Sigma_g;\mathbf{Z}_2)$. Consider $x$ as an element of $H^1(M_\varphi;\mathbf{Z}_2)$ under the inclusion $H_1(\Sigma_g;\mathbf{Z}_2)\cong H^1(\Sigma_g;\mathbf{Z}_2)\cong H^1(M_\psi;\mathbf{Z}_2)$. We can choose a surface $F_x\subset M-(N(c_0)\times S^1)$ which represents the Poincar\'{e} dual of $x$ with a pin$^{-}$ structure $\alpha_x$. Then, we have
\[
\beta_{\sigma,x}(\varphi)=B_{\alpha_x}=x_*\eta_{\sigma,2}[2](\varphi).
\]

By the definition of the Brown invariant, we see that the homomorphism $\Omega_3^{spin}(\mathbf{Z})\to\Omega_3^{spin}(\mathbf{Z}_2)\cong \mathbf{Z}_8$ is written by 4 times the Arf invariant of a spin structure of the surface $F_y$. By the commutative diagram, we have $\beta_{\sigma,y\mod2}(\psi)=4\omega_{\sigma,y}(\psi)\in\mathbf{Z}_8$ for $\psi\in\mathcal{I}_{g,1}$ and $y\in H_1(\Sigma_g;\mathbf{Z})$.
\end{proof}
\subsection{The value of $\beta_{\sigma,x}$}\label{value}
Humphries (\cite{humphries1992ncp} p.314 Proposition 2.1) shows that the level 2 mapping class group $\mathcal{M}_{g,r}[2]$ is generated by the square of the Dehn twists along all non-separating simple closed curve when $g\ge3$. We will compute the value of the homomorphism $\beta_\sigma$ defined in Subsection \ref{rochlin} on the generators of $\mathcal{M}_{g,1}[2]$, using the Brown invariant. For $x\in H_1(\Sigma_g;\mathbf{Z}_2)$, define the map $i_x:H_1(\Sigma_g;\mathbf{Z}_2)\to\mathbf{Z}_8$ by
\[
i_x(y)=
\begin{cases}
1 & \text{ if } x\cdot y\equiv 1 \mod2,\\
0 & \text{ if } x\cdot y\equiv 0 \mod2.
\end{cases}
\]
Note that this is not a homomorphism. For $\sigma\in\spin(\Sigma_g)$, denote by $q_{\sigma}:H_1(\Sigma_g;\mathbf{Z}_2)\to\mathbf{Z}_2$ the quadratic function of $\sigma$.
\begin{proposition}\label{value of beta}
Let $C$ be a non-separating simple closed curve in $\Sigma_g-N(c_0)$. Then we have
\[
\beta_\sigma(t_{C}^2)=(-1)^{q_\sigma(C)}i_{[C]}\in \Map(H_1(\Sigma_g;\mathbf{Z}_2),\mathbf{Z}_8).
\]
\end{proposition}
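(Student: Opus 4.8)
The plan is to evaluate the function $\beta_\sigma(t_C^2)\in\Map(H_1(\Sigma_g;\mathbf{Z}_2),\mathbf{Z}_8)$ pointwise, using the identity $\beta_\sigma(t_C^2)(y)=\beta_{\sigma,y}(t_C^2)=B_{\alpha_y}$ established at the end of Subsection \ref{brown invariant} via Lemma \ref{Turaev}. Here $\alpha_y$ is the pin$^{-}$ structure induced by $\theta(\sigma)$ on a closed surface $F_y\subset M_{t_C^2}-(N(c_0)\times S^1)$ Poincar\'e dual to $y\in H^1(M_{t_C^2};\mathbf{Z}_2)$, exactly as in the surface $F_x$ of Subsection \ref{Heap}. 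Thus the whole computation reduces to choosing a convenient $F_y$ and reading off its Brown invariant. Since the monodromy $t_C^2$ is supported in an annular neighborhood $A$ of $C$, and by hypothesis $C\subset\Sigma_g-N(c_0)$, I would pick a simple closed curve $c_y\subset\Sigma_g-N(c_0)$ representing the Poincar\'e dual of $y$ and transverse to $C$ with $|c_y\cap C|$ minimal, so that $|c_y\cap C|\equiv [C]\cdot y\pmod 2$ equals $0$ or $1$. Outside $A\times S^1$ the surface $F_y$ is the product $c_y\times S^1$; inside the twist region it is the trace of $c_y$ under an isotopy from the identity to $t_C^2$, closed up in $A$. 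This gives a closed representative of the Poincar\'e dual of $y$ carrying the pin$^{-}$ structure $\alpha_y$ induced by $\theta(\sigma)$.

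In the case $[C]\cdot y=0$ the curve $c_y$ can be pushed off $C$ entirely, so $F_y=c_y\times S^1$ is an embedded torus disjoint from the twist region, and its pin$^{-}$ structure is the (orientable) spin structure inherited from $\theta(\sigma)$. On an orientable surface the quadratic enhancement is twice the $\mathbf{Z}_2$-quadratic form of the spin structure, so by the Gauss-sum definition $B_{\alpha_y}=4\Arf$. The point is that the framing $\hat l$ used to build $\theta(\sigma)$ in Subsection \ref{subsec:spin boundary} gives the $S^1$-direction the bounding spin structure (this is precisely the compatibility with the surgery $N(c_0)\times D^2$ in Subsection \ref{Heap}), so $q_{\theta(\sigma)}([S^1])=0$; on the torus $H_1(F_y;\mathbf{Z}_2)=\langle[c_y],[S^1]\rangle$ with $[c_y]\cdot[S^1]=1$, whence $\Arf=q([c_y])\,q([S^1])=0$ and $B_{\alpha_y}=0$. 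This matches $i_{[C]}(y)=0$.

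In the case $[C]\cdot y=1$ the curve $c_y$ meets $C$ once, so the trace of $c_y$ through the double twist forces $F_y$ to be non-orientable; a direct analysis shows it acquires a single crosscap whose core is a push-off of $C$ inside $F_y$. Since the Brown invariant is a pin$^{-}$ bordism invariant and the only odd values in $\Omega_2^{pin^{-}}\cong\mathbf{Z}_8$ are $\pm1$, realized by the two pin$^{-}$ structures on $\mathbf{RP}^2$, it suffices to show $[F_y,\alpha_y]=\pm[\mathbf{RP}^2]$ and to determine the sign. I would pin the sign down by applying the ``count the right half-twists of an odd framing'' recipe of Subsection \ref{brown invariant} to the crosscap class: the crosscap itself contributes the fixed $\mathbf{RP}^2$-generator half-twist, while the ambient spin structure $\sigma$ of $\Sigma_g$ twists the normal framing along $C$ by exactly $q_\sigma(C)$. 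This yields $\hat q_{\alpha_y}(\text{core})=1+2q_\sigma(C)\in\mathbf{Z}_4$, and the Gauss sum over $H_1(F_y;\mathbf{Z}_2)=\mathbf{Z}_2$ then gives $B_{\alpha_y}=(-1)^{q_\sigma(C)}$, matching $i_{[C]}(y)=1$ with the asserted sign.

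The main obstacle is this second case: identifying the induced pin$^{-}$ structure $\alpha_y$ on the non-orientable surface $F_y$ and separating the contribution of the crosscap created by the double Dehn twist from the contribution of the ambient spin structure $\sigma$. Obtaining the precise sign $(-1)^{q_\sigma(C)}$ requires carefully comparing the odd framing on the crosscap core with the framing $\theta(\sigma)$ on $M_{t_C^2}$ and tracking how $\sigma$ rotates it as one traverses $C$; once this comparison is made, the remaining steps are routine bookkeeping with the Gauss-sum definition of the Brown invariant.
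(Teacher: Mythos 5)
Your overall framework agrees with the paper's: evaluate $\beta_\sigma(t_C^2)$ pointwise as the Brown invariant $B_{\alpha_y}$ of a surface dual to $y$ via Lemma \ref{Turaev}, and split into the cases $[C]\cdot y=0$ and $[C]\cdot y=1$. Your first case is essentially the paper's computation ($F'=C_x\times S^1$, $\hat q(y')=0$, Brown invariant $0$). The genuine gap is in the second case: the closed surface obtained by closing up the trace of $c_y$ through the double twist is \emph{not} a projective plane, and $H_1(F_y;\mathbf{Z}_2)$ is not $\mathbf{Z}_2$. Outside $N(C)\times S^1$ your surface already contains the annulus $\bigl(c_y\cap(\Sigma_g-\Int N(C))\bigr)\times S^1$, and the piece inside $N(C)\times S^1$ is a compact surface with two boundary circles, hence of nonpositive Euler characteristic; so $\chi(F_y)\le 0$, whereas a closed surface with $H_1=\mathbf{Z}_2$ has $\chi=1$. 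In the paper's model ($C=C_1$, $y=A_1+B_1$) the surface is $F=F_1\cup F_2$ with $F_1$ a M\"obius band with a hole, and $H_1(F;\mathbf{Z}_2)\cong\mathbf{Z}_2^3$ generated by $x=A_1$, $y=B_1$ (your crosscap core, a push-off of $C$), and $z=[S^1]$. So your ``Gauss sum over $H_1(F_y;\mathbf{Z}_2)=\mathbf{Z}_2$'' is taken over the wrong group.

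The bordism shortcut you invoke to excuse this does not hold either: the odd elements of $\Omega_2^{pin^{-}}\cong\mathbf{Z}_8$ are $\pm1$ \emph{and} $\pm3$, not just $\pm1$, so even granting that $B_{\alpha_y}$ is odd (which you did not establish, though it does follow because the Gauss sum lies in $\mathbf{Z}[\sqrt{-1}]$ and has modulus $2^{3/2}$), you cannot conclude $[F_y,\alpha_y]=\pm[\mathbf{RP}^2]$ without an actual computation. One must evaluate $\hat q_\alpha$ on all seven nonzero classes, as the paper does: $\hat q_\alpha(x)=-1+2q_\sigma(A_1)$, $\hat q_\alpha(y)=1+2q_\sigma(B_1)$, $\hat q_\alpha(z)=0$, and so on; in the resulting sum the four terms involving $x$ cancel in pairs, which is exactly what removes the a priori dependence on $q_\sigma(A_1)$ (the transverse direction) and leaves $2+2\sqrt{-1}(-1)^{q_\sigma(B_1)}$, i.e.\ $B_\alpha=(-1)^{q_\sigma(C)}$. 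Your single-class shortcut silently assumes this cancellation. Note also that the paper sidesteps your appeal to minimal-position representatives for every $y$ (a claim that itself needs proof): it computes only the value at $A_1+B_1$, proves vanishing for all $y$ with $y\cdot[C_1]=0$, and then reaches every $y$ with $y\cdot[C_1]=1$ via the identity $\beta_{\sigma,A_1+x}(t_{C_1}^2)=\beta_{\sigma,A_1}(t_{C_1}^2)+\beta_{\sigma+A_1,x}(t_{C_1}^2)$ together with equivariance under $\mathcal{M}_{g,1}$, reducing the whole proposition to one explicit Brown invariant.
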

\begin{proof}
We denote the symplectic basis $\{A_i,B_i\}_{i=1}^g$ represented by the simple closed curves in Figure \ref{fig: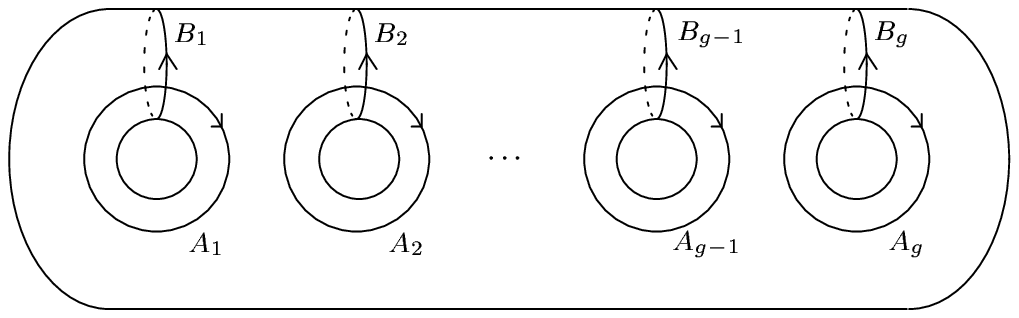}.
\begin{figure}[h]
  \begin{center}
    \includegraphics{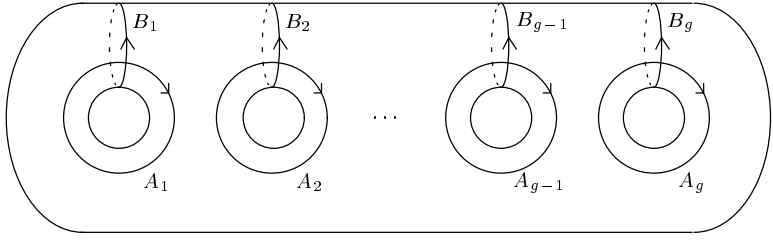}
  \end{center}
  \caption{the symplectic basis}
  \label{fig:symplectic.eps}
\end{figure}
Choose the oriented simple closed curves $C_1$ and $C_2$ as described in Figure \ref{fig: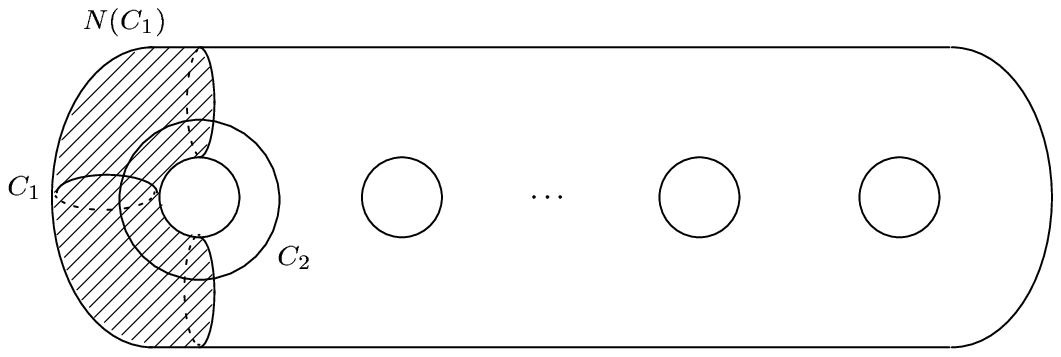}.
\begin{figure}[h]
  \begin{center}
    \includegraphics{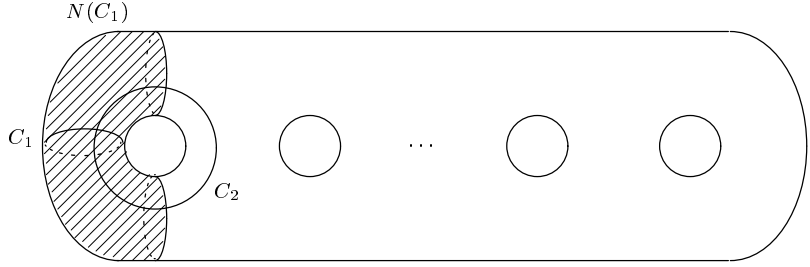}
  \end{center}
  \caption{the neighborhood $N(C_1)$}
  \label{fig:curve1.eps}
\end{figure}
For any non-separating simple closed curve $C$, if we choose a mapping class $\varphi\in \mathcal{M}_{g,1}$ such that $\varphi(C_1)=C$, we have
\begin{align*}
\beta_{\sigma,x}(t_{C}^2)&=\beta_{\sigma,x}(\varphi t_{C_1}^2\varphi^{-1})\\
&=(R(M_{\varphi t_{C_1}^2\varphi^{-1}},\theta(\sigma))-R(M_{\varphi t_{C_1}^2\varphi^{-1}},\theta(\sigma+x)))/2\\
&=(R(M_{t_{C_1}^2},\theta(\varphi^*\sigma))-R(M_{t_{C_1}^2},\theta(\varphi^*\sigma+\varphi_*^{-1}(x))))/2\\
&=\beta_{\varphi^*\sigma,\varphi_*^{-1}(x)}(t_{C_1}^2).
\end{align*}
Hence It suffices to show that $\beta_\sigma(t_{C_1}^2)=(-1)^{q_\sigma(C_1)}i_{[C_1]}$. Let $M:=M_{t_{C_1}^2}$ denote a mapping torus. 

First, we calculate the value $\beta_{\sigma, A_1+B_1}(t_{C_1}^2)$. Consider the compact 3-manifold $M_1:=N(C_1)\times I/\sim\,\subset M$.  Choose the compact surface $F_1\subset M_1$ as shown in Figure \ref{fig: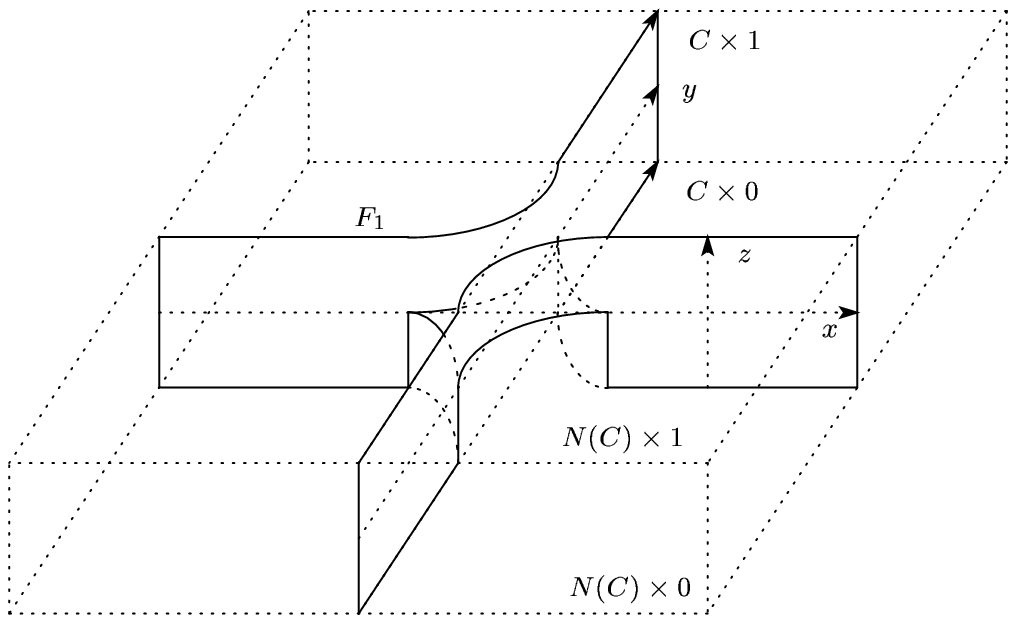}. For the arc $r=C_2\cap (\Sigma_g-\Int N(C_1))$ as in Figure \ref{fig:curve1.eps}, denote another subsurface $F_2:=r\times S^1\subset M$. Then, the surface $F:=F_1\cup F_2$ represents the Poincar\'{e} dual of the homology class $A_1+B_1\in H_1(\Sigma_g;\mathbf{Z}_2)=H^1(\Sigma_g;\mathbf{Z}_2)\subset H^1(M;\mathbf{Z}_2)$. Let $\alpha$ denote the pin$^{-}$ structure of $F$ induced by the spin structure of $M$.
\begin{figure}[h]
  \begin{center}
    \includegraphics{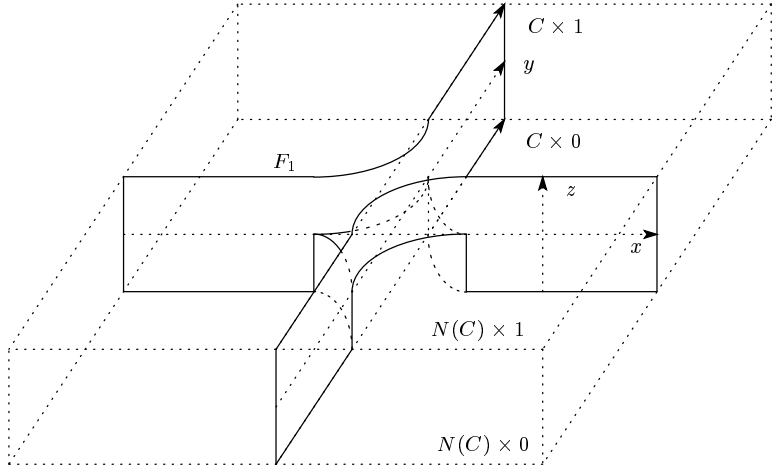}
  \end{center}
  \caption{the surface $F_1\subset M_1$}
  \label{fig:pin.eps}
\end{figure}
By Lemma \ref{Turaev} proved by Turaev, the value $\beta_{\sigma,A_1+B_1}(t_{C_1}^2)$ equals to the Brown invariant $B_\alpha$. Hence we investigate the quadratic enhancement $\hat{q}_\alpha:H_1(F;\mathbf{Z}_2)\to \mathbf{Z}_4$ of the pin$^{-}$ structure of $F$.  Pick the generator $x,y,z$ of $H_1(F;\mathbf{Z}_2)=\mathbf{Z}_2\oplus\mathbf{Z}_2\oplus\mathbf{Z}_2$ as in Figure  \ref{fig:pin.eps}. We may assume $x=A_1, y=B_1, z=[S^1]\in H_1(M;\mathbf{Z})$. Then, we have 
\begin{gather*}
\hat{q}_\alpha(x)=-1+2q_\sigma(A_1),\ \hat{q}_\alpha(y)=1+2q_\sigma(B_1),\ \hat{q}_\alpha(z)=0,\\
\hat{q}_\alpha(x+y)=2+2q_\sigma(A_1)+2q_\sigma(B_1),\ \hat{q}_\alpha(y+z)=1+2q_\sigma(B_1),\ \hat{q}_\alpha(z+x)=1+2q_\sigma(A_1),\\
\hat{q}_\alpha(x+y+z)=2q_\sigma(A_1)+2q_\sigma(B_1).
\end{gather*}
Hence the Brown invariant $B_\alpha$ satisfies
\begin{align*}
\sqrt{|H_1(F;\mathbf{Z}_2)|}\exp(2\pi\sqrt{-1}B_\alpha/8)&=\sum_{x\in H_1(F;\mathbf{Z}_2)}\exp(2\pi\sqrt{-1}\hat{q}_\alpha(x)/4)\\
&=2\exp(2\pi\sqrt{-1}(2q_\sigma(B_1)+1)/4)+2.
\end{align*}
Hence we have $\beta_{\sigma,A_1+B_1}(t_{C_1}^2)=B_\alpha=(-1)^{q_\sigma(B_1)}$. 

Next, we show that $\beta_{\sigma, x}(t_{C_1}^2)=0$ for $x\in \br{B_1, A_2, B_2\cdots, A_g,B_g}$. Choose the simple closed curve $C_x\subset \Sigma_g-N(C_1)$ which represents $x\in H_1(\Sigma_g;\mathbf{Z}_2)$. Denote the subsurface $F':=C_x\times S^1\subset M$. This subsurface represents the Poincar\'e dual of $x\in H_1(\Sigma_g;\mathbf{Z}_2)=H^1(\Sigma_g;\mathbf{Z}_2)\subset H^1(M;\mathbf{Z}_2)$ in $M$.  Choose a generator $x':=[C_x\times0]$, $y':=[c\times S^1]$ of $H_1(F';\mathbf{Z}_2)=\mathbf{Z}_2\oplus\mathbf{Z}_2$, where $c\in C_x$ is a point. Then, $F'$ is orientable and the spin structure of $F'$ is induced by that of $M$. We denote this spin structure by $\sigma'$. Since spin group naturally injects into pin$^{-}$ group, we can consider $\sigma'$ as the pin$^{-}$ structure of $F'$. Then, the quadratic enhancement $\hat{q}_{\sigma'}$ is equal to twice the quadratic function $q_{\sigma'}$. Hence we have
\[
\hat{q}_{\sigma'}(x')=2q_{\sigma'}(x)=2q_{\sigma}(x),\ \hat{q}_{\sigma'}(y')=0.
\]
This shows that $\beta_{\sigma,x}(t_{C_1}^2)=B_{\sigma'}=0$.

Finally, we prove $\beta_{\sigma, A_1+x}(t_{C_1}^2)=\beta_{\sigma, A_1}(t_{C_1}^2)$. We have 
\begin{align*}
\beta_{\sigma,A_1+x}(t_{C_1}^2)&=(R(M,\sigma)-R(M,\sigma+A_1+x))/2\\
&=(R(M,\sigma)-R(M,\sigma+A_1))/2+(R(M,\sigma+A_1)-R(M,\sigma+A_1+x))/2\\
&=\beta_{\sigma,A_1}(t_{C_1}^2)+\beta_{\sigma+A_1,x}(t_{C_1}^2).
\end{align*}
Since we have $\beta_{\sigma,A_1}(t_{C_1}^2)=0$, it follows that $\beta_{\sigma,A_1+x}(t_{C_1}^2)=\beta_{\sigma,A_1}(t_{C_1}^2)$. 

Thus, for all $x\in H_1(\Sigma_g;\mathbf{Z}_2)$, we have
\[
\beta_{\sigma, x}(t_{C_1}^2)=(-1)^{q_\sigma(C_1)}i_{[C_1]}(x).
\]
\end{proof}
\newpage
\section{Proof of Theorem \ref{abel-mcg}}\label{proof main theorem}
We calculate the order of the homology group $H_1(\mathcal{M}_{g,1}[2];\mathbf{Z})$ in Subsections \ref{upper bound} and \ref{lower bound}. Using these results, we will prove Theorem \ref{abel-mcg}. We also determine the abelianization of the level 2 mapping class group for closed surfaces. 
\subsection{A homomorphism $\Phi:\mathbf{Z}[S_d]\to H_1(\mathcal{M}_{g,r}[d];\mathbf{Z})$}
For a module $K=\mathbf{Z}, \mathbf{Z}_d$, we denote by $H_1(\Sigma_{g,r};K)^{pri}$ the set of primitive elements in $H_1(\Sigma_{g,r};K)$. Let
\[
S_d:=H_1(\Sigma_{g,r};\mathbf{Z}_d)^{pri}/\{\pm1\}.
\]
In this subsection, we define the homomorphism $\Phi:\mathbf{Z}[S_d]\to H_1(\mathcal{M}_{g,r}[d];\mathbf{Z})$. In particular, this homomorphism is surjective when $d=2$.  

The level $d$ mapping class group acts on the set of isotopy classes of non-separating simple closed curves. We will prove that $S_d$ corresponds bijectively to the orbit space of this action. Note that any element of $H_1(\Sigma_{g,r};\mathbf{Z})^{pri}$ is known to be represented by a simple closed curve. 
\begin{lemma}\label{prim surj}
The $\mod d$ reduction homomorphism
$H_1(\Sigma_{g,r};\mathbf{Z})^{pri}\to H_1(\Sigma_{g,r};\mathbf{Z}_d)^{pri}$ is surjective. 
\end{lemma}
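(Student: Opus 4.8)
The plan is to translate the statement into an elementary lifting problem for unimodular vectors and then solve it by adjusting a single coordinate via the Chinese Remainder Theorem.

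First I would fix the identifications $H_1(\Sigma_{g,r};\mathbf{Z})\cong\mathbf{Z}^{2g}$ and $H_1(\Sigma_{g,r};\mathbf{Z}_d)\cong\mathbf{Z}_d^{2g}$ coming from the symplectic basis $\{A_i,B_i\}_{i=1}^g$; for $r=0,1$ the integral homology is free of rank $2g\ge 2$. Under these identifications the primitive elements are exactly the unimodular vectors: a vector $w=(w_1,\dots,w_{2g})\in\mathbf{Z}^{2g}$ is primitive iff $\gcd(w_1,\dots,w_{2g})=1$, and $\bar v\in\mathbf{Z}_d^{2g}$ is primitive iff $\gcd(v_1,\dots,v_{2g},d)=1$ for any integral representatives $v_i$. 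With this dictionary the lemma becomes the assertion that every $\bar v$ with $\gcd(v_1,\dots,v_{2g},d)=1$ admits an integral lift $w\equiv\bar v\pmod d$ with $\gcd(w_1,\dots,w_{2g})=1$. Well-definedness of the map on primitive sets is automatic, since $\gcd(w_i)=1$ forces $\gcd(w_i,d)=1$, so only surjectivity requires work.

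Next I would choose an arbitrary integral lift $w=(w_1,\dots,w_{2g})$ of $\bar v$, so that $\gcd(w_1,\dots,w_{2g},d)=1$, and modify only the first coordinate. Set $h:=\gcd(w_2,\dots,w_{2g})$; I claim one can choose $k\in\mathbf{Z}$ so that $w_1':=w_1+dk$ is coprime to $h$. For each prime $p\mid h$ there are two cases. If $p\nmid d$, then $d$ is invertible modulo $p$ and the congruence $w_1+dk\equiv 1\pmod p$ is solvable for $k$; by the Chinese Remainder Theorem these conditions over the finitely many primes $p\mid h$ with $p\nmid d$ can be satisfied simultaneously. If instead $p\mid d$, then $w_1+dk\equiv w_1\pmod p$, and here $w_1\not\equiv 0\pmod p$: indeed $p\mid h$ gives $p\mid w_2,\dots,w_{2g}$ and $p\mid d$, so $p\mid w_1$ would force $p\mid\gcd(w_1,\dots,w_{2g},d)=1$, a contradiction. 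Hence $w_1'$ is a unit modulo every prime dividing $h$, i.e.\ $\gcd(w_1',h)=1$.

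Finally I would check that $w':=(w_1',w_2,\dots,w_{2g})$ is primitive: any prime $q$ dividing $\gcd(w_1',w_2,\dots,w_{2g})$ divides both $w_1'$ and $h=\gcd(w_2,\dots,w_{2g})$, contradicting $\gcd(w_1',h)=1$. Since $w'\equiv w\equiv\bar v\pmod d$, this produces the desired primitive integral lift and proves surjectivity. The only point requiring care is the split into the cases $p\mid d$ and $p\nmid d$; this is precisely where the primitivity hypothesis $\gcd(v_1,\dots,v_{2g},d)=1$ enters, to guarantee that $w_1$ is already a unit modulo the primes dividing both $d$ and $h$, and it is the crux of the argument. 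Everything else is a routine Chinese Remainder Theorem computation.
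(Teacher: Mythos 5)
Your proposal is sound in its main idea but takes a genuinely different route from the paper, and it has one degenerate case that must be patched. The paper never leaves the surface: it takes an arbitrary lift $v$ of $v_d$, writes a non-primitive $v$ as $v=kw$ with $w$ integrally primitive and $k\ge2$, observes that primitivity of $v_d$ forces $\gcd(k,d)=1$, picks $w'$ with $w\cdot w'=1$ using the unimodularity of the intersection pairing, and certifies that $v+dw'$ is primitive by the single computation $(v+dw')\cdot(-d'w+k'w')=kk'+dd'=1$, where $kk'+dd'=1$ is a B\'ezout relation. You instead discard the symplectic structure entirely, identify $H_1(\Sigma_{g,r};\mathbf{Z})$ with $\mathbf{Z}^{2g}$, and solve the general lifting problem for unimodular vectors by adjusting one coordinate via the Chinese Remainder Theorem. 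Your route is more elementary and strictly more general: it proves surjectivity of $(\mathbf{Z}^n)^{pri}\to(\mathbf{Z}_d^n)^{pri}$ for any $n\ge2$ with no bilinear form in sight, whereas the paper's argument is shorter and intrinsic, certifying primitivity by exhibiting a class of intersection number one rather than by prime-by-prime bookkeeping.

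The gap: you set $h:=\gcd(w_2,\dots,w_{2g})$ for an \emph{arbitrary} lift $w$, but if $w_2=\cdots=w_{2g}=0$ then $h=0$, every prime divides $h$, and the CRT step over the ``finitely many primes $p\mid h$'' is ill-posed. This case genuinely occurs and cannot be handled by modifying the first coordinate alone: for $d=5$ and $\bar v=(2,0,\dots,0)$, which is primitive since $\gcd(2,5)=1$, the vectors $(2+5k,0,\dots,0)$ are primitive only when $2+5k=\pm1$, which is impossible. The repair is one line: if $w_2=\cdots=w_{2g}=0$, first replace $w_2$ by $w_2+d$; this preserves $w\equiv\bar v\bmod d$ and makes $h=d\neq0$, after which every prime $p\mid h$ divides $d$ and your own argument shows $w_1\not\equiv0\bmod p$ (since $\gcd(w_1,d)=1$ here), so $k=0$ already works. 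With that adjustment your proof is complete and correct.
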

\begin{proof}
For $v_d\in H_1(\Sigma_{g,r};\mathbf{Z}_d)^{pri}$, choose $v\in H_1(\Sigma_{g,r};\mathbf{Z})$ which satisfies $v\mod d=v_d\in H_1(\Sigma_{g,r};\mathbf{Z}_d)^{pri}$. If $v$ is not primitive, there exists an integer $k\ge2$ and a primitive element $w\in H_1(\Sigma_{g,r};\mathbf{Z})^{pri}$ such that $v=kw$.  Since $v_d$ is primitive, $k$ and $d$ are coprime. Then, there exist integers $k',d'\in\mathbf{Z}$ such that $kk'+dd'=1$. Choose $w'\in H_1(\Sigma_{g,r};\mathbf{Z})^{pri}$ such that $w\cdot w'=1$. We have
\[
(v+dw')\cdot (-d'w+k'w')=kk'+dd'=1.
\]
Hence, $v+dw'\in H_1(\Sigma_{g,r};\mathbf{Z})$ is primitive and $v+dw'\mod d=v_d\in H_1(\Sigma_{g,r};\mathbf{Z}_d)$.
\end{proof}
\begin{lemma}\label{welldefined of phi}
Let $C_1, C_1'\subset\Sigma_{g,r}$ be non-separating simple closed curves such that $[C_1]=[C_1']\in H_1(\Sigma_{g,r};\mathbf{Z}_d)/\{\pm 1\}$. Then, there exists a mapping class $[f]\in \mathcal{M}_{g,r}[d]$ such that $f(C_1)=C_1'$. 
\end{lemma}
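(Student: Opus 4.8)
The plan is to realize the desired mapping class by combining the change-of-coordinates principle for the mapping class group with the surjectivity statement of Lemma \ref{prim surj}. The hypothesis is that $[C_1]=[C_1']\in H_1(\Sigma_{g,r};\mathbf{Z}_d)/\{\pm1\}$; after possibly replacing the orientation of $C_1'$ we may assume $[C_1]\equiv[C_1']\pmod d$ in $H_1(\Sigma_{g,r};\mathbf{Z}_d)$. The goal is an $[f]\in\mathcal{M}_{g,r}[d]$, i.e.\ a mapping class acting trivially on $H_1(\Sigma_{g,r};\mathbf{Z}_d)$, with $f(C_1)=C_1'$.

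First I would use the standard change-of-coordinates principle: since $C_1$ and $C_1'$ are both non-separating simple closed curves, there is some $[g_0]\in\mathcal{M}_{g,r}$ with $g_0(C_1)=C_1'$. The issue is that $g_0$ need not lie in the level $d$ subgroup, so $\rho(g_0)\bmod d$ may be nontrivial. The key point is that the homology class $v:=[C_1]\in H_1(\Sigma_{g,r};\mathbf{Z})^{pri}$ is fixed mod $d$ by the action of the symplectic matrix $(g_0)_*$, in the sense that $(g_0)_*(v)\equiv v\pmod d$, since $(g_0)_*(v)=[C_1']\equiv[C_1]=v$. I would then correct $g_0$ by an element of $\mathcal{M}_{g,r}$ that fixes $C_1'$ setwise (for instance a suitable product of Dehn twists supported away from $C_1'$, or an element realizing a chosen symplectic transvection) so as to cancel the residual symplectic action modulo $d$ while preserving $f(C_1)=C_1'$.

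Concretely, the key step is to show that the stabilizer of the isotopy class of $C_1'$ in $\mathcal{M}_{g,r}$ surjects, under $\rho\bmod d$, onto the stabilizer in $\Sp(2g;\mathbf{Z}_d)$ of the mod $d$ class $[C_1']$. Granting this, I can choose an element $[h]$ fixing $C_1'$ with $\rho(h)\equiv\rho(g_0)^{-1}\pmod d$; then $f:=h\circ g_0$ satisfies $f(C_1)=h(C_1')=C_1'$ and $\rho(f)\equiv\rho(h)\rho(g_0)\equiv I\pmod d$, so $[f]\in\mathcal{M}_{g,r}[d]$, as required. To establish this surjectivity I would cut $\Sigma_{g,r}$ along $C_1'$ to obtain a surface of genus $g-1$ (with extra boundary), note that mapping classes of the cut surface, together with the twist $t_{C_1'}$, generate a large enough subgroup of the stabilizer, and check that their induced symplectic actions already fill out the stabilizer of $[C_1']$ in $\Sp(2g;\mathbf{Z}_d)$; here Lemma \ref{prim surj} guarantees that any relevant primitive mod $d$ class lifts to an integral primitive class, so the geometric realizations are available.

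The main obstacle I expect is precisely this last surjectivity onto the mod $d$ stabilizer: one must verify that the symplectic transformations fixing the reduction of $[C_1']$ can all be realized by diffeomorphisms fixing the curve $C_1'$ itself, rather than merely fixing its homology class. This is where the transvections $T_y$ and the Dehn twists supported in the complement of $C_1'$ must be shown to generate enough of the symplectic stabilizer modulo $d$; the computation is essentially a linear-algebra statement about generators of the stabilizer of a primitive vector in $\Sp(2g;\mathbf{Z}_d)$, combined with the fact that each such generator is induced by a mapping class disjoint from $C_1'$.
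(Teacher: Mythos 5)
Your overall strategy --- take any $g_0\in\mathcal{M}_{g,r}$ with $g_0(C_1)=C_1'$ and then cancel its mod $d$ symplectic action by an element of the stabilizer of $C_1'$ --- is viable and runs parallel to the paper's proof, but as written it contains one substantive gap: the surjectivity of the curve stabilizer onto $\operatorname{Stab}_{\Sp(2g;\mathbf{Z}_d)}([C_1'])$, which you yourself flag as the key step, is essentially the entire content of the lemma, and you only assert it. Indeed, lifting an element of the mod $d$ stabilizer of $\bar v=[C_1']\bmod d$ to an integral symplectic matrix fixing $v$ is equivalent to the transitivity of $\Gamma_g[d]$ on primitive integral vectors in a fixed mod $d$ congruence class --- the symplectic shadow of the very statement being proved --- so deferring it as ``essentially a linear-algebra statement'' leaves the crux open. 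The delicate part is not the Levi factor: as you say, $\Sp(2g-2;\mathbf{Z}_d)$ is realized by mapping classes of the cut surface, since $\mathcal{M}_{g-1,r+2}\to\Sp(2g-2;\mathbf{Z}_d)$ is onto. It is the unipotent radical of the stabilizer: writing $e_1=[C_1']$ and $f_1$ for a dual class, one must realize, by diffeomorphisms fixing $C_1'$, every transformation of the form $f_1\mapsto f_1+w+ce_1$ with $w$ ranging over the mod $d$ reduction of $e_1^{\perp}/\langle e_1\rangle$. The twist $t_{C_1'}$ supplies only the $c$-direction; for the $w$-directions you would need twists $t_c$ along curves $c$ disjoint from $C_1'$ with $[c]=e_1+z$, and a verification that these, together with the cut-surface classes, generate the full mod $d$ stabilizer. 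This is true (via the decomposition $\operatorname{Stab}(\bar e_1)\cong N(\mathbf{Z}_d)\rtimes\Sp(2g-2;\mathbf{Z}_d)$ and the surjectivity of $N(\mathbf{Z})\to N(\mathbf{Z}_d)$), but nothing in your sketch carries it out.

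For comparison, the paper sidesteps exactly this analysis by performing a small explicit correction \emph{before} choosing the coordinate change rather than after. It sets $u:=([C_1']-[C_1])/d$, uses primitivity of $[C_1']$ to find $v$ with $[C_1']\cdot v=-u\cdot[C_2]$, puts $\alpha_2':=[C_2]+dv$, checks $[C_1']\cdot\alpha_2'=1$, and realizes $\alpha_2'$ by a curve $C_2'$ meeting $C_1'$ transversely once with $[C_2']\equiv[C_2]\bmod d$. Choosing $f$ with $f(C_1)=C_1'$ and $f(C_2)=C_2'$ forces the residual symplectic error to fix \emph{both} $[C_1]$ and $[C_2]$ mod $d$, so the correction takes place entirely in the complement $F=\Sigma_{g,r}-(\cup\Int N(C_i))$, and only the standard surjectivity $\mathcal{M}_{g-1,r+1}\to\Sp(2g-2;\mathbf{Z}_d)$ is needed --- no generation argument for the unipotent radical at all. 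If you complete your generation claim along the lines indicated above, your proof closes and is a legitimate variant; in its current form, however, the decisive step is named but not proved.
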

\begin{proof}
Fix orientations of $C_1$ and $C_1'$ so that $[C_1]=[C_1']\in H_1(\Sigma_{g,r};\mathbf{Z}_d)$. Denote $u:=([C_1']-[C_1])/d\in H_1(\Sigma_{g,r};\mathbf{Z})$. Choose the simple closed curve $C_2$ which intersects $C_1$ transversely at one point. Since $[C_1']\in H_1(\Sigma_{g,r};\mathbf{Z})$ is primitive, there exists $v\in H_1(\Sigma_{g,r};\mathbf{Z})$ which satisfies $[C_1']\cdot v=-u\cdot [C_2]$. If we put $\alpha_2':=[C_2]+dv$, we have
\begin{align*}
\empty [C_1']\cdot\alpha_2'&=[C_1']\cdot([C_2]+dv)\\
&=(du+[C_1])\cdot[C_2]+d[C_1']\cdot v\\
&=du\cdot[C_2]+1+d[C_1']\cdot v\\
&=1.
\end{align*}
In particular, the element $\alpha_2'$ is primitive. Hence there exists $C_2'$ such that $[C_2']=\alpha_2'$, and intersect $C_1'$ transversely in one point.

Choose a diffeomorphism $f:\Sigma_{g,r}\to \Sigma_{g,r}$ which satsifies $f(C_1)=C_1'$, $f(C_2)=C_2'$, $f|_{\partial\Sigma_{g,r}}=id_{\partial\Sigma_{g,r}}$. Denote by $\{Y_i\}_{i=2}^{2g-2}$ the homology class of $H_1(\Sigma_{g,r};\mathbf{Z})$ such that $\{[C_1],[C_2]\}\cup\{Y_i\}_{i=1}^{2g-2}$ makes the symplectic basis. Since we have $f_*([C_i])\equiv [C_i]\, \mod d$ for $i=1,2$, The symplectic action of $f$ on $H_1(\Sigma_{g,r};\mathbf{Z}_d)$ induces the action on $\bigoplus_{i=1}^{2g-2} \mathbf{Z}_d Y_i$. For an closed tubular neighborhood $N(C_i)$ of $C_i$, denote the surface $F:=\Sigma_{g,r}-(\cup \Int N(C_i))$. Here, the action of the mapping class group $\mathcal{M}_{g-1,r+1}$ of $F$ on $H_1(F;\mathbf{Z}_d)/\Im(H_1(\partial F;\mathbf{Z}_d)\to H_1(F;\mathbf{Z}_d))$ induces the surjective homomorphism $\mathcal{M}_{g-1,r+1}\to\Sp(2g-2;\mathbf{Z})\to \Sp(2g-2;\mathbf{Z}_d)$. Hence there exists $g\in \Diff(F,\partial F)$ such that
\[
g_*(Y_i)=f_*^{-1}(Y_i)\in H_1(F;\mathbf{Z}_d).
\]
It is easy to see that $[f(g\cup id_{\amalg N(C_i)})]\in\mathcal{M}_{g,r}[d]$ is the desired mapping class. 
\end{proof}
By Lemma \ref {prim surj}, every element of $S_d$ is represented by a simple closed curve. By Lemma \ref{welldefined of phi}, $S_d$ corresponds to the orbit space of the action of $\mathcal{M}_{g,r}[d]$ on isotopy classes of non-separating simple closed curves.

Now, we define the homomorphism $\mathbf{Z}[S_d]\to H_1(\mathcal{M}_{g,r}[d];\mathbf{Z})$. Denote by $t_C\in\mathcal{M}_{g,r}$ the Dehn twist along a simple closed curve $C\subset \Sigma_{g,r}$. By Lemmas \ref {prim surj} and \ref{welldefined of phi}, we can define the map $\Phi:S_d\to H_1(\mathcal{M}_{g,r}[d];\mathbf{Z})$ by $\Phi([C])=\br{[C]}:=[t_C^d]$. Extend this map to a homomorphism of $\mathbf{Z}$-module
\[
\Phi_d:\mathbf{Z}[S_d]\to H_1(\mathcal{M}_{g,r}[d];\mathbf{Z}).
\]
We consider the case when $d=2$. Then, we have $S_2=H_1(\Sigma_{g,r};\mathbf{Z}_2)-0$. Define $\Phi_2([0]):=0$ and extend $\Phi_2$ to
\[
\Phi:=\Phi_2:\mathbf{Z}[H_1(\Sigma_{g,r};\mathbf{Z}_2)]\to H_1(\mathcal{M}_{g,r}[2];\mathbf{Z}).
\]
\begin{lemma}
The homomorphism $\Phi$ is surjective, and factors through $\mathbf{Z}_8[H_1(\Sigma_{g,r};\mathbf{Z}_2)]$.
\end{lemma}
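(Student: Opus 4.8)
The plan is to prove the two assertions separately, treating surjectivity first and the factoring as the substantive part.

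For surjectivity I would invoke Humphries' theorem \cite{humphries1992ncp}, which for $g\ge 3$ asserts that $\mathcal{M}_{g,r}[2]$ is generated by the squares $t_C^2$ of Dehn twists along non-separating simple closed curves $C$. Consequently $H_1(\mathcal{M}_{g,r}[2];\mathbf{Z})$ is generated by the classes $[t_C^2]$. By Lemma \ref{welldefined of phi} the class $[t_C^2]$ depends only on the mod $2$ homology class $[C]\in H_1(\Sigma_{g,r};\mathbf{Z}_2)$, and by Lemma \ref{prim surj} every nonzero element of $H_1(\Sigma_{g,r};\mathbf{Z}_2)$ is realized as such a $[C]$. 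Since $S_2=H_1(\Sigma_{g,r};\mathbf{Z}_2)\setminus\{0\}$ and $\Phi([C])=[t_C^2]$, while $\Phi([0])=0$ by definition, all the generators of $H_1(\mathcal{M}_{g,r}[2];\mathbf{Z})$ lie in the image of $\Phi$, so $\Phi$ is surjective.

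For the factoring it suffices to show $8\,\Phi([v])=8[t_C^2]=0$ in $H_1(\mathcal{M}_{g,r}[2];\mathbf{Z})$ for every non-separating simple closed curve $C$ with $[C]=v$, since $\mathbf{Z}_8[H_1(\Sigma_{g,r};\mathbf{Z}_2)]$ is obtained from $\mathbf{Z}[H_1(\Sigma_{g,r};\mathbf{Z}_2)]$ by reducing coefficients modulo $8$. I would split the required relation into a symplectic part and a Torelli part. Applying the classical representation $\rho$ gives $\rho_*[t_C^2]=[T_v^2]\in H_1(\Gamma_g[2];\mathbf{Z})$, which by Corollary \ref{abel-symp} lies in the summand $\mathbf{Z}_4^{2g}$ and therefore has order dividing $4$; equivalently $T_v^8\in[\Gamma_g[2],\Gamma_g[2]]$. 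Since $\rho$ maps $\mathcal{M}_{g,r}[2]$ onto $\Gamma_g[2]$, I can lift a product of commutators in $\Gamma_g[2]$ expressing $T_v^8$ to a product of commutators in $\mathcal{M}_{g,r}[2]$, obtaining $t_C^8\equiv\tau\pmod{[\mathcal{M}_{g,r}[2],\mathcal{M}_{g,r}[2]]}$ with $\tau$ in the Torelli group $\mathcal{I}_{g,r}$. Thus $4[t_C^2]=[\tau]$, and it remains to prove $8[t_C^2]=2[\tau]=[\tau^2]=0$.

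The heart of the matter, and the step I expect to be the main obstacle, is controlling this Torelli correction $\tau$ and showing that $\tau^2$ lies in $[\mathcal{M}_{g,r}[2],\mathcal{M}_{g,r}[2]]$. The key structural input I would exploit is that the lantern relation already lives inside $\mathcal{M}_{g,r}[2]$: if $C$ and $C'$ are homologous modulo $2$ then $t_Ct_{C'}\in\mathcal{M}_{g,r}[2]$, so an embedded four-holed sphere with boundary classes $x,-x,y,-y$ and interior curves $\alpha_{12},\alpha_{13},\alpha_{14}$ of classes $0,x+y,x-y$ yields a genuine level $2$ relation $t_{\alpha_{13}}t_{\alpha_{14}}=t_{\alpha_{12}}^{-1}(t_{\delta_1}t_{\delta_2})(t_{\delta_3}t_{\delta_4})$ among elements of $\mathcal{M}_{g,r}[2]$. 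Abelianizing such relations and running the same summation over $k$ used in Section \ref{section:abel-symp} to derive $2d[T^d]=0$, I would extract relations among the $[t_C^2]$ together with explicit bounding-pair and separating-twist corrections. The remaining task is to show these corrections are $2$-torsion, which I would establish using the fact that the relevant Torelli classes are detected only by Birman--Craggs--Johnson type homomorphisms---equivalently by the mod $8$ invariants $\beta_{\sigma}$ built in this section, whose values on $t_C^2$ computed in Proposition \ref{value of beta} are $\pm\, i_{[C]}$ and hence annihilated by $8$. Combining the order-$4$ symplectic bound with the $2$-torsion of the Torelli part then yields $8[t_C^2]=0$, so $\Phi$ factors through $\mathbf{Z}_8[H_1(\Sigma_{g,r};\mathbf{Z}_2)]$.
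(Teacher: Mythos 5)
Your surjectivity argument and your overall decomposition (a $\mathbf{Z}_4$-bound coming from $H_1(\Gamma_g[2];\mathbf{Z})$ combined with a $\mathbf{Z}_2$-bound on the Torelli contribution) match the paper's strategy, and the commutator-lifting step producing $t_C^8\equiv\tau \pmod{[\mathcal{M}_{g,r}[2],\mathcal{M}_{g,r}[2]]}$ with $\tau\in\mathcal{I}_{g,r}$ is sound. But the step you yourself flag as the heart of the matter --- showing $2[\tau]=0$ in $H_1(\mathcal{M}_{g,r}[2];\mathbf{Z})$ --- is a genuine gap as written. Knowing that $\beta_\sigma$ takes values in $\mathbf{Z}_8$, or knowing its values on the classes $[t_C^2]$ from Proposition \ref{value of beta}, tells you nothing about the order of $[\tau]$ itself unless $\beta_\sigma$ is injective on $H_1(\mathcal{M}_{g,1}[2];\mathbf{Z})$; and that injectivity is only established later (Proposition \ref{Z_8-module}), by an order count whose upper bound depends on the present lemma. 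So the appeal to ``detection by the mod $8$ invariants'' is circular at this point in the logical development. Likewise, your claim that the relevant Torelli classes are detected only by Birman--Craggs--Johnson type homomorphisms is not something the lantern-relation manipulations produce: it \emph{is} Johnson's theorem (Theorem \ref{Johnson}, i.e.\ $H_1(\mathcal{I}_{g,r};\mathbf{Z})_{\mathcal{M}_{g,r}[2]}\cong B^3_{g,r}$), which must be cited as an external input, not derived from the $\beta_\sigma$ computations.

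Once you do invoke Johnson's theorem, your elaborate machinery collapses: since the image of $\mathcal{I}_{g,r}$ in $H_1(\mathcal{M}_{g,r}[2];\mathbf{Z})$ factors through the coinvariants $H_1(\mathcal{I}_{g,r};\mathbf{Z})_{\mathcal{M}_{g,r}[2]}\cong B^3_{g,r}$, which is a $\mathbf{Z}_2$-vector space, every Torelli class (in particular $[\tau]$) is $2$-torsion, and you are done --- indeed, at that point the commutator lifting and the element-wise bookkeeping are also unnecessary. This is exactly how the paper argues: from the exact sequence $H_1(\mathcal{I}_{g,r};\mathbf{Z})_{\mathcal{M}_{g,r}[2]}\to H_1(\mathcal{M}_{g,r}[2];\mathbf{Z})\to H_1(\Gamma_g[2];\mathbf{Z})\to 0$, the left-hand group is a $\mathbf{Z}_2$-module by Johnson and the right-hand group is a $\mathbf{Z}_4$-module by Corollary \ref{abel-symp}, so the middle group is annihilated by $8$ and $\Phi$ factors through $\mathbf{Z}_8[H_1(\Sigma_{g,r};\mathbf{Z}_2)]$. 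The fix for your proof is therefore simple: replace the lantern-relation and $\beta_\sigma$ paragraph with a citation of Theorem \ref{Johnson}.
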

\begin{proof}
Humphries\cite{humphries1992ncp} proved that the level 2 mapping class group is generated by Dehn twists along non-separating curves. Hence, $\Phi$ is surjective. Denote by $H_1(\mathcal{I}_{g,r};\mathbf{Z})_{\mathcal{M}_{g,r}[2]}$ the  coinvariant of the action of $\mathcal{M}_{g,r}[2]$ on $H_1(\mathcal{I}_{g,r};\mathbf{Z})$. Consider the exact sequence
\[
\begin{CD}
H_1(\mathcal{I}_{g,r};\mathbf{Z})_{\mathcal{M}_{g,r}[2]}@>>>H_1(\mathcal{M}_{g,r}[2];\mathbf{Z})@>>>H_1(\Gamma_g[2];\mathbf{Z})@>>>0.
\end{CD}
\]
The coinvariant $H_1(\mathcal{I}_{g,r};\mathbf{Z})_{\mathcal{M}_{g,r}[2]}$ is proved to be a $\mathbf{Z}_2$-module in Johnson\cite{johnson1985stg} Theorems 1 and 4, and we proved that $H_1(\Gamma_g[2];\mathbf{Z})$ is a $\mathbf{Z}_4$-module in Section \ref{section:abel-symp}. Hence $H_1(\mathcal{M}_{g,r}[2];\mathbf{Z})$ is a $\mathbf{Z}_8$-module. This shows that $\Phi$ factors through the module $\mathbf{Z}_8[H_1(\Sigma_{g,r};\mathbf{Z}_2)]$. 
\end{proof}
\subsection{Upper bound of the order $|H_1(\mathcal{M}_{g,1}[2];\mathbf{Z})|$}\label{upper bound}
In this subsection, we examine the kernel of the inclusion homomorphism
\[
H_1(\mathcal{I}_{g,r};\mathbf{Z})_{\mathcal{M}_{g,r}[2]}\to H_1(\mathcal{M}_{g,r}[2];\mathbf{Z}),
\]
and give an upper bound of the order of $H_1(\mathcal{M}_{g,1}[2];\mathbf{Z})$. 

First, we review the $\mathbf{Z}_2$-module $B_{g,r}^3$ defined by Johnson\cite{johnson1980qfa}. We consider the commutative polynomial ring $R$ with coefficient $\mathbf{Z}_2$ in formal symbol $\bar{x}$ for $x\in H_1(\Sigma_{g,r};\mathbf{Z})$. Denote by $J$ the ideal of this polynomial ring generated by 
\[
\overline{x+y}-(\bar{x}+\bar{y}+x\cdot y),\hspace{0.4cm}\bar{x}^2-\bar{x},
\]
for $x,y\in H\otimes \mathbf{Z}_2$,  

Denote by $R_n$ the module consisting of polynomials whose degrees are less than or equal to $n$. Define the module $B^n$ by
\[B^n=\frac{R_n}{J\cap R_n},\]
and denote 
\[B_{g,1}^3:=B^3.\]
Let $A_i,B_i\}_{i=1}^g$ denote a symplectic basis defined in Proposition \ref{value of beta}. For the element $\alpha=\Sigma_{i=1}^g\bar{A}_i\bar{B}_i\in B^2$, define the homomorphism $B^1\to B_{g,1}^3$ by $x\mapsto x\alpha$. Denote its cokernel by $B_g^3$. Johnson determined the $\Sp(2g;\mathbf{Z})$-module structure of $H_1(\mathcal{I}_{g,r};\mathbf{Z})_{\mathcal{M}_{g,r}[2]}$. 
\begin{theorem}[Johnson\cite{johnson1985stg} Theorem 1, Theorem 4]\label{Johnson}
\[
H_1(\mathcal{I}_{g,r};\mathbf{Z})_{\mathcal{M}_{g,r}[2]}\cong B_{g,r}^3.
\]
\end{theorem}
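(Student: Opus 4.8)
The plan is to reduce the statement to a computation of coinvariants for the congruence group $\Gamma_g[2]$ and then to assemble two classical descriptions of the Torelli abelianization due to Johnson. Since a group acts trivially by conjugation on its own abelianization and $\mathcal{I}_{g,r}$ is normal in $\mathcal{M}_{g,r}[2]$, the action of $\mathcal{M}_{g,r}[2]$ on $H_1(\mathcal{I}_{g,r};\mathbf{Z})$ factors through $\mathcal{M}_{g,r}[2]/\mathcal{I}_{g,r}=\Gamma_g[2]$. Thus it suffices to prove $H_1(\mathcal{I}_{g,r};\mathbf{Z})_{\Gamma_g[2]}\cong B_{g,r}^3$, and everything becomes a question about the $\Gamma_g[2]$-module $H_1(\mathcal{I}_{g,r};\mathbf{Z})$. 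I would recall the two $\Sp(2g;\mathbf{Z})$-equivariant homomorphisms out of the Torelli group: the Johnson homomorphism $\tau\colon\mathcal{I}_{g,r}\to\Lambda^3H$ (with $H:=H_1(\Sigma_{g,r};\mathbf{Z})$, landing in $\Lambda^3H/H$ when $r=0$), which detects the free part of $H_1(\mathcal{I}_{g,r};\mathbf{Z})$, and the Birman--Craggs--Johnson homomorphism $\sigma\colon\mathcal{I}_{g,r}\to B_{g,r}^3$, which is surjective and detects the $2$-torsion. The crucial compatibility is Johnson's relation that the reduction $\tau\otimes\mathbf{Z}_2$ coincides with the degree-$3$ (top-degree) stratum of $\sigma$.

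First I would establish surjectivity, which is the easy half. Because $B_{g,r}^3$ is a $\mathbf{Z}_2$-module built from $H_1(\Sigma_{g,r};\mathbf{Z}_2)$, on which $\Gamma_g[2]$ acts trivially by the very definition of the level $2$ subgroup, the target of $\sigma$ carries the trivial $\Gamma_g[2]$-action. Hence $\sigma$ descends to a surjection $H_1(\mathcal{I}_{g,r};\mathbf{Z})_{\Gamma_g[2]}\twoheadrightarrow B_{g,r}^3$, and the remaining task is to bound the order of the source from above by $|B_{g,r}^3|$.

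For the order bound I would run the coinvariant functor on Johnson's extension $0\to T\to H_1(\mathcal{I}_{g,r};\mathbf{Z})\to\Lambda^3H\to0$, where $T$ is the BCJ $2$-torsion and the right-hand map is induced by $\tau$. The key local computation is that for each transvection square $T_y^2\in\Gamma_g[2]$ one has $(T_y^2-1)(x)=2(y\cdot x)\,y$ on $H$, so on $\Lambda^3H$ the augmentation ideal of $\Gamma_g[2]$ lands in $2\Lambda^3H$; choosing $y$ among basis classes and expanding shows these elements generate all of $2\Lambda^3H$, whence $(\Lambda^3H)_{\Gamma_g[2]}\cong\Lambda^3(H\otimes\mathbf{Z}_2)$, exactly the degree-$3$ part of $B_{g,r}^3$. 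Since $T$ already carries the trivial $\Gamma_g[2]$-action, right-exactness gives $|H_1(\mathcal{I}_{g,r};\mathbf{Z})_{\Gamma_g[2]}|\le|T|\cdot|\Lambda^3(H\otimes\mathbf{Z}_2)|$, and I would match the right-hand side with $|B_{g,r}^3|$ by identifying $T$ with the degrees-$\le 2$ part of $B_{g,r}^3$. Combined with the surjection above, the order comparison forces an isomorphism.

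The hard part will be the extension problem for coinvariants: the coinvariant functor is only right exact, so a priori $H_1(\mathcal{I}_{g,r};\mathbf{Z})_{\Gamma_g[2]}$ could be a nontrivial $\mathbf{Z}_4$-extension of $\Lambda^3(H\otimes\mathbf{Z}_2)$ by (a quotient of) $T$, which would violate the all-$\mathbf{Z}_2$ conclusion. This is exactly where Johnson's relation $\tau\otimes\mathbf{Z}_2=\sigma_{(3)}$ is indispensable: it shows that $\sigma$ supplies a section over $\mathbf{Z}_2$ splitting the cubic stratum off, so that the potential $\mathbf{Z}_4$-contribution coming from the non-split integral extension dies at the coinvariant level and no cubic classes collapse. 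Finally, the closed case $r=0$ needs the extra bookkeeping of quotienting by the image of $H$ in $\Lambda^3H$, which I would carry out compatibly with both $\tau$ and $\sigma$ so that the same order comparison yields $B_{g,0}^3=B_g^3$.
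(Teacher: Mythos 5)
The paper offers no proof of this statement: it is quoted verbatim from Johnson's Torelli~III paper (his Theorems 1 and 4), so there is no internal argument to compare against. Your write-up is essentially a correct \emph{deduction} of the coinvariance statement from Johnson's structure theorems, and for $r=1$ it is complete in outline: the reduction of the action to $\Gamma_g[2]$, the descent of the Birman--Craggs--Johnson homomorphism $\sigma$ to a surjection on coinvariants, the computation $(T_y^2-1)(a\wedge b\wedge c)=2\bigl((y\cdot a)\,y\wedge b\wedge c+(y\cdot b)\,a\wedge y\wedge c+(y\cdot c)\,a\wedge b\wedge y\bigr)$, which together with $(g-1)\Lambda^3H\subset 2\Lambda^3H$ for every $g\in\Gamma_g[2]$ gives $(\Lambda^3H)_{\Gamma_g[2]}\cong\Lambda^3(H\otimes\mathbf{Z}_2)$, and the order count $|B_{g,1}^3|=|B^2|\cdot 2^{\binom{2g}{3}}$ all check out. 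Two caveats. First, be clear that this is a derivation from, not a proof independent of, Johnson's hardest results: you assume the exact sequence $0\to T\to H_1(\mathcal{I}_{g,1};\mathbf{Z})\to\Lambda^3H\to 0$ with $T\cong B^2$, and that identification of the torsion is exactly where the fiber-product theorem (hence the relation $\tau\otimes\mathbf{Z}_2=\sigma_{(3)}$) is consumed; consequently your closing paragraph misplaces the role of that relation --- once $T\cong B^2$ and the surjectivity of $\sigma$ are granted, the surjection-plus-order comparison already settles the extension problem by itself, so the ``hard part'' you describe is a difficulty your own count has already closed, with the compatibility hidden in the hypotheses rather than invoked at the coinvariant level. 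Second, the closed case $r=0$ is only gestured at; to finish it you need Johnson's closed-surface statements (free part $\Lambda^3H/H$, torsion the corresponding quotient of $B^2$, and $B_g^3=B_{g,1}^3/B^1\alpha$ with $\alpha=\sum_{i=1}^g\bar{A}_i\bar{B}_i$, as recorded in the paper's Subsection 4.4), together with the observation that $\Lambda^3H/H$ is torsion-free so that the same transvection computation yields $(\Lambda^3H/H)_{\Gamma_g[2]}\cong(\Lambda^3H/H)\otimes\mathbf{Z}_2$ without a Tor correction, before the identical order comparison applies.
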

Next, we examine the kernel of $\iota:B_{g,r}^3\cong H_1(\mathcal{I}_{g,r};\mathbf{Z})_{\mathcal{M}_{g,r}[2]}\to H_1(\mathcal{M}_{g,r}[2];\mathbf{Z})$. 
\begin{lemma}\label{kernel iota}
For $r=0,1$,
\[
1\in\Ker\iota.
\]
\end{lemma}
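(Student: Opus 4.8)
The plan is to realize the unit $1\in B_{g,r}^3$ as the Birman--Craggs--Johnson class of an explicit element $\psi\in\mathcal I_{g,r}$ that is manifestly a product of commutators of elements of $\mathcal M_{g,r}[2]$; then $\iota(1)=[\psi]=0$ in $H_1(\mathcal M_{g,r}[2];\mathbf Z)$. Throughout I use Theorem \ref{Johnson} to identify $B_{g,r}^3$ with $H_1(\mathcal I_{g,r};\mathbf Z)_{\mathcal M_{g,r}[2]}$, so that the coinvariant class of a Torelli element $\varphi$ is its Johnson image $\sigma(\varphi)$, and $\iota$ is the map induced by the inclusion $\mathcal I_{g,r}\hookrightarrow\mathcal M_{g,r}[2]$ on first homology. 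This map factors through the coinvariants, since conjugation acts trivially on the abelian group $H_1(\mathcal M_{g,r}[2];\mathbf Z)$, and the factored map is exactly $\iota$.

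First I would record why $1$ cannot be produced by the cheapest mechanism. If $k\in\mathcal M_{g,r}[2]$ has $\rho(k)$ of order two, then $\rho(k)$ is $-I$ on a symplectic subspace $V$ (an involution of $\Sp(2g;\mathbf Z)$ has $\pm1$-eigenspaces, and the $-1$-eigenspace is symplectic), so $k^2\in\mathcal I_{g,r}$; since the $\rho(k)$-action on $B_{g,r}^3$ is trivial, $\sigma(k^2)$ depends only on $\rho(k)$ and equals the Arf polynomial $\sum_i\bar a_i\bar b_i$ of $V$, as one sees in the genus-one model $(t_{A_1}^2t_{B_1}^2t_{A_1+B_1}^2)^2=t_\partial^{\mathrm{odd}}$, whose Johnson class is $\bar A_1\bar B_1$. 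Such Arf polynomials have no constant term, and $\sigma$ is a homomorphism, so no product of squares has class $1$. Hence I would pass to a genuine commutator: choose non-separating simple closed curves $C_1,C_2$ with algebraic intersection $[C_1]\cdot[C_2]=0$ but geometric intersection two (their union filling a subsurface $\Sigma_{1,2}$), set $\alpha=t_{C_1}^2,\ \beta=t_{C_2}^2\in\mathcal M_{g,r}[2]$ and $\psi=[\alpha,\beta]$. Since $\rho(\alpha)=T_{[C_1]}^2$ and $\rho(\beta)=T_{[C_2]}^2$ commute in $\Sp(2g;\mathbf Z)$, we get $\psi\in\mathcal I_{g,r}$, and by construction $\psi\in[\mathcal M_{g,r}[2],\mathcal M_{g,r}[2]]$, so $[\psi]=0$ in $H_1(\mathcal M_{g,r}[2];\mathbf Z)$ automatically.

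The remaining and principal task is to verify that $C_1,C_2$ can be arranged so that $\sigma(\psi)=1$. The linear part of $\sigma(\psi)$ is read off by the homomorphisms $\beta_{\sigma',x}$, which vanish on the commutator $\psi$; this is consistent with $\sigma(\psi)=1$, because the constant function is annihilated by every $\beta_{\sigma',x}$ (a short calculation with Proposition \ref{value of beta} shows the relevant sum of $i_{x'}$'s is even). The constant term of $\sigma(\psi)$ is exactly the Birman--Craggs invariant $\rho_q(\psi)$, which is invisible to the \emph{differences} $\beta_{\sigma',x}$ but is carried by the raw Rochlin value: since $\beta_{\sigma',x}(\psi)=0$ for all $x$, the function $R(M_\psi,\,\cdot\,)$ is constant on $\spin(M_\psi)$, and that constant is precisely the constant term we seek. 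It is nonzero only because $R$ fails to be additive by the signature $\Sign W_{\alpha,\beta}$ of the bundle of Subsection \ref{subsec:spin boundary}, i.e.\ the commutator picks up a constant term from this signature defect. I would compute it through the $\mathrm{pin}^-$ bordism class of the surface dual to the relevant class in $M_\psi$ via Lemma \ref{Turaev} and the Brown-invariant formula, and check that for the chosen configuration it is $1$. I expect this signature/Brown-invariant computation to be the main obstacle, the delicate point being to confirm the answer is the constant $1$ rather than $0$ (equivalently, that $\psi$ is not, modulo squares, a product of separating twists, whose classes have vanishing constant term).

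Finally, the case $r=0$ follows from $r=1$. Capping the boundary induces a surjection $\mathcal M_{g,1}[2]\to\mathcal M_{g,0}[2]$ compatible with $\rho$, carrying $\psi$ to an element with the same two properties and carrying $1\in B_{g,1}^3$ to $1\in B_{g,0}^3=B_{g,1}^3/\langle x\alpha\rangle$; hence $1\in\Ker\iota$ for $r=0$ as well.
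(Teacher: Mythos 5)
Your outline --- realize $1\in B_{g,r}^3$ as the Birman--Craggs--Johnson class of an explicit commutator $\psi=[t_{C_1}^2,t_{C_2}^2]$ of elements of $\mathcal{M}_{g,r}[2]$, so that $[\psi]=0$ in $H_1(\mathcal{M}_{g,r}[2];\mathbf{Z})$ for free --- is a legitimate strategy in principle, but the proposal has a genuine gap: the single claim that carries all the content, namely $\sigma(\psi)=1$ \emph{exactly} (constant term $1$ and all nonconstant terms zero), is never proved; you yourself defer it as ``the main obstacle'' and say you ``expect'' the computation to give $1$. Note that constant term $1$ alone is not enough: if $\sigma(\psi)=1+(\text{higher terms})$ you only learn that $1+(\text{higher terms})\in\Ker\iota$, which is not the lemma. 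The scaffolding around the deferred computation is also shaky. First, from $\beta_{\sigma',x}(\psi)=0$ for all $x\in H_1(\Sigma_g;\mathbf{Z}_2)$ you infer that $R(M_\psi,\cdot)$ is constant on $\spin(M_\psi)$; but $\spin(M_\psi)$ is a torsor over $H^1(M_\psi;\mathbf{Z}_2)\cong H^1(\Sigma_g;\mathbf{Z}_2)\oplus H^1(S^1;\mathbf{Z}_2)$, and the $\beta_{\sigma',x}$ only vary the $\Sigma_g$-directions, so constancy in the $S^1$-direction does not follow. Second, the identification of that constant with the constant term of the BCJ class is asserted without argument: the Birman--Craggs invariants are defined via Rochlin invariants of homology spheres obtained from Heegaard embeddings, and transporting them to Rochlin functions of mapping tori is itself a nontrivial step (essentially the content of Johnson's Rochlin-invariant paper), not something Lemma \ref{Turaev} gives you directly. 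Third, beware circularity: if you try to pin down the nonconstant terms of $\sigma(\psi)$ by appealing to injectivity of $\beta_\sigma$, that injectivity is only established in Proposition \ref{Z_8-module}, whose proof uses the upper bound of Subsection \ref{upper bound}, which rests on this very lemma.

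For comparison, the paper proves the lemma without producing any single ``magic'' element: it evaluates $\iota$ on explicit BCJ classes using Johnson's Lemmas 12a and 12b of \cite{johnson1980qfa} together with the chain relation. Concretely, $t_{D_1}=(t_{C_1}t_{C_2})^6$ gives $\iota(\overline{A}_1\overline{B}_1)=2\br{A_1}+2\br{B_1}+2\br{A_1+B_1}$, a second chain computation combined with an $\mathcal{M}_{g,r}$-equivariance trick yields $\iota(\overline{A}_1)=4\br{A_1}$, and then, since $\overline{A_1+B_1}-\overline{A}_1-\overline{B}_1=A_1\cdot B_1=1$ in $B_{g,r}^3$ and $H_1(\mathcal{M}_{g,r}[2];\mathbf{Z})$ is a $\mathbf{Z}_8$-module, one gets $\iota(1)=4(\br{A_1+B_1}+\br{A_1}+\br{B_1})=\iota(2\overline{A}_1\overline{B}_1)=0$ because $2=0$ in the $\mathbf{Z}_2$-module $B_{g,r}^3$. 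If you wish to salvage your route, you would have to carry out the Brown-invariant computation for an explicit surface dual in $M_\psi$ and verify every coefficient of $\sigma(\psi)$; in practice that is a computation of the same order as the paper's, so as it stands your proposal is an unproven reduction rather than a proof.
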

\begin{proof}
As in Figure \ref{fig: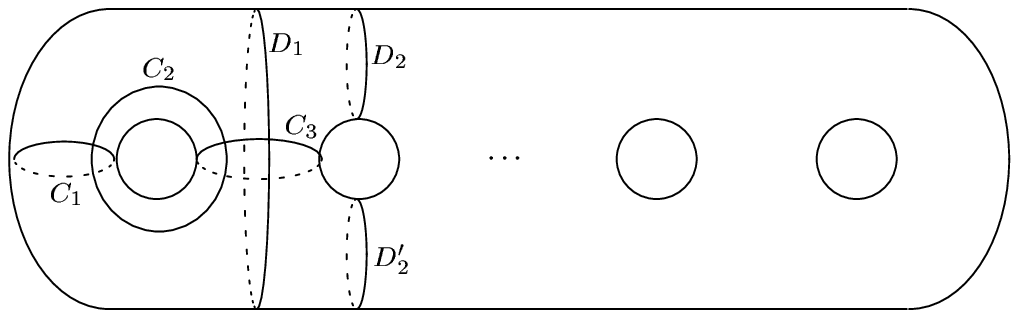}, choose the simple closed curves $C_1$, $C_2$, $D_1$ so that $[C_1]=B_1, [C_2]=A_1$. For $X\in H_1(\Sigma_{g,r};\mathbf{Z}_2)$, we denote simply $\br{X}:=\Phi(X)$.
\begin{figure}[h]
  \begin{center}
    \includegraphics{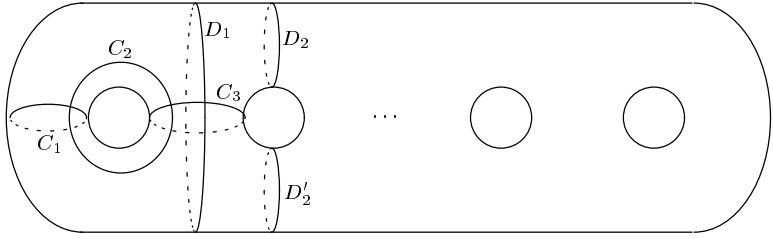}
  \end{center}
  \caption{the curves}
  \label{fig:curve2.eps}
\end{figure}
Then, by Lemma 12a in Johnson\cite{johnson1980qfa}  and the chain relation, we have
\begin{align}
\iota(\overline{A}_1\overline{B}_1)&=[t_{D_1}]\notag\\
&=[(t_{C_1}t_{C_2})^6]\notag\\
&=2[t_{C_1}^2]+2[t_{C_2}^2]+2[t_{C_1}^{-1}t_{C_2}^2t_{C_1}]\notag\\
&=2\br{A_1}+2\br{B_1}+2\br{A_1+B_1}.\label{a1b1}
\end{align}
If we choose $\varphi\in\mathcal{M}_{g,r}$ such that $\varphi(A_1)=A_1$, $\varphi(B_1)=B_1+B_2$, we have
\begin{equation}\label{a1b1b2-1}
\iota(\overline{A}_1(\overline{B_1+B_2}))=2\br{A_1}+2\br{B_1+B_2}+2\br{A_1+B_1+B_2}.
\end{equation}
In the same fashion, by Lemma 12b in Johnson\cite{johnson1980qfa} and the chain relation, we have
\begin{align*}
\iota(\overline{A}_1\overline{B}_1(\overline{B_2}+1))&=[t_{D_2}t_{D'_2}^{-1}]\\
&=[(t_{C_1}t_{C_2}t_{C_3})^4]-\br{B_2}\\
&=\br{B_1}+\br{A_1}+\br{B_1+B_2}+\br{A_1+B_1}+\br{A_1+B_1+B_2}+\br{A_1+B_2}-\br{B_2}.
\end{align*}
Since $\iota(2\overline{A}_1\overline{B}_1(\overline{B_2}+1))=0$, we have
\[
2\br{A_1+B_1+B_2}=-2(\br{B_1}+\br{A_1}+\br{B_1+B_2}+\br{A_1+B_1}+\br{A_1+B_2}-\br{B_2}).
\]
Put this into the equation (\ref{a1b1b2-1}), then we have
\begin{align}
\iota(\overline{A}_1(\overline{B_1+B_2}))&=2\br{A_1}+2\br{B_1+B_2}\notag\\
&-2(\br{A_1}+\br{B_1}-\br{B_2}+\br{A_1+B_1}+\br{A_1+B_2}+\br{B_1+B_2})\notag\\
&=-2\br{B_1}+2\br{B_2}-2\br{A_1+B_1}-2\br{A_1+B_2}.\label{a1b1b2-2}
\end{align}
By the equation (\ref{a1b1}) and (\ref{a1b1b2-2}),
\begin{align*}
\iota(\overline{A}_1\overline{B}_2)&=\iota(\overline{A}_1(\overline{B_1+B_2})+\overline{A}_1\overline{B}_1)\\
&=2\br{A_1}+2\br{B_2}-2\br{A_1+B_2}.
\end{align*}
If we choose $\varphi\in\mathcal{M}_{g,r}$ so that $\varphi_*(A_1)=A_1$, $\varphi_*(B_2)=A_1+B_2$, we have
\[
\iota(\overline{A}_1(\overline{A}_1+\overline{B}_2))=2\br{A_1}+2\br{A_1+B_2}-2\br{B_2}.
\]
Hence we obtain
\begin{align*}
\iota(\overline{A}_1)&=\iota(\overline{A}_1(\overline{A}_1+\overline{B}_2)-\overline{A}_1\overline{B_2})\\
&=4\br{A_1}.
\end{align*}
As we stated in the last subsection, $H_1(\mathcal{M}_{g,r}[2];\mathbf{Z})$ is a $\mathbf{Z}_8$-module. Hence we have $8\br{A_1}=8\br{B_1}=0$. Therefore, we see that
\begin{align*}
\iota(1)&=\iota(\overline{A_1+B_1}-\overline{A}_1-\overline{B}_1)\\
&=4(\br{A_1+B_1}-\br{A_1}-\br{B_1})\\
&=4(\br{A_1+B_1}+\br{A_1}+\br{B_1})\\
&=\iota(2\overline{A}_1\overline{B}_1)=0\in H_1(\mathcal{M}_{g,r}[2];\mathbf{Z}).
\end{align*}
\end{proof}
By this lemma, we obtain the upper bound
\[
|H_1(\mathcal{M}_{g,1}[2];\mathbf{Z})|\le|B_{g,1}^3/\br{1}||H_1(\Gamma_g[2];\mathbf{Z})|.
\]
\subsection{Lower bound of the order $|H_1(\mathcal{M}_{g,1}[2];\mathbf{Z})|$}\label{lower bound}
In this subsection, we give a lower bound of the order of $H_1(\mathcal{M}_{g,1}[2];\mathbf{Z})$
\[
|H_1(\mathcal{M}_{g,1}[2];\mathbf{Z})|\ge |\mathbf{Z}_8^{2g}\oplus\mathbf{Z}_4^{\binom{2g}{2}}\oplus\mathbf{Z}_2^{\binom{2g}{3}}|.
\] 
Using this result we determine the abelianization $H_1(\mathcal{M}_{g,1}[2];\mathbf{Z})$.

For $\sigma\in\spin(\Sigma_g)$ and $\{x_j\}_{j=1}^n\subset H_1(\Sigma_{g,1};\mathbf{Z}_2)$, define $\Delta_\sigma(x_1,x_2,\cdots,x_n)$ by
\begin{multline*}
\Delta_\sigma(x_1,x_2,\cdots,x_n):=\sum_{j=1}^n(-1)^{q_\sigma(x_j)}[x_j]+\sum_{1\le j<k\le n}^n(-1)^{q_\sigma(x_j+x_k)}[x_j+x_k]\\
+\cdots+(-1)^{q_\sigma(x_1+x_2+\cdots+x_n)}[x_1+x_2+\cdots+x_n]\in\mathbf{Z}_8[H_1(\Sigma_{g,1};\mathbf{Z}_2)].
\end{multline*}
\begin{lemma}\label{Delta_sigma}
\begin{align*}
\Delta_\sigma(x_1,x_2,\cdots,x_{n-1},x_n, x_{n+1})=&\Delta_\sigma(x_1,x_2,\cdots,x_{n-1},x_n+x_{n+1})+\Delta_\sigma(x_1,x_2,\cdots,x_{n-1},x_n)\notag\\
&+\Delta_\sigma(x_1,x_2,\cdots,x_{n-1},x_{n+1})-2\Delta_\sigma(x_1,x_2,\cdots,x_{n-1}).
\end{align*}
\end{lemma}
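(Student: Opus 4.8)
The plan is to prove this identity by a purely combinatorial bookkeeping of the subsets appearing in the definition of $\Delta_\sigma$; the quadratic function $q_\sigma$ will play no active role and simply rides along as a coefficient attached to each summand. First I would rewrite the definition compactly: for indices $1,\dots,m$ and a subset $S\subseteq\{1,\dots,m\}$ put $x_S:=\sum_{j\in S}x_j$, so that
\[
\Delta_\sigma(x_1,\dots,x_m)=\sum_{\emptyset\ne S\subseteq\{1,\dots,m\}}(-1)^{q_\sigma(x_S)}[x_S].
\]

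Next I would isolate the ``tail'' contribution of the last arguments. For $w\in H_1(\Sigma_{g,1};\mathbf{Z}_2)$, define
\[
P(w):=\sum_{T\subseteq\{1,\dots,n-1\}}(-1)^{q_\sigma(x_T+w)}[x_T+w],
\]
where the sum runs over all subsets $T$, including the empty one (which contributes $(-1)^{q_\sigma(w)}[w]$). Writing $D:=\Delta_\sigma(x_1,\dots,x_{n-1})$, the key elementary observation is that appending a single new argument $w$ splits the nonempty subsets of the enlarged index set into those avoiding the new index, which reproduce $D$, and those containing it, which give exactly $P(w)$. Hence $\Delta_\sigma(x_1,\dots,x_{n-1},w)=D+P(w)$, and in particular the three positive terms on the right-hand side equal $D+P(x_n+x_{n+1})$, $D+P(x_n)$, and $D+P(x_{n+1})$.

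For the left-hand side I would classify each nonempty $S\subseteq\{1,\dots,n-1,n,n+1\}$ by its intersection with $\{n,n+1\}$. The four cases $S\cap\{n,n+1\}=\emptyset,\{n\},\{n+1\},\{n,n+1\}$ produce respectively $D$, $P(x_n)$, $P(x_{n+1})$, and $P(x_n+x_{n+1})$, the last because a subset of the form $T\cup\{n,n+1\}$ contributes $(-1)^{q_\sigma(x_T+x_n+x_{n+1})}[x_T+x_n+x_{n+1}]$. Therefore $\Delta_\sigma(x_1,\dots,x_{n-1},x_n,x_{n+1})=D+P(x_n)+P(x_{n+1})+P(x_n+x_{n+1})$, while the right-hand side equals $3D+P(x_n+x_{n+1})+P(x_n)+P(x_{n+1})-2D$, and the two sides agree term by term.

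The only point requiring care, and what I would regard as the sole (minor) obstacle, is the consistent treatment of the empty subset $T=\emptyset$: this is what guarantees that the single-argument expansions $\Delta_\sigma(\dots,w)=D+P(w)$ and the two-argument expansion share precisely the same tail sums $P(\cdot)$, and the correction $-2D$ is exactly what cancels the double overcounting of the base block $D$ among the three positive terms. Notably, no use of the enhancement relation $q_\sigma(x+y)=q_\sigma(x)+q_\sigma(y)+x\cdot y$ is needed anywhere, so the identity in fact holds verbatim with $q_\sigma$ replaced by any $\mathbf{Z}_2$-valued function on $H_1(\Sigma_{g,1};\mathbf{Z}_2)$.
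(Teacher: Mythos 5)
Your proposal is correct and is essentially the paper's own argument: your tail sum $P(w)$ is exactly the paper's auxiliary $\Delta_\sigma^{w}$ (read so as to include the empty-subset term $(-1)^{q_\sigma(w)}[w]$, which the paper's usage $\Delta_\sigma^{X}(x_1,\ldots,x_{n-1})=\Delta_\sigma(x_1,\ldots,x_{n-1},X)-\Delta_\sigma(x_1,\ldots,x_{n-1})$ implicitly requires), and your four-case classification of subsets by their intersection with $\{n,n+1\}$ is the paper's two successive peelings $\Delta_\sigma(\ldots,x_n,x_{n+1})=\Delta_\sigma(\ldots,x_n)+\Delta_\sigma^{x_{n+1}}(\ldots,x_n)$ and $\Delta_\sigma^{x_{n+1}}(\ldots,x_n)=\Delta_\sigma^{x_{n+1}}(\ldots)+\Delta_\sigma^{x_n+x_{n+1}}(\ldots)$ carried out in one stroke. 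Your explicit bookkeeping of the empty subset, and the remark that $q_\sigma$ is inert so the identity holds for any $\mathbf{Z}_2$-valued coefficient function, are consistent with (and slightly cleaner than) the paper's proof.
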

\begin{proof}
For $X\in H_1(\Sigma_{g,r};\mathbf{Z}_2)$, denote
\begin{multline*}
\Delta_\sigma^X(x_1,x_2,\cdots,x_n):=\sum_{j=1}^n(-1)^{q_\sigma(x_j+X)}[x_j+X]+\sum_{1\le j<k\le n}^n(-1)^{q_\sigma(x_j+x_k+X)}[x_j+x_k+X]\\
+\cdots+(-1)^{q_\sigma(x_1+x_2+\cdots+x_n+X)}[x_1+x_2+\cdots+x_n+X].
\end{multline*}
By the definition of $\Delta_\sigma$, we have
\begin{equation}\label{Delta^x_{n+1}}
\Delta_\sigma(x_1,x_2,\cdots,x_{n-1},x_n, x_{n+1})=\Delta_\sigma(x_1,x_2,\cdots,x_{n-1},x_n)+\Delta_\sigma^{x_{n+1}}(x_1,x_2,\cdots,x_{n-1},x_n)
\end{equation}
Similarly, we see that
\begin{align*}
\Delta_\sigma^{x_{n+1}}(x_1,x_2,\cdots,x_{n-1},x_n)=&\Delta_\sigma^{x_{n+1}}(x_1,x_2,\cdots,x_{n-1})+\Delta_\sigma^{x_n+x_{n+1}}(x_1,x_2,\cdots,x_{n-1})\\
=&\Delta_\sigma(x_1,x_2,\cdots,x_{n-1},x_{n+1})-\Delta_\sigma(x_1,x_2,\cdots,x_{n-1})\\
&\!\!+\Delta_\sigma(x_1,x_2,\cdots,x_{n-1},x_n+x_{n+1})-\Delta_\sigma(x_1,x_2,\cdots,x_{n-1})
\end{align*}
Put this into the equation (\ref{Delta^x_{n+1}}), then we obtain what we intended to prove.
\end{proof}
\begin{lemma}\label{beta-Delta}
For $\{x_j\}_{j=1}^n\subset H_1(\Sigma_{g,1};\mathbf{Z}_2)$ and $x\in H_1(\Sigma_g;\mathbf{Z}_2)$,
\[
\beta_\sigma\Phi(\Delta_\sigma^n(x_1,x_2,\cdots,x_n))(x)=2^{n-1}\prod_{j=1}^n i_{x_j}(x).
\]
\end{lemma}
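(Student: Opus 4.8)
The plan is to push the computation down to the single-generator level using Proposition \ref{value of beta}, and then to run an induction on $n$ governed by the recursion of Lemma \ref{Delta_sigma}. First I would evaluate $\beta_\sigma\Phi$ on a single basis element $[X]$ of $\mathbf{Z}_8[H_1(\Sigma_{g,1};\mathbf{Z}_2)]$. Since $\Phi([X])=[t_C^2]$ for a non-separating simple closed curve $C$ representing $X$, Proposition \ref{value of beta} gives $\beta_\sigma\Phi([X])(x)=(-1)^{q_\sigma(X)}i_X(x)$. Both $\Phi$ and $\beta_\sigma$ are homomorphisms of $\mathbf{Z}_8$-modules, so I may apply $\beta_\sigma\Phi(-)(x)$ termwise to the explicit subset expansion defining $\Delta_\sigma^n(x_1,\dots,x_n)$. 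The crucial simplification is that the summand indexed by a subset $S$ carries the coefficient $(-1)^{q_\sigma(x_S)}$ in $\Delta_\sigma^n$, while Proposition \ref{value of beta} contributes a second factor $(-1)^{q_\sigma(x_S)}$; since $(-1)^{2q_\sigma(x_S)}=1$ these cancel. Hence the problem reduces to the sign-free expression $\sum_{S\ne\emptyset} i_{x_S}(x)$, where $x_S:=\sum_{j\in S}x_j$, and what remains is a purely combinatorial count, modulo $8$, of the partial sums meeting $x$ oddly.

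I would then carry out the induction. The base case $n=1$ is immediate, since $\beta_\sigma\Phi([x_1])(x)=i_{x_1}(x)=2^{0}i_{x_1}(x)$. For the inductive step I would apply the homomorphism $\beta_\sigma\Phi(-)(x)$ to the four-term recursion of Lemma \ref{Delta_sigma}, which rewrites $\Delta_\sigma$ on $n+1$ arguments through $\Delta_\sigma$ on $n$ and on $n-1$ arguments, and then substitute the inductive hypothesis into each of the four terms. Writing $a=i_{x_n}(x)$ and $b=i_{x_{n+1}}(x)$, and using the bilinearity of the mod $2$ intersection form in the form $i_{x_n+x_{n+1}}(x)=a+b-2ab$, the four contributions combine into a single multiple of $2^{n-1}\prod_{j=1}^{n-1}i_{x_j}(x)$, so that the induction can be expressed as one arithmetic identity in $\mathbf{Z}_8$.

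The main obstacle is precisely this final reduction modulo $8$: I must show the assembled multiple of $2^{n-1}\prod_{j=1}^{n-1}i_{x_j}(x)$ equals the target $2^{n}\prod_{j=1}^{n+1}i_{x_j}(x)$. This is where the accumulated factor $2^{n-1}$ interacts with the $\mathbf{Z}_8$-coefficients, and the quadratic term $-2ab$ coming from the intersection relation is exactly what is responsible for turning the raw count into the product $\prod_j i_{x_j}(x)$; for large $n$ the powers of two absorb discrepancies, since $2^{n-1}\equiv 0 \pmod 8$ once $n\ge 4$, and there both sides vanish. I would therefore dispose of the small cases $n=1,2,3$ by direct evaluation of the subset sum $\sum_{S\ne\emptyset}i_{x_S}(x)$ above (equivalently, by reading off the relevant Brown invariants as in Proposition \ref{value of beta}), and close the induction for $n\ge 4$. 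As a transparent cross-check on the factor $2^{n-1}$, I would also evaluate the subset sum through the generating-function identity governed by $\prod_{j}\bigl(1+(-1)^{i_{x_j}(x)}\bigr)$, which exhibits the power of two directly and makes the dependence on which $x_j$ meet $x$ oddly explicit.
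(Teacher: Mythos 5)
Your skeleton is the same as the paper's: evaluate on a single class via Proposition \ref{value of beta}, then induct on $n$ through the recursion of Lemma \ref{Delta_sigma}. But your own first reduction, which is correct, already refutes the identity you are trying to prove, and you paper over this with a false arithmetic step. Indeed, with $\Delta_\sigma^n$ as defined in the paper (every nonempty subset-sum term $[x_S]$, $x_S=\sum_{j\in S}x_j$, entering with coefficient $(-1)^{q_\sigma(x_S)}$), the two sign factors do cancel, and $\beta_\sigma\Phi(\Delta_\sigma^n(x_1,\dots,x_n))(x)=\sum_{\emptyset\neq S}i_{x_S}(x)$ exactly as you say. Writing $\epsilon_j=i_{x_j}(x)\in\{0,1\}$, this count has a closed form: the number of nonempty $S$ with $\sum_{j\in S}\epsilon_j$ odd is $2^{n-1}\bigl(1-\prod_j(1-\epsilon_j)\bigr)$, i.e.\ $2^{n-1}$ as soon as \emph{one} $\epsilon_j=1$ --- the indicator of ``some $x_j$ meets $x$ oddly,'' not of ``all do.'' Your proposed cross-check $\prod_j\bigl(1+(-1)^{\epsilon_j}\bigr)$ computes (even count)$-$(odd count) and says precisely this, so it would expose, not confirm, the target formula. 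Concretely, for $n=2$, $x_1=B_1$, $x_2=B_2$, $x=A_1$: the sign-free sum is $i_{B_1}(A_1)+i_{B_2}(A_1)+i_{B_1+B_2}(A_1)=1+0+1=2$, while $2\,i_{B_1}(A_1)\,i_{B_2}(A_1)=0$ in $\mathbf{Z}_8$. So the small cases $n=2,3$ that you propose to ``dispose of by direct evaluation'' in fact fail, and no induction can rescue them: substituting $i_{x_n+x_{n+1}}(x)=a+b-2ab$ into the four-term recursion yields the bracket $(a+b-2ab)+a+b-1=2a+2b-2ab-1$, which equals $-1$ at $a=b=0$ and $1$ at $a=b=1$, never $2ab$; your claim that the quadratic term $-2ab$ ``turns the raw count into the product'' is exactly the step that fails, and the powers of two only absorb the discrepancy once $n\ge4$.

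The missing idea is that the product formula is an inclusion--exclusion identity and requires alternating signs that survive the cancellation: in $\mathbf{Z}$ one has $\sum_{\emptyset\neq S}(-1)^{|S|+1}\epsilon_S=2^{n-1}\prod_{j=1}^n\epsilon_j$, where $\epsilon_S\in\{0,1\}$ is the parity of $\sum_{j\in S}\epsilon_j$. So the lemma holds if each size-$|S|$ term of $\Delta_\sigma^n$ carries the coefficient $(-1)^{|S|+1}(-1)^{q_\sigma(x_S)}$; then Proposition \ref{value of beta} cancels only the $q_\sigma$-sign, the residual $(-1)^{|S|+1}$ remains, and the clean three-term recursion $\Delta(\dots,x_n)+\Delta(\dots,x_{n+1})-\Delta(\dots,x_n+x_{n+1})$ together with the identity $a+b-(a+b-2ab)=2ab$ closes the induction for every $n$ with no case analysis. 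For what it is worth, the paper's own one-line induction asserts the same false bracket identity $i_{x_n+x_{n+1}}+i_{x_n}+i_{x_{n+1}}-1=2\,i_{x_n}i_{x_{n+1}}$, so the defect you inherited lies in the source's sign convention for $\Delta_\sigma^n$ (and correspondingly in Lemma \ref{Delta_sigma}, whose four-term form is tied to the all-plus convention); but a blind proof still has to detect that the statement, with the definitions as printed, fails at $n=2$ --- and the verification you deferred to the end is precisely where that detection would have happened.
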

\begin{proof}
In Proposition \ref{value of beta}, we proved that $\beta_\sigma\Phi((-1)^{q_\sigma(x)}x_1)(x)=i_{x_1}(x)$. Assume that for $n-1$ the equation holds. By the Lemma \ref{Delta_sigma}, we have 
\begin{align*}
\beta_\sigma\Phi(\Delta_\sigma^n(x_1,x_2,\cdots,x_n,x_{n+1}))(x)&=
2^{n-1}\prod_{j=1}^{n-1} i_{x_j}(x)(i_{x_n+x_{n+1}}(x)+i_{x_n}(x)+i_{x_{n+1}}(x)-1)\\
&=2^n\prod_{j=1}^{n+1} i_{x_j}(x).
\end{align*}
This proves the lemma.
\end{proof}
Denote the homology classes $X_n$ by $X_{2j-1}:=A_j$, and $X_{2j}=B_j$ for $j=1,2,\cdots,g$. For convenience, we denote $X_{n+2g}=X_n$ for $n=1,2,\cdots 2g$. Define the surjective homomorphism
\[
\Psi:\Map(H_1(\Sigma_g;\mathbf{Z}_2);\mathbf{Z}_8)\to
\mathbf{Z}_8^{2g}\oplus\mathbf{Z}_8^{\binom{2g}{2}}\oplus\mathbf{Z}_8^{\binom{2g}{3}}
\]
by
\[
\Psi(f):=(\{f(X_{i_1})\}_{i_1=1}^{2g}, \{f(X_{i_1}+X_{i_2})\}_{1\le
i_1\le i_2 \le 2g}, \{f(X_{i_1}+X_{i_2}+X_{i_3})\}_{1\le i_1\le i_2\le i_3\le 2g}).
\]
\begin{lemma}
\[
\Im(\Psi\beta_\sigma)=\mathbf{Z}_8^{2g}\oplus
2\mathbf{Z}_8^{\binom{2g}{2}}\oplus
4\mathbf{Z}_8^{\binom{2g}{3}}.
\]
\end{lemma}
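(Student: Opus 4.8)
The plan is to identify $\Im(\Psi\beta_\sigma)$ with an explicit subgroup generated by the vectors $\Psi(i_v)$, and then to disentangle that subgroup by the weight filtration coming from the cardinality $|S|$ of the index sets $S\subset\{1,\dots,2g\}$ appearing in $\Psi$. First I would record that, since $\mathcal{M}_{g,1}[2]$ is generated by the squares $t_C^2$ of Dehn twists along non-separating curves (Humphries) and $\beta_\sigma(t_C^2)=(-1)^{q_\sigma(C)}i_{[C]}$ by Proposition \ref{value of beta}, the image $\Im(\Psi\beta_\sigma)$ is exactly the $\mathbf{Z}_8$-submodule generated by $\{\Psi(i_v)\mid v\in H_1(\Sigma_g;\mathbf{Z}_2)\setminus\{0\}\}$, the signs $(-1)^{q_\sigma(C)}$ being irrelevant for the generated submodule. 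Writing $a_i:=v\cdot X_i\in\mathbf{Z}_2$, the $S$-coordinate of $\Psi(i_v)$ is $i_v(\sum_{i\in S}X_i)=[\,\sum_{i\in S}a_i\ \mathrm{odd}\,]$, so each generator is controlled by the single functional $a=(a_i)$, which ranges over all of $\mathbf{Z}_2^{2g}$ as $v$ ranges over $H_1(\Sigma_g;\mathbf{Z}_2)$ by non-degeneracy of the intersection form.

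The key structural input is the quadratic identity $i_v(x+y)=i_v(x)+i_v(y)-2\,i_v(x)i_v(y)$, which is precisely what makes the $\Delta_\sigma$-combinations collapse to products in Lemma \ref{beta-Delta}. Concretely I would apply Lemma \ref{beta-Delta} with $x_1,\dots,x_n$ taken to be the symplectic partners of distinct basis classes $X_{m_1},\dots,X_{m_n}$ (so e.g. the partner of $A_j$ is $B_j$), whence $i_{x_j}(\sum_{i\in S}X_i)=[\,m_j\in S\,]$ and therefore $\Psi\bigl(\beta_\sigma\Phi(\Delta_\sigma^n(x_1,\dots,x_n))\bigr)$ has $S$-coordinate $2^{n-1}[\,\{m_1,\dots,m_n\}\subseteq S\,]$. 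Taking $n=3$ produces the clean class $4\,e_{\{m_1,m_2,m_3\}}$, with no single- or pair-coordinates and no other triples in the range $|S|\le 3$; these already span the whole summand $4\mathbf{Z}_8^{\binom{2g}{3}}$. Taking $n=2$ gives $2\,e_{\{m_1,m_2\}}$ together with a tail of $2$'s on the triples containing $\{m_1,m_2\}$, and $n=1$ gives $e_{\{m_1\}}$ plus tails on the higher-weight slots; these tails are removed by subtracting the higher-weight generators already constructed, i.e. by the inclusion--exclusion recursion of Lemma \ref{Delta_sigma}. This triangular bookkeeping is what produces the standard generators of $\mathbf{Z}_8^{2g}\oplus 2\mathbf{Z}_8^{\binom{2g}{2}}\oplus 4\mathbf{Z}_8^{\binom{2g}{3}}$, and simultaneously shows that nothing larger can occur.

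The delicate point is that the raw vectors $\Psi(i_v)$ are not diagonal across the three weight levels: a single $\Psi(i_v)$ already has odd entries in pair slots, so the $1,2,4$ divisibilities cannot be read off directly. I would therefore pass to the associated graded of the filtration by $|S|$, equivalently perform the integral, determinant-$\pm1$ unipotent change of coordinates on the target given by the $\Delta_0$-combinations: replace $f(X_i+X_j)$ by $f(X_i+X_j)-f(X_i)-f(X_j)$ and $f(X_i+X_j+X_k)$ by the corresponding alternating sum. Under this transformation, and using the quadratic identity above, each generator becomes exactly $\bigl(a_i;\,-2a_ia_j;\,4a_ia_ja_k\bigr)$, after which the image is immediate: the linear terms $a_i$ span $\mathbf{Z}_8^{2g}$, the monomials $a_ia_j$ over $a\in\mathbf{Z}_2^{2g}$ isolate each $2e_{\{i,j\}}$, and the monomials $a_ia_ja_k$ isolate each $4e_{\{i,j,k\}}$, giving the asserted equality and the order $|\mathbf{Z}_8^{2g}\oplus\mathbf{Z}_4^{\binom{2g}{2}}\oplus\mathbf{Z}_2^{\binom{2g}{3}}|$ needed for the lower bound. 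I expect the only genuinely fiddly steps to be confirming that the symplectic partners act as coordinate indicators and that the tail-cancellations are consistent with the hidden signs $(-1)^{q_\sigma}$ inside $\Delta_\sigma$; once the weight filtration is used to organize the calculation, both containments fall out together.
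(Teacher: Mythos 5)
Your proposal is correct and follows the same skeleton as the paper's proof: reduce to the generators $t_C^2$ via Humphries, compute $\beta_\sigma(t_C^2)=(-1)^{q_\sigma(C)}i_{[C]}$ by Proposition \ref{value of beta}, feed the symplectic partners of distinct basis classes into Lemma \ref{beta-Delta}, and organize everything by the weight $|S|$. But your third paragraph is more than a stylistic variant --- it repairs a genuine imprecision in the paper's own proof. The paper's value tables assert that the $n=1$ generators satisfy $f(X_l+X_m)=f(X_l+X_m+X_n)=0$ and that the $n=2$ generators satisfy $f(X_l+X_m+X_n)=0$; as raw evaluations these are false (e.g.\ $i_{A_1}(B_1+A_2)=1$, and the $n=2$ class takes the value $2$ on every triple slot containing both partner indices). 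Consequently a raw vector $\Psi(i_v)$ has odd entries in pair slots, so the coordinate-wise equality $\Im(\Psi\beta_\sigma)=\mathbf{Z}_8^{2g}\oplus 2\mathbf{Z}_8^{\binom{2g}{2}}\oplus 4\mathbf{Z}_8^{\binom{2g}{3}}$ holds only after the unipotent, determinant-$\pm1$ change of target coordinates you perform (replacing $f(X_i+X_j)$ by $f(X_i+X_j)-f(X_i)-f(X_j)$ and triples by the alternating sum), under which the generators become exactly $\bigl(a_i;\,-2a_ia_j;\,4a_ia_ja_k\bigr)$ and your triangular elimination over $a=e_k$, $e_j+e_k$, $e_i+e_j+e_k$ establishes both containments. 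Since that change is an automorphism of $\mathbf{Z}_8^{2g+\binom{2g}{2}+\binom{2g}{3}}$, it preserves the order $8^{2g}\,4^{\binom{2g}{2}}\,2^{\binom{2g}{3}}$ of the image, so the lower bound and Proposition \ref{Z_8-module} downstream are unaffected; this is evidently what the paper intends, and your version is the precise form in which the lemma is true.

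One caveat inside your own write-up: the sentence in your second paragraph claiming the tails ``are removed by subtracting the higher-weight generators already constructed'' is not literally correct in the raw coordinates --- an odd entry of an $n=1$ generator in a pair slot cannot be cancelled by the even pair entries of the $n=2$ generators, and a $2$ in a triple slot cannot be cancelled by the $4e_T$ classes. Your ``delicate point'' paragraph recognizes exactly this failure and the coordinate-change argument supersedes the earlier sentence, so the final proof stands; just strike or rephrase that intermediate claim.
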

\begin{proof}
We examine the value of $\Psi\beta_\sigma$ on $\Phi(\Delta_\sigma(X_{i_1},X_{i_2},\cdots,X_{i_n}))\in H_1(\mathcal{M}_{g,1}[2];\mathbf{Z})$, using Lemma \ref{beta-Delta}.

For $f=\beta_\sigma\Phi(\Delta_\sigma(X_{i_1}))$ where $1\le i_1\le 2g$, we have
\begin{gather*}
f(X_l)=
\begin{cases}
1, \text{ if } X_l=X_{i_1+g},\\
0, \text{ otherwise},
\end{cases}
f(X_l+X_m)=f(X_l+X_m+X_n)=0.
\end{gather*}
For $f=\beta_\sigma\Phi(\Delta_\sigma(X_{i_1},X_{i_2}))$ where $1\le i_1<i_2\le 2g$, 
\begin{gather*}
f(X_l)=0,\\
f(X_l+X_m)=
\begin{cases}
2, \text{ if } \{X_l,X_m\}=\{X_{i_1+g},X_{i_2+g}\},\\
0, \text{ otherwise},
\end{cases}
f(X_l+X_m+X_n)=0.
\end{gather*}
For $f=\beta_\sigma\Phi(\Delta_\sigma(X_{i_1},X_{i_2},X_{i_3}))$ where $1\le i_1<i_2<i_3\le 2g$,
\begin{gather*}
f(X_l)=f(X_l+X_m)=0,\\
f(X_l+X_m+X_n)=
\begin{cases}
4, \text{ if } \{X_{l},X_{m},X_{n}\}=\{X_{i_1+g},X_{i_2+g},X_{i_3+g}\},\\
0, \text{ otherwise}.
\end{cases}
\end{gather*}
We also have $\beta_\sigma\Phi(\Delta_\sigma(X_{i_1},X_{i_2},\cdots,X_{i_n}))=0$ for $n\ge4$. 

Since $\Phi$ is surjective, we have determined the image of the homomorphism $\Psi\beta_\sigma$.
\end{proof}
By this lemma, we obtain the lower bound
\[
|H_1(\mathcal{M}_{g,1}[2];\mathbf{Z})|\ge |\mathbf{Z}_8^{2g}\oplus\mathbf{Z}_4^{\binom{2g}{2}}\oplus\mathbf{Z}_2^{\binom{2g}{3}}|.
\]
Now, we determine the abelianization $H_1(\mathcal{M}_{g,1}[2];\mathbf{Z})$ as a $\mathbf{Z}$-module. 
\begin{proposition}\label{Z_8-module}
For $g\ge3$,
\[
H_1(\mathcal{M}_{g,1}[2];\mathbf{Z})\cong \mathbf{Z}_8^{2g}\oplus\mathbf{Z}_4^{\binom{2g}{2}}\oplus\mathbf{Z}_2^{\binom{2g}{3}}.
\]
\end{proposition}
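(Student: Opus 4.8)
The plan is to pin down $H_1(\mathcal{M}_{g,1}[2];\mathbf{Z})$ by squeezing its order between the upper bound of Subsection \ref{upper bound} and the lower bound of Subsection \ref{lower bound}, checking that the two orders coincide exactly, and then upgrading this numerical equality to an isomorphism of groups using the surjection coming from $\Psi\beta_\sigma$.

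First I would evaluate the upper bound
\[
|H_1(\mathcal{M}_{g,1}[2];\mathbf{Z})|\le|B_{g,1}^3/\br{1}|\,|H_1(\Gamma_g[2];\mathbf{Z})|.
\]
By Corollary \ref{abel-symp} in the even case $d=2$ one has $H_1(\Gamma_g[2];\mathbf{Z})\cong\mathbf{Z}_2^{2g^2-g}\oplus\mathbf{Z}_4^{2g}$, so $|H_1(\Gamma_g[2];\mathbf{Z})|=2^{2g^2+3g}$. For the Johnson module $B_{g,1}^3$ of Theorem \ref{Johnson} I would use that, after fixing the symplectic basis $\{\bar{A}_i,\bar{B}_i\}$, the squarefree monomials of degree at most $3$ in these generators together with the unit $1$ form a $\mathbf{Z}_2$-basis, giving $\dim_{\mathbf{Z}_2}B_{g,1}^3=1+2g+\binom{2g}{2}+\binom{2g}{3}$. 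Since $1$ is $\mathcal{M}_{g,1}$-invariant, $\br{1}$ is one-dimensional, whence $|B_{g,1}^3/\br{1}|=2^{2g+\binom{2g}{2}+\binom{2g}{3}}$. Using $\binom{2g}{2}=2g^2-g$, the upper bound becomes $2^{\,4g^2+4g+\binom{2g}{3}}$.

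Next I would compare with the lower bound. The group $\mathbf{Z}_8^{2g}\oplus\mathbf{Z}_4^{\binom{2g}{2}}\oplus\mathbf{Z}_2^{\binom{2g}{3}}$ has order
\[
8^{2g}\cdot4^{\binom{2g}{2}}\cdot2^{\binom{2g}{3}}=2^{\,6g+2\binom{2g}{2}+\binom{2g}{3}}=2^{\,4g^2+4g+\binom{2g}{3}},
\]
which agrees exactly with the upper bound. Hence $|H_1(\mathcal{M}_{g,1}[2];\mathbf{Z})|=2^{\,4g^2+4g+\binom{2g}{3}}$. To obtain the group structure rather than merely the order, I would invoke the homomorphism $\Psi\beta_\sigma$, whose image is $\mathbf{Z}_8^{2g}\oplus2\mathbf{Z}_8^{\binom{2g}{2}}\oplus4\mathbf{Z}_8^{\binom{2g}{3}}\cong\mathbf{Z}_8^{2g}\oplus\mathbf{Z}_4^{\binom{2g}{2}}\oplus\mathbf{Z}_2^{\binom{2g}{3}}$. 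This gives a surjection of $H_1(\mathcal{M}_{g,1}[2];\mathbf{Z})$ onto a finite abelian group of the same order as $H_1(\mathcal{M}_{g,1}[2];\mathbf{Z})$ itself, so it must be an isomorphism, yielding the claim.

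The main obstacle is the \emph{exact} coincidence of the two order counts: the argument is a squeeze, so it collapses unless $\dim_{\mathbf{Z}_2}B_{g,1}^3$ and the torsion of $H_1(\Gamma_g[2];\mathbf{Z})$ are both computed precisely. Verifying that the degree-$\le 3$ squarefree monomials genuinely form a basis of $B_{g,1}^3$ (so that the quotient by $\br{1}$ drops the dimension by exactly one) is the delicate bookkeeping step; a secondary subtlety is that the matching of orders by itself determines only $|H_1|$, and it is precisely the surjection furnished by $\Psi\beta_\sigma$ onto a group abstractly isomorphic to the target that promotes the numerical equality to an isomorphism of abelian groups.
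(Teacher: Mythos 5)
Your proposal is correct and takes essentially the same route as the paper: the paper also squeezes $|H_1(\mathcal{M}_{g,1}[2];\mathbf{Z})|$ between the upper bound $|B_{g,1}^3/\br{1}|\,|H_1(\Gamma_g[2];\mathbf{Z})|$ and the lower bound coming from $\Psi\beta_\sigma$, then concludes that the surjection $\Psi\beta_\sigma$ onto $\mathbf{Z}_8^{2g}\oplus2\mathbf{Z}_8^{\binom{2g}{2}}\oplus4\mathbf{Z}_8^{\binom{2g}{3}}$ must be an isomorphism. The order bookkeeping you make explicit (the squarefree-monomial basis giving $\dim_{\mathbf{Z}_2}B_{g,1}^3=1+2g+\binom{2g}{2}+\binom{2g}{3}$, and $|H_1(\Gamma_g[2];\mathbf{Z})|=2^{2g^2+3g}$ from Corollary \ref{abel-symp}, both yielding the common value $2^{4g^2+4g+\binom{2g}{3}}$) is exactly what the paper compresses into ``it is easy to see,'' and it checks out.
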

\begin{proof}
Denote by $\br{1}$ the cyclic group generated by $1\in B_{g,r}^3$. We have 
\[
|\mathbf{Z}_8^{2g}\oplus\mathbf{Z}_4^{\binom{2g}{2}}\oplus\mathbf{Z}_2^{\binom{2g}{3}}|\le|H_1(\mathcal{M}_{g,1}[2];\mathbf{Z})|\le|B_{g,1}^3/\br{1}||H_1(\Gamma_g[2];\mathbf{Z})|.
\]
By the definition of $B_{g,1}^3$, it is easy to see that 
\[
|B_{g,1}^3/\br{1}||H_1(\Gamma_g[2];\mathbf{Z})|=|\mathbf{Z}_8^{2g}\oplus\mathbf{Z}_4^{\binom{2g}{2}}\oplus\mathbf{Z}_2^{\binom{2g}{3}}|.
\]
By comparing the order of groups, we see that the surjective homomorphism 
\[
\Psi\beta_\sigma:H_1(\mathcal{M}_{g,1}[2];\mathbf{Z})\to \mathbf{Z}_8^{2g}\oplus2\mathbf{Z}_8^{\binom{2g}{2}}\oplus4\mathbf{Z}_8^{\binom{2g}{3}}
\]
is isomorphic. 
\end{proof}
\begin{remark}
In particular, we have $\Ker\iota=\br{1}$ when $r=1$. 
\end{remark}

Now, we prove Theorem \ref{abel-mcg}.
\begin{proof}[proof of Theorem \ref{abel-mcg}]
We compute the kernel of the homomorphism $\Phi:\mathbf{Z}_8[H_1(\Sigma_{g,1};\mathbf{Z}_2)]\to H_1(\mathcal{M}_{g,1}[2];\mathbf{Z})$.

Since $\beta_\sigma$ is injective, $\Ker\beta_\sigma\Phi=\Ker\Phi$. Hence, by Lemma \ref{beta-Delta} we have
\[
4\Delta_\sigma^2(x_1,x_2),2\Delta_\sigma^3(x_1,x_2,x_3),\Delta_\sigma^n(x_1,x_2,\cdots,x_n)\in\Ker\Phi,
\]
for $n\ge3$ and $\{x_i\}_{i=1}^n\subset H_1(\Sigma_{g,1};\mathbf{Z})$. By Lemma \ref{Delta_sigma}, it is easy to see that
\[
4\Delta_\sigma^2(x_1,x_2),2\Delta_\sigma^3(x_1,x_2,x_3),\Delta_\sigma^n(x_1,x_2,\cdots,x_n)
\]
is generated by
\[
4\Delta_\sigma^2(X_{i_1},X_{i_2}),2\Delta_\sigma^3(X_{i_1},X_{i_2},X_{i_3}),\Delta_\sigma^n(X_{i_1},X_{i_2},\cdots,X_{i_n}),
\]
where $\{X_i\}_{i=1}^{2g}\subset H_1(\Sigma_{g,1};\mathbf{Z})$ is the symplectic basis. Hence, the submodule $L_{g,1}$ is generated by these elements. An easy calculation shows that
\[
|\mathbf{Z}_8[H_1(\Sigma_{g,1};\mathbf{Z}_2)]/L_{g,1}|=|\mathbf{Z}_8^{2g}\oplus\mathbf{Z}_4^{\binom{2g}{2}}\oplus \mathbf{Z}_2^{\binom{2g}{3}}|=|H_1(\mathcal{M}_{g,1}[2];\mathbf{Z})|.
\]
Therefore, the surjective homomorphism
\[
\Phi:\mathbf{Z}_8[H_1(\Sigma_{g,1};\mathbf{Z}_2)]/L_{g,1}\,\to H_1(\mathcal{M}_{g,1}[2];\mathbf{Z})
\]
is isomorphic. 

If we choose the spin structure $\sigma_0$ such that its quadratic function $q_{\sigma_0}$ satisfies $q_{\sigma_0}(X_i)=0$, we have
\[
q_{\sigma_0}(x_1+x_2+\cdots+x_n)=\sum_{1\le i<j\le n}(x_i\cdot x_j) \mod2=I(x_1,x_2,\cdots,x_n).
\]
Hence we have $\Delta_0=\Delta_{\sigma_0}$, and Theorem \ref{abel-mcg} is proved. 
\end{proof}
\subsection{The abelianization of the level 2 mapping class group of a closed surface}\label{closed}
In this subsection, we determine the abelianization of the level 2 mapping class group of a closed surface $\Sigma_g$. It is well-known that the homomorphism
\[
\mathcal{M}_{g,1}[2]\to\mathcal{M}_g[2]
\]
is surjective. As stated in Johnson \cite{johnson1985stg} Section 6, the kernel $\Ker(H_1(\mathcal{I}_{g,1};\mathbf{Z})_{\mathcal{M}_{g,1}[2]}\to H_1(\mathcal{I}_g;\mathbf{Z})_{\mathcal{M}_g[2]})$ is generated by
\[
\sum_{i=1}^g\overline{A}_i\overline{B}_i,\ \sum_{i=1}^g\overline{A}_i\overline{B}_i\overline{X}\in B_{g,1}^3\quad \text{ for } X=A_1,B_1,\cdots, A_g, B_g.
\]
Hence, $\Ker(H_1(\mathcal{M}_{g,1}[2];\mathbf{Z})\to H_1(\mathcal{M}_g[2];\mathbf{Z}))$ is generated by the image of these elements under $\iota$. Therefore, $H_1(\mathcal{M}_g[2];\mathbf{Z})$ is isomorphic to the quotient of $\mathbf{Z}_8[H_1(\Sigma_{g,1};\mathbf{Z}_2)]/L_{g,1}$ by the image of these elements under $\iota$.  

We write $\iota(\overline{A}_i\overline{B}_i)$, $\iota(\overline{A}_i\overline{B}_i\overline{X})$ as elements of $\mathbf{Z}_8[H_1(\Sigma_{g,1};\mathbf{Z}_2)]/L_{g,1}$. As we saw in Lemma \ref{kernel iota}, we have
\[
\iota(\overline{A}_1\overline{B}_1)=2\Phi\Delta_0^2(A_1,B_1)+4\br{A_1}+4\br{B_1},
\]
and
\begin{align*}
\iota(\overline{A}_1\overline{B}_1(\overline{B_2+1}))&=\br{B_1}+\br{A_1}+\br{B_1+B_2}+\br{A_1+B_1}+\br{A_1+B_1+B_2}+\br{A_1+B_2}-\br{B_2}\\
&=-\Phi\Delta_0^3(A_1,B_1,B_2)+2\Phi\Delta_0^2(A_1,B_2)+2\Phi\Delta_0^2(B_1,B_2)-4\br{B_2}\\
&=\Phi\Delta_0^3(A_1,B_1,B_2)+2\Phi\Delta_0^2(A_1,B_2)+2\Phi\Delta_0^2(B_1,B_2)+4\br{B_2}.
\end{align*}
Hence for $X=A_1,B_1,\cdots,A_g,B_g$, we have 
\begin{align*}
\iota(\overline{A}_i\overline{B}_i)&=\Phi\{2\Delta_0^2(A_i,B_i)+4[A_i]+4[B_i]\},\\
\iota(\overline{A}_i\overline{B}_i\overline{X})&=\Phi\{\Delta_0^3(A_i,B_i,X)+2\Delta_0^2(A_i,X)+2\Delta_0^2(A_i,B_i)+2\Delta_0^2(B_i,X)+4[A_i]+4[B_i]+4[X]\}.
\end{align*}
\begin{proposition}
Let $g\ge3$. Denote by $L_g$ the submodule of $\mathbf{Z}_8[H_1(\Sigma_g;\mathbf{Z}_2)]$ generated by 
\begin{gather*}
[0],\,4\Delta_0^2(x_1,x_2),\,2\Delta_0^3(x_1,x_2,x_3),\,\Delta^n(x_1,x_2,\cdots,x_n),\\
\sum_{i=1}^g\{2\Delta_0^2(A_i,B_i)+4[A_i]+4[B_i]\},\\
\sum_{i=1}^g\{\Delta_0^3(A_i,B_i,X)+2\Delta_0^2(A_i,X)+2\Delta_0^2(B_i,X)+4[X]\},
\end{gather*}
for $\{x_i\}_{i=1}^n\subset H_1(\Sigma_g;\mathbf{Z}_2)$ and $X=A_1,B_1,\cdots,A_g,B_g$. Then, we have 
\[
\mathbf{Z}_8[H_1(\Sigma_g;\mathbf{Z}_2)]/L_g\cong H_1(\mathcal{M}_g[2];\mathbf{Z}).
\]
\end{proposition}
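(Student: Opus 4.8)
The plan is to deduce the closed case from Theorem \ref{abel-mcg} by studying the surjection $\mathcal{M}_{g,1}[2]\to\mathcal{M}_g[2]$ induced by capping off the boundary with a disk. Being surjective on groups, it induces a surjection $\pi\colon H_1(\mathcal{M}_{g,1}[2];\mathbf{Z})\to H_1(\mathcal{M}_g[2];\mathbf{Z})$, so the whole problem reduces to identifying $\Ker\pi$ and transporting it through the isomorphism $\Phi\colon\mathbf{Z}_8[H_1(\Sigma_{g,1};\mathbf{Z}_2)]/L_{g,1}\xrightarrow{\cong}H_1(\mathcal{M}_{g,1}[2];\mathbf{Z})$ established in Theorem \ref{abel-mcg}.

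First I would compare the two coinvariant exact sequences coming from Theorem \ref{Johnson} in the commutative ladder
\[
\begin{CD}
B_{g,1}^3 @>\iota>> H_1(\mathcal{M}_{g,1}[2];\mathbf{Z}) @>>> H_1(\Gamma_g[2];\mathbf{Z}) @>>> 0\\
@VfVV @V\pi VV @| @|\\
B_g^3 @>\iota_0>> H_1(\mathcal{M}_g[2];\mathbf{Z}) @>>> H_1(\Gamma_g[2];\mathbf{Z}) @>>> 0
\end{CD}
\]
in which the right-hand vertical map is the identity, since capping does not change the symplectic quotient. Here $f$ is the surjection $B_{g,1}^3\to B_g^3$ whose kernel $K$ is generated by $\sum_{i=1}^g\overline{A}_i\overline{B}_i$ and $\sum_{i=1}^g\overline{A}_i\overline{B}_i\overline{X}$, by Johnson's Section 6 (these being exactly the image of $B^1\to B_{g,1}^3$, $x\mapsto x\alpha$, whose cokernel is $B_g^3$). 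The inclusion $\iota(K)\subseteq\Ker\pi$ is immediate from commutativity. For the reverse inclusion I would invoke the Birman exact sequence for capping, whose kernel $N\cong\pi_1(UT\Sigma_g)$ consists of point-pushes and the boundary Dehn twist, hence lies in $\mathcal{I}_{g,1}$; the five-term exact sequence then identifies $\Ker\pi$ with the image of $H_1(N)$ in $H_1(\mathcal{M}_{g,1}[2];\mathbf{Z})$, which factors through $\iota$ and is identified by Johnson's Section 6 with $K$. This gives $\Ker\pi=\iota(K)$.

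It then remains to express $\iota(K)$ under the isomorphism $\Phi$. I would substitute the values of $\iota$ on the generators of $K$ computed just before the statement, namely $\iota(\sum_i\overline{A}_i\overline{B}_i)=\Phi\sum_i\{2\Delta_0^2(A_i,B_i)+4[A_i]+4[B_i]\}$ and the displayed expression for $\iota(\sum_i\overline{A}_i\overline{B}_i\overline{X})$. Hence $H_1(\mathcal{M}_g[2];\mathbf{Z})$ is the quotient of $\mathbf{Z}_8[H_1(\Sigma_{g,1};\mathbf{Z}_2)]/L_{g,1}$ by the submodule generated by these two families. Identifying $H_1(\Sigma_{g,1};\mathbf{Z}_2)\cong H_1(\Sigma_g;\mathbf{Z}_2)$ via the symplectic basis, the submodule generated by $L_{g,1}$ together with the two new families is exactly $L_g$; here one uses that $\sum_i\{2\Delta_0^2(A_i,B_i)+4[A_i]+4[B_i]\}$ is itself a generator to absorb the corresponding terms of the second family, recovering the cleaner form listed in the statement.

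The main obstacle is the middle step: establishing $\Ker\pi=\iota(K)$ rather than only the easy inclusion $\iota(K)\subseteq\Ker\pi$. Since $\iota$ is not injective for the closed surface (indeed $\Ker\iota=\br{1}$ for $r=1$ by the Remark following Proposition \ref{Z_8-module}), a naive chase around the ladder places a lift $\eta$ of a class in $\Ker\pi$ only in $f^{-1}(\Ker\iota_0)$, which is too weak; the honest argument must route through the Birman and five-term sequences and Johnson's explicit description of the image of $H_1(N)$, where all the geometric input is concentrated.
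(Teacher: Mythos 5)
Your proposal is correct and takes essentially the same route as the paper: reduce to the kernel of the surjection $H_1(\mathcal{M}_{g,1}[2];\mathbf{Z})\to H_1(\mathcal{M}_g[2];\mathbf{Z})$, identify it via Johnson's Section 6 with the $\iota$-image of the kernel of $B_{g,1}^3\to B_g^3$ generated by $\sum_i\overline{A}_i\overline{B}_i$ and $\sum_i\overline{A}_i\overline{B}_i\overline{X}$, and rewrite these under $\Phi$ using the $\Delta_0$-computations to obtain $L_g$. Your Birman/five-term argument establishing $\Ker\pi=\iota(K)$ (including the observation that the naive diagram chase is insufficient) simply makes explicit the step the paper compresses into a single ``Hence''.
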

\newpage
\section{The abelianization of the level $d$ mapping class group for odd $d$}\label{oddlevel}
In this section, we prove Theorem \ref{abel-oddlevel}. The exact sequence 
\[
1\to \mathcal{I}_{g,r}\to \mathcal{M}_{g,r}[d]\to\Gamma_g[d]\to1
\]
plays an important role. By the Lyndon-Hochschild-Serre spectral sequence, we have the exact sequence
\[
H_1(\mathcal{I}_{g,r};\mathbf{Z})\to H_1(\mathcal{M}_{g,r}[d];\mathbf{Z})\to H_1(\Gamma_g[d];\mathbf{Z})\to 0.
\]
\subsection{$\Mod d$ reduction of inclusion homomorphism}
\begin{lemma}\label{moddred}
Let $g\ge3$. The homomorphism $H_1(\mathcal{I}_{g,r};\mathbf{Z})\to H_1(\mathcal{M}_{g,r}[d];\mathbf{Z})$ factors through $H_1(\mathcal{I}_{g,r};\mathbf{Z})\otimes\mathbf{Z}_d$. 
\end{lemma}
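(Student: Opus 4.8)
The plan is to reduce the statement to a computation on bounding pair maps and then exploit that disjoint Dehn twists commute. Write $\iota\colon H_1(\mathcal{I}_{g,r};\mathbf{Z})\to H_1(\mathcal{M}_{g,r}[d];\mathbf{Z})$ for the homomorphism in the statement. Since for abelian groups $-\otimes\mathbf{Z}_d$ is the cokernel of multiplication by $d$, the map $\iota$ factors through $H_1(\mathcal{I}_{g,r};\mathbf{Z})\otimes\mathbf{Z}_d$ precisely when $d\,\iota=0$, that is, when $d\,\iota(x)=0$ for every $x$. By Johnson's theorem that, for $g\ge3$, the Torelli group $\mathcal{I}_{g,r}$ is generated by bounding pair maps, the group $H_1(\mathcal{I}_{g,r};\mathbf{Z})$ is generated by the classes of such maps; by $\mathbf{Z}$-linearity it therefore suffices to prove $d\,\iota([\varphi])=0$ when $\varphi=t_Ct_{C'}^{-1}$ is a single bounding pair map.

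So first I would fix such a $\varphi$, where $C,C'\subset\Sigma_{g,r}$ are disjoint non-separating simple closed curves oriented so that $[C]=[C']\in H_1(\Sigma_{g,r};\mathbf{Z})$. Because $C$ and $C'$ are disjoint, $t_C$ and $t_{C'}$ commute, so $\varphi^d=t_C^d\,t_{C'}^{-d}$. In any abelianization the class of a $d$-th power equals $d$ times the class, and $\iota([\varphi])$ is just the class of $\varphi$ in $H_1(\mathcal{M}_{g,r}[d];\mathbf{Z})$; hence $d\,\iota([\varphi])=[\varphi^d]=[t_C^d]-[t_{C'}^d]$. Here $t_C^d,t_{C'}^d\in\mathcal{M}_{g,r}[d]$ because $\rho(t_C)^d=T_{[C]}^d\equiv I \mod d$, so these classes are defined.

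The crux is then that $[t_C^d]$ and $[t_{C'}^d]$ agree. Since $[C]=[C']$ integrally, a fortiori $[C]=[C']\in H_1(\Sigma_{g,r};\mathbf{Z}_d)/\{\pm1\}$, so Lemma \ref{welldefined of phi} provides an $f\in\mathcal{M}_{g,r}[d]$ with $f(C)=C'$. Then $t_{C'}^d=f\,t_C^d\,f^{-1}$, and conjugate elements have the same image in the abelianization $H_1(\mathcal{M}_{g,r}[d];\mathbf{Z})$, whence $[t_{C'}^d]=[t_C^d]$ and therefore $d\,\iota([\varphi])=0$. Together with the reduction of the first paragraph this proves $d\,\iota=0$ and hence the factorization.

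I expect the only substantive input to be Johnson's generation theorem, which is exactly where the hypothesis $g\ge3$ enters; everything else is elementary, namely the commutation $\varphi^d=t_C^d t_{C'}^{-d}$ and the transport of $C$ to $C'$ inside the level $d$ group supplied by Lemma \ref{welldefined of phi}. The one point needing care is orientations: one must use the bounding pair convention $[C]=[C']$ so that the two homology classes genuinely coincide, but since a Dehn twist is insensitive to the orientation of its curve and Lemma \ref{welldefined of phi} only requires equality up to sign mod $d$, no obstruction arises. (Note that this argument uses nothing about the parity of $d$, so the lemma in fact holds for every $d\ge2$.)
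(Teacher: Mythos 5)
Your proof is correct, and it shares the paper's overall skeleton: both arguments use Johnson's theorem that $\mathcal{I}_{g,r}$ is generated by bounding pair maps for $g\ge3$ (this is exactly where the hypothesis enters) to reduce the factorization to the vanishing of $d[\varphi]\in H_1(\mathcal{M}_{g,r}[d];\mathbf{Z})$ for a single bounding pair map $\varphi=t_Ct_{C'}^{-1}$. The difference is how that vanishing is obtained. The paper quotes Johnson's conjugacy result (\cite{johnson1980crs}, Lemma 11), which asserts $(t_Ct_{C'}^{-1})^d\in[\mathcal{M}_{g,r}[d],\mathcal{I}_{g,r}]$, as a black box. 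You instead rederive what is needed from ingredients internal to the paper: disjointness gives $\varphi^d=t_C^d t_{C'}^{-d}$, and Lemma \ref{welldefined of phi} (applicable since both curves of a bounding pair are non-separating and integrally homologous, hence equal in $H_1(\Sigma_{g,r};\mathbf{Z}_d)/\{\pm1\}$) supplies $f\in\mathcal{M}_{g,r}[d]$ with $f(C)=C'$, whence $t_{C'}^{-d}=ft_C^{-d}f^{-1}$ and $\varphi^d=t_C^d f t_C^{-d}f^{-1}$ is a commutator in $\mathcal{M}_{g,r}[d]$, so it dies in the abelianization. This proves slightly less than the cited lemma (membership in $[\mathcal{M}_{g,r}[d],\mathcal{M}_{g,r}[d]]$ rather than in the smaller $[\mathcal{M}_{g,r}[d],\mathcal{I}_{g,r}]$), but it is exactly enough for Lemma \ref{moddred}, and what it buys is self-containedness: the external citation is replaced by a lemma the paper has already proved, at the modest cost of a few extra lines. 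Your closing remark is also consistent with the paper: neither argument uses the parity of $d$, so the lemma holds for every $d\ge2$; oddness of $d$ is needed only later, e.g.\ to make $\tau_d$ vanish on $d$-th powers of Dehn twists.
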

\begin{proof}
For any pair of simple closed curves $C_1$, $C'_1$ which bounds a subsurface of genus 1 in $\Sigma_{g,1}$, the mapping class $t_{C_1}t_{C'_1}^{-1}$ is in Torelli group $\mathcal{I}_{g,1}$. Johnson\cite{johnson1979hsa} showed that $\mathcal{I}_{g,1}$ is generated by all pairs of twists $t_{C_1}t_{C'_1}^{-1}$, for $g\ge3$ and such an bounding pair $C_1$, $C'_1$. In particular, $\mathcal{I}_g$ is also generated by pairs of twists as above.
Johnson (\cite{johnson1980crs} Lemma 11) also shows that any pair of simple closed curves $C_2$, $C_2'$ which bounds a subsurface in $\Sigma_{g,r}$ satisfies $(t_{C_2} t_{C_2'}^{-1})^d\in [\mathcal{M}_{g,r}[d], \mathcal{I}_{g,r}]$. 

Therefore for $\varphi\in\mathcal{I}_{g,r}$, we have $[\varphi^d]=0\in H_1(\mathcal{M}_{g,r}[d];\mathbf{Z})$ for $r=0,1$. This proves the lemma. 
\end{proof}
We have already determines the abelianization of level $d$ congruence subgroup of the symplectic group in Section \ref{section:abel-symp}. We will construct the splitting of 
\[
H_1(\mathcal{I}_{g,r};\mathbf{Z})\otimes\mathbf{Z}_d\to H_1(\mathcal{M}_{g,r}[d];\mathbf{Z})\to H_1(\Gamma_g[d];\mathbf{Z})\to 0
\]
for $r=0,1$, and prove Theorem \ref{abel-oddlevel} in the next subsection. 
\subsection{Johnson homomorphism $\mod d$}\label{Johnsonmap}
In this subsection, we state that the $\mod d$ reduction of the Johnson homomorphism can be defined on the level $d$ mapping class group.

For $n\ge2$, we denote by $F_n$ the free group of rank $n$, and by $H:=F_n/[F_n,F_n]$ the abelianization of $F_n$. Let $\Aut(F_n)$ be the automorphism group of the free group $F_n$. Then, $\Aut F_n$ acts on $H$. For a commutative ring $R$ with unit element, denote the tensor algebra of $H\otimes R$ by
\[\hat{T}:=\prod_{m=0}^{\infty} H^{\otimes m}\otimes R. \]
We denote $\hat{T}_i:=\prod_{m\ge i}H^{\otimes i}\otimes  R$ for $i\ge1$.
\begin{definition}
The map $\theta: F_n\to 1+\hat{T}_1$ is called $R$-valued Magnus expansion of $F_n$ if $\theta:F_n\to1+\hat{T}_1$ is a group homomorphism, and for any $\gamma\in F_n$, $\theta$ satisfies
\[\theta(\gamma)\equiv 1+[\gamma]\ (\mod \hat{T}_2).\]
\end{definition}
In detail, see Kawazumi\cite{kawazumi2005cam} Section 1 and Bourbaki\cite{bourbaki1972gal} Ch.2, \S5, no.4, 5.  In the following, we put $R:=\mathbf{Z}_d$ for an odd integer $d$. We denote by $\theta_m:F_n\to H^{\otimes m}\otimes\mathbf{Z}_d$ the $m$-th component of $\theta$. Denote the kernel
\[\Gamma_2^d:=\Ker(F_n\to H\otimes\mathbf{Z}_d),\]
then the restriction of $\theta_2$ to $\Gamma_2^d\to H^{\otimes 2}\otimes \mathbf{Z}_d$ is a homomorphism. For $a,b\in F_n$, denote by $A,B\in H_1(F_n;\mathbf{Z})$ the homology classes. Then, we have
\begin{align*}
\theta_2(aba^{-1}b^{-1})&=A\otimes B-B\otimes A,\\
\theta_2(a^d)&=\frac{d(d-1)}{2}A\otimes A=0.
\end{align*}
Hence we obtain
\[
\theta_2(\Gamma_2^d)=\Lambda^2H\otimes\mathbf{Z}_d.
\]
From the above calculation, we see that $\theta_2|_{\Gamma_2^d}$ is $\Aut F_n$-equivariant. Define the level $d$ IA-automorphism group by $IA_n[d]:=\Ker(\Aut F_n\to GL(n;\mathbf{Z}_d))$. For $H^*:=\Hom(H,\mathbf{Z})$, define the $\mod d$ Johnson homomorphism by
\[
\begin{array}{ccccc}
\tau_d:& IA_n[d]&\to&\Hom(H, \Lambda^2 H\otimes\mathbf{Z}_d)&\cong H^*\otimes\Lambda^2H\otimes\mathbf{Z}_d.\\
&\varphi&\mapsto &([x]\to\theta_2(x^{-1}\varphi(x)))&
\end{array}
\]
Then, we see that $\tau_d$ is an $\Aut(F_n)$-equivariant homomorphism, as in
Johnson \cite{johnson1980aqm} Lemmas 2C and 2D, Kawazumi \cite{kawazumi2005cam} section 3.

Next, we state that we can define the $\mod d$ Johnson homomorphism on the level $d$ mapping class group. Choose symplectic generators $\{a_i, b_i\}_{i=1}^g$ of $\pi_1(\Sigma_{g,1}, *)$ ($*\in \partial\Sigma_{g,1})$ which represent the symplectic basis $\{A_i, B_i\}$. Then we have the isomorphism $\pi_1(\Sigma_{g,1}, *)\cong F_{2g}$, and $H\cong H_1(\Sigma_{g,1}; \mathbf{Z})$. The action of $\mathcal{M}_{g,1}[d]$ on the fundamental group of the surface induces the homomorphism $\mathcal{M}_{g,1}[d]\to IA_n[d]$. Hence we have the homomorphism
\[
\tau_d: \mathcal{M}_{g,1}[d]\to H^*\otimes\Lambda^2H\otimes\mathbf{Z}_d\cong H\otimes\Lambda^2H\otimes\mathbf{Z}_d
\]
which is independent of the choice of the generators of $\pi_1(\Sigma_{g,1})$. 
Note that by the Poincar\'{e} duality, we have
\[
H^*\otimes\Lambda^2H\otimes\mathbf{Z}_d\cong H\otimes\Lambda^2H\otimes\mathbf{Z}_d.
\]
It is easy to see that the restriction of $\tau_d$ to $\mathcal{I}_{g,1}$ is equal to the $\mod d$ reduction of the Johnson homomorphism. Now, we calculate the image of the Johnson homomorphism on the level $d$ mapping class group.
\begin{lemma}
For $g\ge3$,
\[
\tau_d(\mathcal{M}_{g,1}[d])\subset \Lambda^3H\otimes \mathbf{Z}_d.
\]
\end{lemma}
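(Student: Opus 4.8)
The plan is to reduce the statement to a purely Lie-algebraic condition coming from the fact that a mapping class of a bordered surface fixes the boundary. Recall that $\mathcal{M}_{g,1}[d]$ is represented by diffeomorphisms fixing $\partial\Sigma_{g,1}$ pointwise and a basepoint $*\in\partial\Sigma_{g,1}$, so the induced automorphism $\varphi_*\in\Aut(F_{2g})$ fixes the boundary word $\zeta:=\prod_{i=1}^g[a_i,b_i]\in\pi_1(\Sigma_{g,1},*)$ exactly, not merely up to conjugacy. This is the single extra input, beyond the already-established facts that $\tau_d$ is a well-defined $\Aut(F_{2g})$-equivariant homomorphism and that its restriction to $\mathcal{I}_{g,1}$ is the $\mod d$ reduction of the classical Johnson homomorphism (whose image lies in $\Lambda^3H$). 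Thus I would phrase the goal as: the relation $\varphi_*(\zeta)=\zeta$ forces $\tau_d(\varphi)$ into the subspace of \emph{symplectic} degree-one derivations, which is exactly $\Lambda^3H\otimes\mathbf{Z}_d$.

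The main computation is to extract this condition from $\varphi_*(\zeta)=\zeta$ through the Magnus expansion $\theta$. Writing $u_i:=a_i^{-1}\varphi(a_i)$ and $v_i:=b_i^{-1}\varphi(b_i)$, which lie in $\Gamma_2^d$, we have by definition $\theta_2(u_i)=\tau_d(\varphi)(A_i)$ and $\theta_2(v_i)=\tau_d(\varphi)(B_i)$ in $\Lambda^2H\otimes\mathbf{Z}_d$, while $\theta_1(u_i)\equiv\theta_1(v_i)\equiv 0$. Substituting $\varphi(a_i)=a_iu_i$, $\varphi(b_i)=b_iv_i$ into $\prod_i[\varphi(a_i),\varphi(b_i)]$ and expanding $\theta$ through degree $3$, the degree-$2$ parts reproduce $\theta_2(\zeta)=\sum_i(A_i\otimes B_i-B_i\otimes A_i)$ and cancel. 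In degree $3$ the terms purely from $\zeta$ again cancel against $\theta_3(\zeta)$, and the surviving part is linear in the $\theta_2(u_i),\theta_2(v_i)$, yielding the relation
\[
\sum_{i=1}^g\Bigl([\tau_d(\varphi)(A_i),B_i]+[A_i,\tau_d(\varphi)(B_i)]\Bigr)=0\in\mathcal{L}_3\otimes\mathbf{Z}_d,
\]
where $\mathcal{L}_3$ is the degree-$3$ component of the free Lie algebra on $H$. Here I would use crucially that $d$ is odd: the self-commutator contributions $\theta_2(a_i^d)=\tfrac{d(d-1)}{2}A_i\otimes A_i$ vanish $\mod d$, exactly as recorded in the excerpt, so no anomalous diagonal terms appear.

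Finally I would identify the kernel of the map $\delta\colon \Hom(H,\Lambda^2H)\to\mathcal{L}_3$, $f\mapsto\sum_i([f(A_i),B_i]+[A_i,f(B_i)])$, with $\Lambda^3H$, via the standard embedding $a\wedge b\wedge c\mapsto(x\mapsto \langle x,a\rangle\,b\wedge c+\langle x,b\rangle\,c\wedge a+\langle x,c\rangle\,a\wedge b)$; this is the classical computation of the degree-one part of the Lie algebra of symplectic derivations, $\mathfrak{h}_{g,1}(1)\cong\Lambda^3H$. Since the inclusion $\Lambda^3H\otimes\mathbf{Z}_d\subseteq\Ker(\delta\otimes\mathbf{Z}_d)$ is immediate, the content is the reverse inclusion after reduction $\mod d$, and the combination of the displayed relation with this identification gives $\tau_d(\varphi)\in\Lambda^3H\otimes\mathbf{Z}_d$. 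I expect the genuine obstacle to be this last step: one must check that $\Ker(\delta\otimes\mathbf{Z}_d)=\Lambda^3H\otimes\mathbf{Z}_d$, i.e. that $\Lambda^3H$ is a direct summand of $\Ker\delta$ over the integers or that the relevant cokernel has only torsion coprime to the odd integer $d$. Verifying the absence of odd-order torsion in $\Hom(H,\Lambda^2H)/\Lambda^3H$ (for $g\ge 3$), together with the careful degree-$3$ Magnus bookkeeping, is where the real work lies; the equivariance and the boundary relation are comparatively formal.
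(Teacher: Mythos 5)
Your proposal is correct in substance, but it takes a genuinely different route from the paper. The paper's proof is a short generator argument: by Mennicke's theorem, $\mathcal{M}_{g,1}[d]$ is generated by the Torelli group $\mathcal{I}_{g,1}$ together with the $d$-th powers of Dehn twists along nonseparating curves (the subgroup they generate is normal and its symplectic image contains $T_{A_1}^d$, hence all of $\Gamma_g[d]$); on $\mathcal{I}_{g,1}$ the map $\tau_d$ is the mod $d$ reduction of the classical Johnson homomorphism, whose image lies in $\Lambda^3H$, while a one-line computation gives $\tau_d(t_{C_1}^d)=\frac{d(d-1)}{2}B_1\otimes B_1\otimes B_1=0$ because $d$ is odd, and equivariance together with the $\Sp$-invariance of $\Lambda^3H\otimes\mathbf{Z}_d$ finishes. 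Your argument instead derives the containment intrinsically from $\varphi_*(\zeta)=\zeta$, i.e.\ it redoes Morita's proof of the classical fact $\tau(\mathcal{I}_{g,1})\subset\Lambda^3H$ with $\mathbf{Z}_d$ coefficients, and both of the steps you flag as the real work do go through. For the degree-3 bookkeeping: since $\tau_d$ is independent of the choice of Magnus expansion you may take the standard one, $\theta(a_i)=1+A_i$; over $\mathbf{Z}_d$ every term involving $\theta_1(u_i)$ or $\theta_1(v_i)$ vanishes identically, each commutator factor $[a_iu_i,b_iv_i]$ has trivial degree-1 part so no cross-handle terms survive, and the terms linear in $\theta_2(u_i),\theta_2(v_i)$ assemble to exactly your displayed relation $\sum_i([\tau_d(\varphi)(A_i),B_i]+[A_i,\tau_d(\varphi)(B_i)])=0$. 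For the kernel identification: under Poincar\'e duality your $\delta$ becomes the bracket map $H\otimes\Lambda^2H\to\mathcal{L}_3$, whose kernel over $\mathbf{Z}$ is precisely the Jacobi image of $\Lambda^3H$; since the cokernel $\mathcal{L}_3$ is the degree-3 component of the free Lie ring on $H$, hence a free $\mathbf{Z}$-module, reduction mod $d$ preserves exactness and $\Ker(\delta\otimes\mathbf{Z}_d)=\Lambda^3H\otimes\mathbf{Z}_d$ for \emph{every} $d$ — your worry about odd torsion in $\Hom(H,\Lambda^2H)/\Lambda^3H$ evaporates, as that quotient is torsion-free; the oddness of $d$ enters only where the paper already used it, namely to make $\theta_2(a^d)=\frac{d(d-1)}{2}A\otimes A$ vanish so that $\tau_d$ is well defined with values in $\Hom(H,\Lambda^2H\otimes\mathbf{Z}_d)$. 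As for what each approach buys: the paper's route is a few lines once Mennicke's theorem and Johnson's computation are available (both already used elsewhere in the paper); yours is longer but self-contained, avoiding both external inputs (it reproves the Torelli containment as a special case) and explaining conceptually why $\Lambda^3H$ appears — it is the degree-one part $\mathfrak{h}_{g,1}(1)$ of the symplectic derivation Lie algebra, i.e.\ the kernel of evaluation on the symplectic class.
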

\begin{proof}
By the Theorem \ref{Mennicke}, $\mathcal{M}_{g,1}[d]$ is generated by the $d$ times Dehn twists along all non-separating curves and the Torelli group $\mathcal{I}_{g,1}$. For the simple closed curve $C_1$ as shown in Figure \ref{fig:curve2.eps}, we have 
\[
\tau_d(t_{C_1}^d)=\frac{d(d-1)}{2}B_1\otimes B_1\otimes B_1=0,
\]
because $d$ is odd. Since $\tau_d|_{\mathcal{I}_{g,1}}$ is equal to the $\mod d$ reduction of the Johnson homomorphism, we also have $\tau_d(\mathcal{I}_{g,1})\subset \Lambda^3H\otimes \mathbf{Z}_d$.
\end{proof}
Next, We will define the Johnson homomorphism $\tau_d$ for closed surfaces.
\begin{lemma}
We consider $\Sigma_{g,1}$ as a subsurface of $\Sigma_g$. By gluing each mapping class on $\Sigma_{g,1}$ with identity on the disk, we have the surjective homomorphism $\mathcal{M}_{g,1}[d]\to\mathcal{M}_g[d]$. Then, for $g\ge3$, the homomorphism
\[\tau_d:\mathcal{M}_g[d]\to \Lambda^3H/H\otimes\mathbf{Z}_d\]
is well-defined. 
\end{lemma}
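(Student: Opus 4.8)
The plan is to identify the kernel $K$ of the surjection $\mathcal{M}_{g,1}[d]\to\mathcal{M}_g[d]$ and to show that $\tau_d$ carries $K$ into the submodule $H\otimes\mathbf{Z}_d$ (embedded in $\Lambda^3H\otimes\mathbf{Z}_d$ by $x\mapsto\sum_{i=1}^g(A_i\wedge B_i)\wedge x$), so that $\tau_d$ descends to the quotient $\Lambda^3H/H\otimes\mathbf{Z}_d$. First I would describe $K$. Since capping the boundary disk identifies $H_1(\Sigma_{g,1};\mathbf{Z})$ with $H_1(\Sigma_g;\mathbf{Z})$, the map $\mathcal{M}_{g,1}\to\mathcal{M}_g$ is compatible with the symplectic representations, whence $K=\Ker(\mathcal{M}_{g,1}\to\mathcal{M}_g)\cap\mathcal{M}_{g,1}[d]$. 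By the disk-pushing Birman exact sequence, $\Ker(\mathcal{M}_{g,1}\to\mathcal{M}_g)$ is the disk-pushing subgroup $\pi_1(UT\Sigma_g)$, a central extension of $\pi_1(\Sigma_g)$ by $\mathbf{Z}=\br{t_\partial}$, where $t_\partial$ is the Dehn twist along $\partial\Sigma_{g,1}$; it is generated by $t_\partial$ together with the point-pushing maps along loops $\gamma$.

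The next step is to check that this whole subgroup lies in $\mathcal{I}_{g,1}$. Writing a point-push along $\gamma$ as $t_{\gamma_1}t_{\gamma_2}^{-1}$ for the two boundary curves $\gamma_1,\gamma_2$ of an annular neighborhood of $\gamma$, we have $[\gamma_1]=[\gamma_2]=[\gamma]$, so the transvection formula $T_y(x)=x+(y\cdot x)y$ together with $[\gamma]\cdot[\gamma]=0$ shows that $t_{\gamma_1}t_{\gamma_2}^{-1}$ acts trivially on $H_1(\Sigma_{g,1};\mathbf{Z})$; the separating twist $t_\partial$ acts trivially as well. Hence $\Ker(\mathcal{M}_{g,1}\to\mathcal{M}_g)\subset\mathcal{I}_{g,1}\subset\mathcal{M}_{g,1}[d]$, and therefore $K$ is the \emph{entire} disk-pushing subgroup, not merely an intersection.

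Because $K\subset\mathcal{I}_{g,1}$, the restriction $\tau_d|_K$ is the $\bmod\,d$ reduction of the classical Johnson homomorphism $\tau\colon\mathcal{I}_{g,1}\to\Lambda^3H$. I would then invoke Johnson's computation of $\tau$ on the disk-pushing subgroup: one has $\tau(t_\partial)=0$ (a separating twist lies in the Johnson kernel, consistent with $t_\partial$ generating the central $\mathbf{Z}$ that maps to $0$ in $H$), while a point-push along $\gamma$ satisfies $\tau(t_{\gamma_1}t_{\gamma_2}^{-1})=\bigl(\sum_{i=1}^g A_i\wedge B_i\bigr)\wedge[\gamma]$. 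Thus $\tau(K)$ is exactly the image of the map $H\to\Lambda^3H$, $x\mapsto\sum_{i=1}^g(A_i\wedge B_i)\wedge x$, that defines $\Lambda^3H/H$, and reducing modulo $d$ gives $\tau_d(K)\subseteq H\otimes\mathbf{Z}_d$. Consequently any two lifts to $\mathcal{M}_{g,1}[d]$ of a given $\varphi\in\mathcal{M}_g[d]$ differ by an element of $K$ and so have the same image in $\Lambda^3H/H\otimes\mathbf{Z}_d$; setting $\tau_d(\varphi):=\tau_d(\tilde\varphi)\bmod H$ for any lift $\tilde\varphi$ then yields a well-defined homomorphism $\tau_d\colon\mathcal{M}_g[d]\to\Lambda^3H/H\otimes\mathbf{Z}_d$.

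The main obstacle is the third step: verifying that the disk-pushing subgroup maps precisely into $H$ under the Johnson homomorphism, i.e. that a point-push along $\gamma$ is sent to $\bigl(\sum_i A_i\wedge B_i\bigr)\wedge[\gamma]$. This is essentially Johnson's original calculation, but the delicate point one must not skip is the observation of the second step, namely that the \emph{full} kernel $K$—including $t_\partial$ and not only the point-pushes—already lies in $\mathcal{I}_{g,1}$, so that the $\bmod\,d$ Johnson homomorphism applies to every element of $K$ verbatim.
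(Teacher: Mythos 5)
Your proposal is correct and follows essentially the same route as the paper: identify the kernel of $\mathcal{M}_{g,1}[d]\to\mathcal{M}_g[d]$ as the disk-pushing subgroup generated by the boundary twist $t_{\partial\Sigma_{g,1}}$ and the push maps (which are exactly the paper's genus $g-1$ bounding pairs $T_CT_{C'}^{-1}$, cited from Birman), then apply Johnson's computations ($\tau(t_\partial)=0$ and $\tau$ of a push equals $\bigl(\sum_{i=1}^g A_i\wedge B_i\bigr)\wedge[\gamma]\in H\subset\Lambda^3H$) to conclude the composite with the projection to $\Lambda^3H/H\otimes\mathbf{Z}_d$ kills the kernel. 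Your extra verification that the full kernel lies in $\mathcal{I}_{g,1}$, and your explicit push formula (which renders the paper's appeal to $\Sp(2g;\mathbf{Z})$-invariance of $H\subset\Lambda^3H$ unnecessary), are sound refinements rather than a different argument.
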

\begin{proof}
It is known that $\Ker(\mathcal{M}_{g,1}[d]\to\mathcal{M}_g[d])$ is generated by twisting pair $T_CT_{C'}^{-1}$ and separating twist $T_{\partial \Sigma_{g,1}}$, where $(C,C')$ be a pair which bounds subsurface of genus $g-1$ (see Birman\cite{birman1975bla} pp156-160). By the result of Johnson \cite{johnson1980aqm} Lemmas 4A and 4B, we have $\tau_d(T_{\partial \Sigma_{g,1}})=0$, and $\tau_d(T_C T_{C'}^{-1})\in H\subset \Lambda^3H$. Since $H\subset \Lambda^3H$ is a $Sp(2g;\mathbf{Z})$-invariant subspace, we see that $\tau_d$ of the closed surface is well-defined. 
\end{proof}
We prove Theorem \ref{abel-oddlevel} using the homomorphism defined as above. 
\begin{proof}[proof of Theorem \ref{abel-oddlevel}]
Consider the homomorphism 
\begin{gather*}
\tau_d: \mathcal{M}_{g,1}[d]\to\Lambda^3H\otimes\mathbf{Z}_d,\\
\tau_d: \mathcal{M}_g[d]\to\Lambda^3H/H\otimes\mathbf{Z}_d,
\end{gather*}
defined in Lemma \ref{moddred}. By the structure of the abelianization determined in Johnson\cite{johnson1985stg} Theorems 3 and 6, $\tau_d$ induces the isomorphism
\begin{gather*}
H_1(\mathcal{I}_{g,1};\mathbf{Z})\otimes\mathbf{Z}_d\cong \Lambda^3H\otimes\mathbf{Z}_d,\\ 
H_1(\mathcal{I}_g;\mathbf{Z})\otimes\mathbf{Z}_d\cong \Lambda^3H/H\otimes\mathbf{Z}_d
\end{gather*}
when $d$ is odd. Hence, we have the splitting of the exact sequence
\[
H_1(\mathcal{I}_{g,r};\mathbf{Z})\otimes\mathbf{Z}_d\to H_1(\mathcal{M}_{g,r}[d];\mathbf{Z})\to H_1(\Gamma_g[d];\mathbf{Z})\to 0\ (r=0,1),
\]
by the homomorphism $\tau_d$. This shows that
\[
H_1(\mathcal{M}_{g,r}[d];\mathbf{Z})=
\begin{cases}
\Lambda^3H\oplus H_1(\Gamma_g[d];\mathbf{Z}), \text{ when }r=1\\
\Lambda^3H/H\oplus H_1(\Gamma_g[d];\mathbf{Z}), \text{ when }r=0
\end{cases}
\]
This proves the theorem.
\end{proof}
\nocite{bass1967scs}
\bibliographystyle{amsplain}
\providecommand{\bysame}{\leavevmode\hbox to3em{\hrulefill}\thinspace}
\providecommand{\MR}{\relax\ifhmode\unskip\space\fi MR }
\providecommand{\MRhref}[2]{%
  \href{http://www.ams.org/mathscinet-getitem?mr=#1}{#2}
}
\providecommand{\href}[2]{#2}

\begin{flushleft}
Masatoshi Sato\\
Graduate School of Mathematical Sciences,\\
The University of Tokyo,\\
3-8-1 Komaba Meguro-ku Tokyo 153-0041, Japan\\
E-mail: \texttt{sato@ms.u-tokyo.ac.jp}
\end{flushleft}
\end{document}